\documentclass[oneside,11pt]{amsart}

\usepackage[english]{babel}
\usepackage{amsmath}
\usepackage{amssymb}
\usepackage{amsthm}
\usepackage{latexsym}
\usepackage{mathtools}
\usepackage{etexcmds} % TeX Live 2019 needs this for thmtools to work
\usepackage{thmtools}
\usepackage{amsfonts}
\usepackage{mathrsfs}
\usepackage{textcomp}
\usepackage[T1]{fontenc}
\usepackage{graphicx}
\usepackage{setspace}
\usepackage{nicefrac}
\usepackage{indentfirst}
\usepackage{enumerate}
\usepackage{wasysym}
\usepackage[normalem]{ulem}
\usepackage{upgreek}
\usepackage[pdfpagelabels,hyperindex=false]{hyperref}
\usepackage{paralist}
\usepackage{xcolor}
\hypersetup{
    colorlinks,
    linkcolor={red!50!black},
    citecolor={green!50!black},
    urlcolor={blue!80!black}
}
\usepackage{etoolbox}
\usepackage{mathdots}
\usepackage{wrapfig}
\usepackage{floatflt}
\usepackage{tensor} 
\usepackage{parskip}
\usepackage{lscape}
\usepackage{stmaryrd}
\usepackage{ytableau}
\usepackage{pifont}

\usepackage{tikz}
\usetikzlibrary{arrows,matrix}
\usetikzlibrary{positioning}
\usetikzlibrary{decorations}

\usepackage[all,cmtip]{xy}
\usepackage[labelfont=bf, font={small}]{caption}[2005/07/16]

\usepackage[top = 3cm, bottom = 3cm, left=3cm, right=3cm]{geometry}

\makeatletter
\@namedef{subjclassname@2020}{%
  \textup{2020} Mathematics Subject Classification}
\makeatother
\subjclass[2020]{14N07, 15A69}
\keywords{tensor rank, border rank, matrix multiplication}

\newcommand{\deggeq}{\trianglerighteq}
\newcommand{\degleq}{\trianglelefteq}

\newcommand{\Mamu}{\mathbf{MaMu}}
\newcommand{\EMamu}{\mathbf{EMaMu}}
\newcommand{\multiEMamu}{\mathbf{multiEMaMu}}
\newcommand{\Dome}{\mathbf{Dome}}

\newcommand{\triv}{\mathrm{triv}}
\DeclareMathOperator{\GL}{\mathrm{GL}}
\newcommand{\vvirg}{,\dots,}
\newcommand{\ootimes}{\otimes \cdots \otimes }
\newcommand{\ttimes}{\times \cdots \times }
\newcommand{\textsum}{{\textstyle \sum}}
\newcommand{\textbinom}[2]{{\textstyle \binom{#1}{#2}}}
\newcommand{\textfrac}[2]{{\textstyle \frac{#1}{#2}}}
\newcommand{\calZ}{\mathcal{Z}}
\newcommand{\calO}{\mathcal{O}}
\newcommand{\bbC}{\mathbb{C}}
\newcommand{\bbN}{\mathbb{N}}
\newcommand{\bfu}{\mathbf{u}}

\newcommand{\bfj}{\mathbf{j}}
\newcommand{\bfk}{\mathbf{k}}
\newcommand{\bfn}{\mathbf{n}}
\newcommand{\bfZ}{\mathbf{Z}}
\newcommand{\bigboxtimes}{{\scalebox{1.2}{$\boxtimes$}}}
\newcommand{\dotitem}{\item[$\cdot$]}

\renewcommand{\bar}[1]{\overline{#1}}

\newtheorem{theorem}{Theorem}[section]

\newtheorem{lemma}[theorem]{Lemma}
\newtheorem{proposition}[theorem]{Proposition}
 \newtheorem{claim}{Claim}
 
\theoremstyle{definition}

\newtheorem{example}[theorem]{Example}

\theoremstyle{remark}

\newcommand{\rmR}{\mathrm{R}}
\newcommand{\uR}{\underline{\mathrm{R}}}
\renewcommand{\phi}{\varphi}
\newcommand{\eps}{\varepsilon}
\newcommand{\aR}{\uwave{\rmR}}

\title[Border rank non-additivity]{Border rank non-additivity for higher order tensors}

\author[M. Christandl, F. Gesmundo, M. Micha{\l}ek, J. Zuiddam]{Matthias Christandl, Fulvio Gesmundo, Mateusz Micha{\l}ek, Jeroen Zuiddam}

\address[M. Christandl]{QMATH, Dept. Math. Sciences, University of Copenhagen, Universitetsparken 5, 2100 Copenhagen O., Denmark}
\email[M. Christandl]{christandl@math.ku.dk}

\address[F. Gesmundo]{QMATH, Dept. Math. Sciences, University of Copenhagen, Universitetsparken 5, 2100 Copenhagen O., Denmark -- (current) Max Planck Institute for Mathematics in the Sciences, Inselstrasse 22, 04103 Leipzig, Germany}
\email[F. Gesmundo]{fulvio.gesmundo@mis.mpg.de}

\address[M. Micha{\l}ek]{Max Planck Institute for Mathematics in the Sciences, Inselstrasse 22, 04103 Leipzig, Germany -- (current) Department of Mathematics and Statistics, University of Konstanz, Konstanz, Germany}
\email[M. Micha{\l}ek]{mateusz.michalek@uni-konstanz.de}

\address[J. Zuiddam]{Institute for Advanced Study, 1 Einstein Drive, Princeton, NJ -- (current) Courant Institute of Mathematical Sciences, New York University, 251 Mercer Street, 10012-1185, New York, NY, USA}
\email[J. Zuiddam]{jzuiddam@nyu.edu}

\begin{document}
\begin{abstract}
Whereas matrix rank is additive under direct sum, in 1981 Sch\"onhage showed that one of its generalizations to the tensor setting, tensor border rank, can be strictly subadditive for tensors of order three. Whether border rank is additive for higher order tensors has remained open. In this work, we settle this problem by providing analogues of Sch\"onhage's construction for tensors of order four and higher. Sch\"onhage's work was motivated by the study of the computational complexity of matrix multiplication; we discuss implications of our results for the asymptotic rank of higher order generalizations of the matrix multiplication tensor. 
\end{abstract}
 \maketitle
\section{Introduction}

Let $V_1 \vvirg V_k$ be finite dimensional complex vector spaces and let $T \in V_1 \ootimes V_k$ be a tensor. The \emph{tensor rank} of $T$ is defined as
\[
 \rmR(T) = \min \left\{ r: T = \textsum_{i=1}^r v_1^{(i)} \ootimes v_k^{(i)}  \text{ for some } v_j^{(i)} \in V_j \right\}.
\]
Tensor rank generalizes matrix rank: indeed, if $k=2$, the tensor rank of $T \in V_1 \otimes V_2$ coincides with the rank of the corresponding linear map $T: V_1^* \to V_2$.

The \emph{tensor border rank} (or simply \emph{border rank}) of $T$ is defined as
\[
 \uR(T) = \min \left\{ r : T = \lim_{\eps \to 0} T_\eps \text{ with } \rmR(T_\eps) = r \text{ for } \eps\neq 0 \right\},
\]
where the limit is taken in the Euclidean topology of $V_1 \ootimes V_k$. One immediately has $\uR(T) \leq \rmR(T)$; for $k \geq 3$, there are examples where the inequality is strict.

The study of geometric properties of tensor rank and border rank has a long history dating back to more than a century ago \cite{Sylv:PrinciplesCalculusForms}. In the last decades, tensor rank was studied in the case of tensors of order three in connection with the computational complexity of matrix multiplication \cite{Strassen:Gauss_elimination_not_optimal,Strassen:RanksOptimalComputation} and, more recently, in the higher order setting, in connection with the circuit complexity of certain families of polynomials \cite{Raz:TensorRankArithmeticFormulas}. In quantum information theory, tensor rank is used as a measure of entanglement in a quantum system \cite{YuChiGuoDu:TensorRankTripartiteW,DurVidCir:TrheeCubitsEntTwoIneqWays}. The notion of border rank is more geometric as it corresponds to membership into secant varieties of Segre varieties, objects that have been studied in algebraic geometry since the early twentieth century \cite{Terr:seganti}. It is known that asymptotic behaviors of tensor rank and tensor border rank of a given tensor are equivalent. In particular, upper bounds on border rank can be converted into upper bounds on rank which hold asymptotically \cite{Bini:RelationsExactApproxBilAlg}. We refer to \cite{Lan:TensorBook,BlaserNotes} for more information on the geometry of tensor spaces and their applications.

A natural question regarding tensor rank and border rank concerns their additivity properties under direct sum. Given $T \in V_1 \ootimes V_k$ and $S \in W_1 \ootimes W_k$, let $T \oplus S$ denote their direct sum, which is a tensor in $(V_1 \oplus W_1) \ootimes (V_k \oplus W_k)$. Subadditivity of tensor rank 
\[
\rmR(T \oplus S) \leq \rmR(T) + \rmR(S) 
\]
and border rank 
\[
\uR(T \oplus S) \leq \uR(T) + \uR(S) 
\]
follows directly from the definitions. It is natural to ask whether equality holds.

For $k =3$, examples where the inequality for border rank is strict were given by Sch\"onhage in \cite{Schon:PartTotalMaMu}: this construction is reviewed in Section \ref{subsec: schon}; briefly, for every $m,n \geq 1$, Sch\"onhage provided two tensors,
\[
\begin{array}{ll}
 T \in \bbC^{m+1} \otimes \bbC^{n+1} \otimes \bbC^{(m+1)(n+1)} & \text{with } \uR(T) = (m+1)(n+1), \\
 S \in \bbC^{nm} \otimes \bbC^{nm} \otimes \bbC^1 & \text{with } \uR(S) = mn,
\end{array}
 \]
 where $\uR(T \oplus S) = (m+1)(n+1)+1$. In particular, whenever either $m \geq 2$ or $n\geq 2$, one obtains an example of strict subadditivity.
 
The additivity problem for tensor rank of third order tensors was the subject of Strassen's additivity conjecture \cite{Str:VermeidungDiv}. This conjecture stated that tensor rank additivity under direct sum always holds. A great deal of work was devoted to this problem (see, e.g., \cite{FeiWin:DirectSumConj,JaTa:ValidDirectSumConjecture,CarCatChi:ProgressSymStrassen,Teit:SuffCondStrassen}) until 2017 when Shitov gave a counterexample \cite{Shitov:StrassenCounterexample}.

A tensor of order three can be regarded as a tensor of higher order by tensoring it with a tensor product of single vectors. For instance, a tensor $T \in V_1 \otimes V_2 \otimes V_3$ can be identified with a tensor of order four $T' = T \otimes e_0 \in V_1 \ootimes V_4$, where $V_4 = \langle e_0 \rangle$ is a one-dimensional space. Na\"ively, one would expect that Sch\"onhage's and Shitov's examples generalize to higher order settings via this identification. This is not the case, and intuitively the reason is that if $T' = T \otimes e_0$ and $S' = S \otimes e_0$, then $T' \oplus S' \neq (T\oplus S) \otimes e_0$. 

The problem of nonadditivity for rank and border rank of \emph{higher order} tensors is therefore open to our knowledge.

In this work, we settle the question for the case of border rank by providing examples of strict subadditivity for tensors of order four and higher. Our constructions are largely inspired by Sch\"onhage's. 

Sch\"onhage constructed his examples in order to provide new upper bounds on the asymptotic rank of the matrix multiplication tensor and thereby upper bounds on the exponent of matrix multiplication. We review this construction in Section \ref{subsec: schon}. The two key elements are the strictly subadditive upper bound $\uR(T \oplus S) < \uR(T) + \uR(S)$ and the fact that the Kronecker product $T \boxtimes S$ is a matrix multiplication tensor. Using these two facts, Sch\"onhage determined an upper bound on the direct sum of copies of the matrix multiplication tensor, exploiting the binomial expansion of $(T \oplus S)^{\boxtimes N}$ and the upper bound on its border rank. Strict subadditivity of tensors can therefore deliver nontrivial exponent bounds. At the time, this strategy gave the best bounds for the exponent of matrix multiplication and provided a sandbox example of Strassen's laser method, which is the technique used to obtain all subsequent upper bounds on the exponent \cite{Str:RelativeBilComplMatMult,CopperWinog:MatrixMultiplicationArithmeticProgressions,Stot:ComplexityMaMu,Wil:FasterThanCW,LeGall:PowersTensorsFMM,AlmWil:RefinedLaserMethodFMM}.

In our setting, the tensors $T \boxtimes S$ will be higher order generalizations of the matrix multiplication tensor. Some of these tensors were considered in \cite{ChrZui:TensorSurgery, ChrVraZui:AsyRankGraph}, and our work provides a new approach to the study of their exponents. The bounds presented here do not improve the best known upper bounds on the exponent of these tensors. However, the new technique provides nontrivial upper bounds and the strategies presented in this paper provide new and different types of tensor decompositions that are in many ways simpler or more direct when compared to the ones providing better bounds.

The results of this work hold over arbitrary fields as long as the characteristic is ``large enough''. We will not enter into details and we will work over the complex numbers for simplicity. We refer to \cite[Sec. 15.4]{BuClSho:Alg_compl_theory} for the formal definition of border rank and the details to extend the results over arbitrary fields.

The article is structured as follows. In Section \ref{section: preliminaries}, we provide mathematical preliminaries to our study as well as a review of Sch{\"o}nhage's construction. The new examples of strict subadditivity of border rank are presented in Section \ref{section: constructions}. The consequences on the asymptotic rank of generalizations of the matrix multiplication tensor are  presented in Section \ref{section: exponent}.

\subsection*{Acknowledgements} [M.~C. and F.~G.] This work was supported by VILLUM FONDEN via the QMATH Centre of Excellence (Grant No. 10059) and the European Research Council (Grant No. 818761). [J.~Z.] This material is based upon work directly supported by the National Science Foundation Grant No. DMS-1638352 and indirectly supported by the National Science Foundation Grant No. CCF-1900460. Any opinions, findings and conclusions or recommendations expressed in this material are those of the author(s) and do not necessarily reflect the views of the National Science Foundation.

\section{Preliminaries}\label{section: preliminaries}

In this section we discuss basic notions that will be used throughout the paper.

\subsection{Flattening maps of tensors and their image}
Every tensor naturally defines a collection of linear maps, called flattening maps. We will discuss here a characterization of tensor rank and border rank in terms of the image of a flattening map.

Let $T \in V_1 \ootimes V_k$ be a tensor of order $k$. The tensor~$T$ naturally induces a linear map 
\[
T: V_j^* \to V_1 \ootimes V_{j-1}\otimes V_{j+1} \ootimes V_k
\]
for every~$j = 1 \vvirg k$. We call these linear maps the \emph{flattening maps of $T$}. We say that~$T$ is \emph{concise} if all its flattening maps are injective. Each of the flattening maps uniquely determines~$T$. In fact, the image of any of them, say $T(V_k^*) \subseteq V_1 \ootimes V_{k-1}$, already uniquely determines $T$ up to the natural action of the general linear group $\GL(V_k)$. 

The following is a characterization of tensor rank and border rank via the geometry of the subspace $T(V_k^*)$. We refer to \cite[Theorem~2.5]{BucLan:RanksTensorsAndGeneralization} and \cite[Lemma~2.4]{GesOneVen:PartiallySymComon} for the proof and additional information.

\begin{proposition}\label{prop: rank and border rank via image of flattening}
 Let $T \in V_1 \ootimes V_k$ be a tensor. Let $E = T(V_k^*)\subseteq V_1 \ootimes V_{k-1}$ be the image of the last flattening map. Then
 \begin{align*}
  \rmR(T) &= \min \left\{r : E \subseteq \langle Z_1 \vvirg Z_r \rangle,\,\text{lin.~indep.\ } Z_i \in  V_1 \ootimes V_{k-1},\,\rmR(Z_i) = 1 \right\}\\
  \uR(T) &= \min\{r : E \subseteq \lim_{\eps \to 0} \langle Z_1(\eps) \vvirg Z_r(\eps) \rangle,\, \text{lin.~indep.\ }Z_i(\eps) \in  V_1 \ootimes V_{k-1},\, \rmR(Z_i(\eps)) = 1\},
   \end{align*}
   where the limit is taken in the Grassmannian of $r$-planes in $ V_1 \ootimes V_{k-1}$.
\end{proposition}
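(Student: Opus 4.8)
The plan is to reduce the statement to the standard characterization of tensor rank and border rank via secant varieties of the Segre variety, and then to transport that characterization through the flattening isomorphism described in the previous paragraph. Recall the basic fact that for a tensor $T \in V_1 \ootimes V_k$, one has $\rmR(T) \leq r$ if and only if $T$ lies in the span of $r$ rank-one tensors, and $\uR(T) \leq r$ if and only if $T$ lies in the $r$-th secant variety $\sigma_r(\mathrm{Seg}(\mathbb{P}V_1 \ttimes \mathbb{P}V_k))$, i.e.\ $T$ is a limit of tensors of rank at most $r$. So the content of the proposition is the translation ``$T$ has rank/border rank $\leq r$'' $\iff$ ``$E = T(V_k^*)$ is contained in a span of $r$ rank-one elements of $V_1 \ootimes V_{k-1}$ (resp.\ a limit of such spans)''.

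First I would set up the correspondence carefully. Identify $T$ with the linear map $T \colon V_k^* \to V_1 \ootimes V_{k-1}$; its image is $E$, and $\dim E \le \dim V_k$. Conversely, fixing a subspace $E \subseteq V_1 \ootimes V_{k-1}$ together with a surjection $V_k^* \twoheadrightarrow E$ (equivalently, an injection $E^* \hookrightarrow V_k$, i.e.\ an inclusion of $E$ as a coordinate subspace after a choice of basis) recovers $T$ up to the action of $\GL(V_k)$, as stated in the excerpt. Since both tensor rank and border rank are invariant under $\GL(V_k)$ (indeed under $\GL(V_1) \ttimes \GL(V_k)$), all the quantities in the proposition depend only on $E$, so the statement is well posed.

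Now for the rank equality. If $T = \sum_{i=1}^r v_1^{(i)} \ootimes v_k^{(i)}$, apply each functional $\xi \in V_k^*$: $T(\xi) = \sum_{i=1}^r \xi\big(v_k^{(i)}\big)\, v_1^{(i)} \ootimes v_{k-1}^{(i)}$, so $E = T(V_k^*) \subseteq \langle Z_1 \vvirg Z_r\rangle$ with $Z_i = v_1^{(i)} \ootimes v_{k-1}^{(i)}$ rank-one in $V_1 \ootimes V_{k-1}$; discarding dependent $Z_i$ and relabeling gives the linear independence. For the converse, suppose $E \subseteq \langle Z_1 \vvirg Z_r \rangle$ with the $Z_i = v_1^{(i)} \ootimes v_{k-1}^{(i)}$ linearly independent rank-one tensors. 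Pick a basis $\xi_1 \vvirg \xi_{\dim V_k}$ of $V_k^*$ and write $T(\xi_\ell) = \sum_{i=1}^r c_{\ell i} Z_i$; then, letting $e_1 \vvirg e_{\dim V_k}$ be the dual basis of $V_k$ and setting $w^{(i)} = \sum_\ell c_{\ell i}\, e_\ell \in V_k$, one checks $T = \sum_{i=1}^r Z_i \otimes w^{(i)} = \sum_{i=1}^r v_1^{(i)} \ootimes v_{k-1}^{(i)} \otimes w^{(i)}$, a decomposition into $r$ rank-one tensors, so $\rmR(T) \le r$. Taking the minimum on both sides gives the first equality.

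For the border rank equality I would run the same argument ``with an $\eps$''. If $T = \lim_{\eps \to 0} T_\eps$ with $\rmR(T_\eps) = r$, write $T_\eps = \sum_{i=1}^r v_1^{(i)}(\eps) \ootimes v_k^{(i)}(\eps)$; applying $V_k^*$ shows $T(V_k^*) \subseteq \lim_{\eps\to 0}\langle Z_1(\eps) \vvirg Z_r(\eps)\rangle$ where $Z_i(\eps) = v_1^{(i)}(\eps)\ootimes v_{k-1}^{(i)}(\eps)$, after passing to a subsequence along which the spans converge in the Grassmannian (which is compact) and the $Z_i(\eps)$ can be chosen linearly independent for generic small $\eps$. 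Conversely, from $E \subseteq \lim_\eps \langle Z_1(\eps)\vvirg Z_r(\eps)\rangle$ with rank-one $Z_i(\eps)$, lift each basis vector $\xi_\ell$ of $V_k^*$ to a curve $T_\eps(\xi_\ell) \in \langle Z_i(\eps)\rangle$ converging to $T(\xi_\ell)$ (possible since convergence in the Grassmannian lets one choose moving bases of the planes), assemble $T_\eps = \sum_i Z_i(\eps) \otimes w^{(i)}(\eps)$ exactly as in the rank case, note $\rmR(T_\eps) \le r$, and conclude $\uR(T) \le r$; minimizing gives the second equality. The main technical point — the place I expect to spend the most care — is the limiting argument: ensuring that the Grassmannian limit of the spans can be realized by genuine rank-one curves $Z_i(\eps)$ and that the assembled $T_\eps$ actually converges to $T$ and not merely has the right limiting flattening image. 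This is handled by working on the (closed) incidence variety of flags ``$r$-plane spanned by $r$ rank-one tensors'', using its compactness/properness to extract convergent lifts, exactly as in the cited references \cite[Theorem~2.5]{BucLan:RanksTensorsAndGeneralization} and \cite[Lemma~2.4]{GesOneVen:PartiallySymComon}, to which I would defer for the routine details.
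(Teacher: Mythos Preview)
The paper does not give its own proof of this proposition: it states the result and refers to \cite[Theorem~2.5]{BucLan:RanksTensorsAndGeneralization} and \cite[Lemma~2.4]{GesOneVen:PartiallySymComon} for the argument. Your proposal is correct and is precisely the standard argument those references carry out, namely translating a rank decomposition of $T$ through the flattening $T(V_k^*)$ and, for border rank, using compactness of the Grassmannian (or properness of the relevant incidence correspondence) to pass to limits; so there is nothing to compare against beyond noting that you have supplied what the paper omits.
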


\begin{example}
 Consider the tensor $T = e_0\otimes e_0 \otimes e_1 + e_0\otimes e_1 \otimes e_0 + e_1\otimes e_0 \otimes e_0 \in \bbC^2 \otimes \bbC^2 \otimes \bbC^2$. It is known that $\rmR(T) = 3$ and $\uR(T) = 2$. Since $T$ is symmetric, the three flattening maps are equal. We have $T({\bbC^2}^*) = \langle e_1 \otimes e_0 + e_0 \otimes e_1, e_0 \otimes e_0\rangle \subseteq \bbC^2 \otimes \bbC^2$. The rank upper bound is immediate since $T({\bbC^2}^*) \subseteq \langle e_0 \otimes e_1,e_1 \otimes e_0 , e_0 \otimes e_0 \rangle$ showing $\rmR(T) \leq 3$. If $\rmR(T) \leq 2$, then $T({\bbC^2}^*)$ is spanned by two rank-one elements of $\bbC^2 \otimes \bbC^2$, but $T({\bbC^2}^*)$ only contains one rank-one element, up to scaling. This shows that $\rmR(T) = 3$. The border rank lower bound follows from the flattening lower bound: the border rank of~$T$ is at least the rank of any of the flattening maps $T : \bbC^2 \to \bbC^2 \otimes \bbC^2$, each of which equals 2. As for the border rank upper bound, let $E_\eps = \langle e_0 ^{\otimes 2}, (e_0 + \eps e_1)^{\otimes 2} \rangle$ and let $E_0 = \lim _{\eps \to 0} E_\eps$. Note that $E_0 = T({\bbC^2}^*)$. Indeed $e_0 \otimes e_0 \in E_\eps$ for every $\eps$, therefore $e_0 \otimes e_0 \in E_0$ as well. Moreover, $\frac{1}{\eps} [ (e_0 +\eps e_1)^{\otimes 2} - e_0^{\otimes 2} ] = e_0 \otimes e_1 + e_1 \otimes e_0 + \eps e_1 \otimes e_1 \in E_\eps $ for every $\eps$, so its limit as $\eps \to 0$ is an element of $E_0$. This shows that $ e_0 \otimes e_1 + e_1 \otimes e_0 \in E_0$. Hence we have the inclusion~$E_0 \subseteq T({\bbC^2}^*)$, and equality follows by dimension reasons.
\end{example}

\subsection{Degeneration, unit tensor and Kronecker product}\label{subsec:degen}
We now discuss a relation on tensors called degeneration and its connection to border rank and the asymptotic version of tensor rank.

The product group~$G = \GL(V_1) \ttimes \GL(V_k)$ naturally acts on the tensor space $V_1 \ootimes V_k$. Given two tensors $T,S \in V_1 \ootimes V_k$, we say that \emph{$S$ is a degeneration of $T$}, and write $S \degleq T$,~if 
\[
 S \in \bar{ G \cdot T }
\]
that is, $S$ belongs to the closure (equivalently in the Zariski or Euclidean topology) of the $G$-orbit of $T$. By re-embedding vector spaces in a larger common space, we may always assume that our tensors belong to the same space $V_1 \otimes \cdots \otimes V_k$. We will often tacitly identify tensors that are in the same $G$-orbit.

The notion of an identity matrix extends to $k$-tensors as follows. For $r \in \bbN$, let $V_j = \bbC^r$ and define the $k$-tensor
\[
\bfu_k(r) \coloneqq \sum_{i=1}^r e_i^{(1)} \ootimes e_i^{(k)} \in V_1 \ootimes V_k,
\]
where $e^{(j)}_1, \ldots, e^{(j)}_r$ is a fixed basis of $V_j$. The tensor~$\bfu_k(r)$ is sometimes called the rank-$r$ \emph{unit tensor}.

The fundamental relation between degeneration, unit tensors and border rank is that, for every $k$-tensor $T$ we have 
\begin{equation}\label{eqn: border rank deg}
 \uR(T) \leq r \text{ if and only if } T \degleq \bfu_k(r). 
\end{equation}

The \emph{Kronecker product} of two $k$-tensors $T \in V_1 \ootimes V_k$ and $S \in W_1 \ootimes W_k$ is the tensor $T \boxtimes S \in (V_1 \otimes W_1) \ootimes (V_k \otimes W_k)$ obtained from $T \otimes S \in V_1 \ootimes V_k \otimes W_1 \ootimes W_k$ by grouping together the spaces $V_j$ and $W_j$ for each $j$. Tensor rank and border rank are submultiplicative under the Kronecker product, that is, we have $\rmR(T \boxtimes S) \leq \rmR(T) \rmR(S)$ and $\uR(T \boxtimes S) \leq \uR(T) \uR(S)$. Both inequalities may be strict.

In the context of the study of the arithmetic complexity of matrix multiplication, Strassen introduced an asymptotic notion of tensors rank~\cite{Str:AsySpectrumTensors}, called asymptotic rank, and developed the theory of asymptotic spectra of tensors to gain a deep understanding of its properties \cite{Str:AsySpectrumTensorsExpMatMult,Str:DegComplBilMapsSomeAsySpec} (see also \cite{ChrVraZui:UniversalPtsAsySpecTensors}). The \emph{asymptotic rank} of $T \in V_1 \otimes \cdots \otimes V_k$ is defined as
\[
 \aR(T) = \lim_{N \to \infty} ( \rmR(T^{\boxtimes N}) ) ^{1/N}.
\]
It will often be convenient to take the logarithm of the asymptotic rank,
\[
\omega(T) \coloneqq \log (\aR(T)),
\]
which is called the \emph{exponent} of $T$. We write $\log \coloneqq \log_2$, the logarithm in base $2$. The limit in the definition of asymptotic rank exists by Fekete's Lemma (see, e.g., \cite[page~189]{PolSze:ProblemsTheoremsAnalysisI}), via submultiplicativity of tensor rank. The notion of asymptotic rank does not depend on whether one uses tensor rank $\rmR(T)$ or border rank $\uR(T)$ in the definition \cite{Bini:RelationsExactApproxBilAlg, Str:RelativeBilComplMatMult}. Because of the submultiplicative property of tensor rank and border rank, we have that $\aR(T) \leq \uR(T) \leq \rmR(T)$. 

The importance of asymptotic rank in the study of the arithmetic complexity of matrix multiplication comes from the following connection (we refer to \cite{BlaserNotes} for more information).
For $m_1, m_2, m_3 \in \bbN$ the \emph{matrix multiplication tensor} $\Mamu(m_1,m_2,m_3)$ is defined as
\begin{equation}\label{eqn: mamu tensor}
\Mamu(m_1,m_2,m_3) \coloneqq \sum_{i_1 = 1}^{m_1} \sum_{i_2 = 1}^{m_2} \sum_{i_3 = 1}^{m_3} e_{i_1, i_2} \otimes e_{i_2, i_3} \otimes e_{i_3, i_1} \in \bbC^{m_1m_2} \otimes \bbC^{m_2 m_3} \otimes \bbC^{m_3m_1}.
\end{equation}
This tensor defines the bilinear map $\bbC^{m_1m_2} \times  \bbC^{m_2 m_3} \to \bbC^{m_3m_1}$ which multiplies a matrix of size $m_1 \times m_2$ with one of size $m_2 \times m_3$. It is a fundamental result that the tensor rank of $\Mamu(m_1,m_2,m_3)$ characterizes the arithmetic complexity (i.e., the minimal number of  scalar additions and multiplications in any arithmetic algorithm) of matrix multiplication. In particular, for every $\varepsilon > 0$ the arithmetic complexity of $n\times n$ matrix multiplication is $\mathcal{O}(n^{\omega + \varepsilon})$ where $\omega = \omega(\Mamu(2,2,2))$. It is a major open problem whether $\omega$ equals~$2$ or is strictly larger than~$2$ \cite{BuClSho:Alg_compl_theory}.

The notion of the exponent of a tensor naturally extends to a relation on tensors called \emph{relative exponent} or \emph{rate of asymptotic conversion} \cite[Definition 1.7]{ChrVraZui:AsyRankGraph}. Following that terminology, the exponent of a $k$-tensor $T$ equals the asymptotic rate of conversion from the unit tensor~$\bfu_k (2)$ to $T$.

\subsection{Graph tensors} 

Graph tensors are a natural generalization of matrix multiplication tensors.  They are defined as a Kronecker product of unit tensors of lower order according to the structure of a hypergraph \cite{ChrZui:TensorSurgery}.

Let $G$ be a hypergraph with vertex set $V(G) = \{ 1\vvirg k\}$ and edge set~$E(G)$, that is, $E(G)$ is a set of subsets of $V(G)$. For every hyperedge $I \in E(G)$, let $ n_I \in \bbN $ be integer weight.

For every hyperedge $I = \{ i_1 \vvirg i_p\}$, define the $k$-tensor 
\[
 \bfu_{(I)}(n_I) := \Bigl[ \sum_{j = 1}^{n_I} e^{(i_1)}_{j} \ootimes e^{(i_p)}_j \Bigr] \otimes \Bigl[\bigotimes_{i' \notin I}\, e^{(i')}_{0}\Bigr] \in \Bigl(\bigotimes_{i\in I} \bbC^{n_I}\Bigr) \otimes \Bigl(\bigotimes_{i' \notin I} \bbC^1\Bigr),
\]
where $e^{(i)}_{1} \vvirg e^{(i)}_{n_I}$ is a fixed basis of $\bbC^{n_I}$ for every $i \in I$, and $e^{(i')}_0$ is a fixed basis element of $\bbC^1$ for $i' \notin I$.

The \emph{graph tensor} associated to the hypergraph $G$ with weights $\bfn = (n_I : I \in E(G))$ is defined as
\[
 T(G,\bfn) \coloneqq \bigboxtimes_{I \in E(G)} \ \bfu_{(I)}(n_I),
\]
where $\boxtimes$ denotes the Kronecker product. Thus $T(G,\bfn)$ is a $k$-tensor in $V_1 \otimes \cdots \otimes V_k$ whose $j$-th factor has a local structure $V_j = (\bigotimes _{I \ni j} \bbC^{n_I}) \otimes (\bigotimes_{I \not\ni j} \bbC^1)$. In particular, $\dim V_j = \prod_{I \ni j} \dim n_I$.

In the language of tensor networks, $T(G)$ is the \emph{generic tensor} in the tensor network variety associated to the graph $G$, as long as the local dimensions are at least as large as $\dim V_j$, see e.g. \cite[Ch. 12]{Hackbusch:TensorBook}, \cite{LanQiYe:GeomTensorNetwork}.

An important feature of graph tensors is their \emph{self-reproducing} property: if $G$ is a hypergraph with weights $\bfn = ( n_I : I \in E(G))$ and $T = T(G , \bfn)$ is the associated graph tensor, then $T^{\boxtimes N} = T(G, \bfn^{\odot N})$ where $\bfn^{\odot N}$ is the tuple of weights obtained from $\bfn$ by raising every entry to the $N$-th power. 

\begin{example}
Let $G = K_3$ be the triangle graph, that is, $G$ has vertex set $V(G) = \{1,2,3\}$ and edge set $E(G) = \{\{1,2\},\{2,3\},\{3,1\}\}$ which we write shortly as $E(G) = \{12, 23, 31\}$. Consider weights on $G$ given by $\bfn = (n_{12},n_{23},n_{31})$. The graph tensor associated to $G$ is the tensor $T(G, \bfn) \in V_1 \otimes V_2 \otimes V_3$ with $V_1 = \bbC^{n_{31}} \otimes \bbC^{n_{12}}$, $V_2 = \bbC^{n_{12}} \otimes \bbC^{n_{23}}$ and $V_3 = \bbC^{n_{23}} \otimes \bbC^{n_{31}}$ given by
\[
 T(G ,\bfn) = \sum e_{i_{31}i_{12}} \otimes e_{i_{12}i_{23}} \otimes e_{i_{23}i_{31}},
\]
where the sum ranges over the indices $i_{12}, i_{23}, i_{31}$ with $i_{12} = 1 \vvirg n_{12}$ and similarly for $i_{23}, i_{31}$. Thus $T(G,\bfn)$ equals the matrix multiplication tensor $\Mamu(n_{12},n_{23},n_{31})$ in \eqref{eqn: mamu tensor}. In general, we may represent any graph tensor $T(G ,\bfn)$ by the defining weighted graph with vertices labeled by the appropriate vector spaces $V_i$. In this case,
\[
T(G,\bfn) \quad =  \quad \begin{minipage}{.3\textwidth}       \begin{tikzpicture}[scale=1]
\draw (0,1.5)-- (0,0);
\draw (0,0)-- (1.5,0);
\draw (0,1.5)-- (1.5,0);
\draw (.75,0) node[anchor=north] {$n_{12}$};
\draw (0.75,0.75) node[anchor=south west] {$n_{23}$};
\draw (0,.75) node[anchor=east] {$n_{31}$};
\draw[fill=black] (0,0) circle (.1cm);
\draw[fill=black] (1.5,0) circle (.1cm);
\draw[fill=black] (0,1.5) circle (.1cm);
\draw (0,1.6) node[anchor=south] {$V_3$};
\draw (1.6,0) node[anchor=west] {$V_2$};
\draw (0,0) node[anchor=north east] {$V_1$};
\end{tikzpicture}
\end{minipage}.
\]
We will often drop the notation $V_i$ from the picture. 

More generally, the graph tensor associated to the cycle graph $C_k$ of length $k$ is the iterated matrix multiplication tensor of order $k$.
\end{example}

\begin{example}[Unit tensors]
For any $k$ let $G$ be the graph with vertex set $V(G) = \{1 \vvirg k\}$ and edge set $E(G) = \{\{1, \ldots, k\}\}$. That is, $G$ has a single hyperedge containing all vertices. Consider the weight $\bfn = r \in \bbN$ for this hyperedge. Then the associated graph tensor $T(G,\bfn)$ equals the unit tensor~$\bfu_k(r)$ defined in Subsection~\ref{subsec:degen}. For the case $k=3$, the graphical representation for this graph tensor is:
 \[
T(G,r) \quad =  \quad \begin{minipage}{.3\textwidth}       \begin{tikzpicture}[scale=1]
\draw [rounded corners=2mm,fill=gray!50] (-.2,-.2)--(2,-.2)--(-.2,2)--cycle;
\draw [rounded corners=2mm,fill=white] (.05,.05)--(1.4,.05)--(.05,1.4)--cycle;
\draw[fill=black] (0,0) circle (.1cm);
\draw[fill=black] (1.5,0) circle (.1cm);
\draw[fill=black] (0,1.5) circle (.1cm);
\draw (.85,.85) node[anchor=south west] {$r$};
\draw (.2,1.6) node[anchor=south] {$V_3$};
\draw (1.6,.2) node[anchor=west] {$V_2$};
\draw (-.1,-.1) node[anchor=north east] {$V_1$};
\end{tikzpicture}
\end{minipage}
\]
  \end{example}

Back to the general setting, since border rank is submultiplicative under the Kronecker product, we have a trivial upper bound for the asymptotic rank of graph tensors given by the product of the rank of the factors from which they arise. In particular, we have the asymptotic rank upper bound
\begin{equation}\label{eqn: trivial asy rank}
 \aR(T(G,\bfn)) \leq \uR(T(G,\bfn)) \leq \prod_{\mathclap{I \in E(G)}} n_I.
  \end{equation}
Consequently, the exponent of $T(G,\bfn)$ is bounded from above by the logarithm of the right hand side of \eqref{eqn: trivial asy rank}, that is $\omega(T(G,\bfn)) \leq \sum_{I \in E(G)} \log(n_I)$.

\subsection{Sch\"onhage's construction and the exponent of matrix multiplication}\label{subsec: schon}

We review Sch\"onhage's construction of strict subadditivity of border rank of $3$-tensors under the direct sum. The higher order examples in Section~\ref{section: constructions} are largely inspired by this construction.

Fix $n_1,n_2 \geq 1$ and consider the two tensors associated to the following graphs:
\[
   T_1 \quad = \qquad \begin{minipage}{.3\textwidth}
       \begin{tikzpicture}[scale=1]
\draw (0,1.5)-- (0,0);
\draw (0,0)-- (1.5,0);
\draw (.75,0) node[anchor=north] {$n_1 +1$};
\draw (0,.75) node[anchor=east] {$n_2 +1$};
\draw (0,1.6) node[anchor=south] {$V_3$};
\draw (1.6,0) node[anchor=west] {$V_2$};
\draw (0,0) node[anchor=north east] {$V_1$};
\draw[fill=black] (0,1.5) circle (.1cm);
\draw[fill=black] (1.5,0) circle (.1cm);
\draw[fill=black] (0,0) circle (.1cm);
\end{tikzpicture}
      \end{minipage}
      \qquad
        T_2 \quad = \qquad \begin{minipage}{.3\textwidth}
       \begin{tikzpicture}[scale=1]
\draw (0,1.5)-- (1.5,0);
\draw (0,1.6) node[anchor=south] {$W_3$};
\draw (1.6,0) node[anchor=west] {$W_2$};
\draw (0,0) node[anchor=north east] {$W_1$};
\draw (.7,.7) node[anchor=south west] {$n_1n_2$};
\draw[fill=black] (0,1.5) circle (.1cm);
\draw[fill=black] (1.5,0) circle (.1cm);
\draw[fill=black] (0,0) circle (.1cm);
\end{tikzpicture}
      \end{minipage}
\]

It is immediate that $\uR(T_1) = (n_1+1)(n_2+1)$ and $\uR(T_2) = n_1n_2$, so that one obtains the trivial upper bound on the direct sum: $\uR(T_1 \oplus T_2) \leq (n_1+1)(n_2+1) + n_1n_2$. Sch\"onhage proved $\uR(T_1 \oplus T_2) = (n_1+1) ( n_2+1) + 1$~\cite{Schon:PartTotalMaMu} (see also \cite{BlaserNotes}). In particular, whenever $n_1 \geq 2$ or $n_2 \geq 2$, this construction provides an example of strict subadditivity of border rank. 

Note that $T_1 \boxtimes T_2$ is the matrix multiplication tensor with edge weights $\bfn = (n_1+1,n_2+1, n_1n_2)$. Using the strict subadditivity result Sch\"onhage provided an upper bound on the exponent of matrix multiplication. We provide two key results which are useful to reproduce Sch\"onhage's upper bound on the exponent of matrix multiplication as well as the upper bounds on the exponent of certain graph tensors in Section \ref{section: exponent}. We refer to \cite{BlaserNotes} and \cite[Sec. 2]{Zuid:Thesis} for additional information.

\begin{lemma}\label{catal}
Let $S,T, U$ be tensors such that $S \boxtimes T \degleq S \boxtimes U$. Then for every $N \in \bbN$ we have
\[
S \boxtimes T^{\boxtimes N}  \degleq  S \boxtimes U^{\boxtimes N}.
\]
In particular, if $\bfu_k(s) \boxtimes T \degleq \bfu_k( r )$ for some integers $r,s$, then for all $N \in \bbN$ we have
\[
\bfu_k(s) \boxtimes T^{\boxtimes N} \degleq \bfu_k(s) \boxtimes \bfu_k( \bigl\lceil \tfrac{r}{s} \bigr\rceil^N ).
\]
\end{lemma}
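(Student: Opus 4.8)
The plan is to prove the first statement by induction on $N$ and then derive the second statement as an easy consequence. First I would observe that degeneration is a preorder closed under the Kronecker product: if $A \degleq B$ then $A \boxtimes C \degleq B \boxtimes C$ for any tensor $C$ (this follows because $\overline{G\cdot A} \boxtimes C \subseteq \overline{(G\times 1)\cdot(B\boxtimes C)}$, using that the closure of the image is contained in the image of the closure for the orbit maps, together with continuity of the Kronecker product). Transitivity of $\degleq$ is immediate from the definition since the composition of two orbit closures is again contained in a single orbit closure after re-embedding.

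The base case $N=1$ is the hypothesis $S\boxtimes T \degleq S\boxtimes U$. For the inductive step, suppose $S\boxtimes T^{\boxtimes N} \degleq S\boxtimes U^{\boxtimes N}$. Kronecker-multiplying both sides by $T$ and using closure of $\degleq$ under $\boxtimes$ gives
\[
S\boxtimes T^{\boxtimes (N+1)} \degleq S\boxtimes U^{\boxtimes N}\boxtimes T.
\]
Now I would like to replace the trailing $T$ by $U$ inside this expression. The key point is that $S\boxtimes U^{\boxtimes N}$ itself is of the form $S'$ with $S'\boxtimes T \degleq S'\boxtimes U$: indeed, starting from $S\boxtimes T\degleq S\boxtimes U$ and Kronecker-multiplying both sides by $U^{\boxtimes N}$ yields exactly $(S\boxtimes U^{\boxtimes N})\boxtimes T \degleq (S\boxtimes U^{\boxtimes N})\boxtimes U = S\boxtimes U^{\boxtimes(N+1)}$. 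Chaining this with the previous display via transitivity of $\degleq$ completes the induction.

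For the "in particular" statement, I would set $S = U = \bfu_k(s)$ and $T$ arbitrary with hypothesis $\bfu_k(s)\boxtimes T \degleq \bfu_k(r)$; wait — here $U$ must instead be taken as a unit tensor, so more precisely I apply the general statement with $S\boxtimes T \degleq S\boxtimes U$ where $S=\bfu_k(s)$ and $U = \bfu_k(\lceil r/s\rceil)$. The hypothesis $\bfu_k(s)\boxtimes T \degleq \bfu_k(r)$ combined with $\bfu_k(r) \degleq \bfu_k(s)\boxtimes\bfu_k(\lceil r/s\rceil)$ (which holds by \eqref{eqn: border rank deg} since $\uR(\bfu_k(s)\boxtimes\bfu_k(\lceil r/s\rceil)) \le s\lceil r/s\rceil \ge r$, using $\uR(\bfu_k(m)) = m$ and submultiplicativity in the trivial direction — concretely $\bfu_k(r)$ is a restriction, hence a degeneration, of $\bfu_k(s)\boxtimes\bfu_k(\lceil r/s\rceil)\cong\bfu_k(s\lceil r/s\rceil)$) gives $S\boxtimes T \degleq S\boxtimes U$. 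Then the main statement yields $\bfu_k(s)\boxtimes T^{\boxtimes N} \degleq \bfu_k(s)\boxtimes \bfu_k(\lceil r/s\rceil)^{\boxtimes N} = \bfu_k(s)\boxtimes\bfu_k(\lceil r/s\rceil^N)$, using the self-reproducing identity $\bfu_k(m)^{\boxtimes N} = \bfu_k(m^N)$.

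The main obstacle I anticipate is purely a matter of careful bookkeeping rather than a genuine difficulty: one must make sure that "closure of $\degleq$ under $\boxtimes$" is applied to the correct factor each time, and that the re-embedding of vector spaces needed to make sense of $\degleq$ between tensors in different spaces does not interfere with the Kronecker product structure. Both are routine given the setup in Subsection~\ref{subsec:degen}, and the inductive argument itself is short.
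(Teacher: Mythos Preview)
Your proposal is correct and follows essentially the same approach as the paper: both argue by induction, chaining the original hypothesis (Kronecker-multiplied by a power of $T$ or $U$) with the inductive hypothesis (Kronecker-multiplied by a single $T$ or $U$) via transitivity, and both derive the ``in particular'' by observing $\bfu_k(r) \degleq \bfu_k(s)\boxtimes\bfu_k(\lceil r/s\rceil)$ and specializing the main statement. The only cosmetic difference is that the paper applies the original hypothesis first and the inductive hypothesis second in the chain, whereas you do it in the opposite order; this is immaterial.
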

\begin{proof}
 The proof is by induction. The base case $S \boxtimes T \degleq  S \boxtimes U$ is true by assumption. The induction step is
\[
S \boxtimes T^{\boxtimes n} = S \boxtimes T \boxtimes  T^{\boxtimes  (n- 1)} \degleq  S \boxtimes U \boxtimes T^{\boxtimes (n-1)} \degleq  S \boxtimes U \boxtimes U^{\boxtimes (n-1)} = S \boxtimes U^{\boxtimes n},
\]
where we first use the assumption in the inequality $S \boxtimes T \degleq  S \boxtimes U$ and then we use the inductive hypothsis in the inequality $S \boxtimes T^{\boxtimes( n-1)} \degleq  S \boxtimes U^{\boxtimes (n-1)}$. 

If $\bfu_k(s) \boxtimes T \degleq \bfu_k(r)$, then $\bfu_k(s) \boxtimes T  \degleq \bfu_k(s) \boxtimes \bfu_k( \lceil r/s\rceil )$. Applying the first part of the Lemma with $S = \bfu_k(s)$ and $U =  \bfu_k( \lceil r/s \rceil )$ provides the desired result.
\end{proof}

\begin{proposition}\label{prop: bound exponent general}
Let $T_1 \in V_1 \ootimes V_k$ and $T_2 \in W_1 \ootimes W_k$ be two tensors. Suppose $\uR(T_1 \oplus T_2) \leq r$. Let $N \geq 0$ be an integer and let $p \in (0,1)$ such that $pN$ is an integer. Then
\[
\uR(  T_1^{\boxtimes Np} \boxtimes T_2^{\boxtimes N(1-p)}) \leq \left(\frac{r}{2^{h(p)+o(1)}} \right) ^N
\]
where $h(p)$ is the binary entropy function $h(p) = - p \log(p) - (1-p) \log(1-p)$.
\end{proposition}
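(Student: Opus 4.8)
The plan is to expand the $N$-th Kronecker power of $T_1 \oplus T_2$ and extract the single ``balanced'' term that appears with the largest multiplicity. First I would recall the elementary identity for direct sums under Kronecker product, namely that $(T_1 \oplus T_2)^{\boxtimes N}$ decomposes as a direct sum, over all subsets $A \subseteq \{1,\dots,N\}$, of the Kronecker product of $T_1$'s in the coordinates of $A$ and $T_2$'s in the complement. Grouping by $|A| = pN$, this gives
\[
(T_1 \oplus T_2)^{\boxtimes N} \;=\; \bigoplus_{j=0}^{N} \binom{N}{j}\cdot \bigl( T_1^{\boxtimes j} \boxtimes T_2^{\boxtimes (N-j)} \bigr),
\]
where $\binom{N}{j} \cdot X$ means the direct sum of $\binom{N}{j}$ copies of the tensor $X$ (each copy living in its own block of the ambient space). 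In particular, for the single value $j = pN$, the tensor $\binom{N}{pN} \cdot \bigl(T_1^{\boxtimes pN} \boxtimes T_2^{\boxtimes N(1-p)}\bigr)$ is (isomorphic to) a direct summand, hence a restriction, of $(T_1\oplus T_2)^{\boxtimes N}$, and therefore its border rank is at most that of the whole power.

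Next I would use submultiplicativity of border rank under the Kronecker product together with the hypothesis $\uR(T_1 \oplus T_2) \leq r$ to get $\uR\bigl((T_1\oplus T_2)^{\boxtimes N}\bigr) \leq r^N$. Combining this with the previous paragraph,
\[
\binom{N}{pN} \cdot \uR\bigl( T_1^{\boxtimes pN} \boxtimes T_2^{\boxtimes N(1-p)} \bigr) \;\leq\; \uR\Bigl(\binom{N}{pN}\cdot \bigl(T_1^{\boxtimes pN}\boxtimes T_2^{\boxtimes N(1-p)}\bigr)\Bigr) \;\leq\; r^N,
\]
where the leftmost inequality is just the trivial fact that a direct sum of $\binom{N}{pN}$ copies of a tensor $X$ has border rank at least $\binom{N}{pN}$ times $\uR(X)$ — indeed, via the flattening/Grassmannian characterization of Proposition~\ref{prop: rank and border rank via image of flattening}, the image of a flattening of the direct sum is the direct sum of the images, and a limit of spanning rank-one planes restricts to a limit of spanning rank-one planes on each block, so border rank of a block sum is at least the sum of the block border ranks. (Equivalently one can just cite additivity of border rank from below under direct sum, which is standard.) Rearranging yields
\[
\uR\bigl( T_1^{\boxtimes pN}\boxtimes T_2^{\boxtimes N(1-p)} \bigr) \;\leq\; \frac{r^N}{\binom{N}{pN}}.
\]

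Finally I would invoke the standard entropy estimate for binomial coefficients: $\binom{N}{pN} = 2^{N h(p) + o(N)}$ (for instance $\binom{N}{pN} \geq \tfrac{1}{N+1} 2^{N h(p)}$, and the $\tfrac{1}{N+1}$ factor is absorbed into $2^{o(N)}$). Substituting gives
\[
\uR\bigl( T_1^{\boxtimes pN}\boxtimes T_2^{\boxtimes N(1-p)} \bigr) \;\leq\; \frac{r^N}{2^{Nh(p) + o(N)}} \;=\; \left( \frac{r}{2^{h(p) + o(1)}} \right)^N,
\]
which is exactly the claimed bound. I do not anticipate a serious obstacle here; the only point requiring a little care is making the block-diagonal ``direct sum of $\binom{N}{j}$ copies'' bookkeeping precise (ensuring the copies genuinely sit in independent coordinate blocks of $(T_1\oplus T_2)^{\boxtimes N}$, which is exactly what the combinatorial expansion of the direct sum gives), and correctly tracking that the $o(1)$ in the exponent comes from the polynomial prefactor in Stirling's estimate for $\binom{N}{pN}$. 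Everything else is a direct application of submultiplicativity and super-additivity of border rank together with the elementary identity for powers of a direct sum.
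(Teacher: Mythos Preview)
Your binomial expansion and the restriction to the single direct summand are correct and match the paper. The gap is at the step where you ``divide'' by $\binom{N}{pN}$. You write that it is a ``trivial fact'' that $\uR\bigl(\binom{N}{pN}\cdot X\bigr) \geq \binom{N}{pN}\cdot \uR(X)$, and you justify it by saying that a limit of rank-one spanning planes restricts to a limit of rank-one spanning planes on each block. The restriction part is true, but it only gives $\uR(X)\leq r^N$ for each block separately, not $\binom{N}{pN}\cdot\uR(X)\leq r^N$: the \emph{same} $r^N$ rank-one elements project onto every block, so nothing forces the bounds to add. In other words you are invoking super-additivity of border rank under direct sum, $\uR(T\oplus S)\geq \uR(T)+\uR(S)$. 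This is precisely the inequality whose failure is the subject of the entire paper (Sch\"onhage's example and the constructions in Section~\ref{section: constructions}), and even for direct sums of identical copies it is not a standard fact---it is open in general, and your flattening sketch does not prove it.

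The paper handles this step differently, via Lemma~\ref{catal} (the catalytic lemma). From $\bfu_k(s)\boxtimes X \degleq \bfu_k(r^N)$ with $s=\binom{N}{pN}$ one cannot cancel $\bfu_k(s)$ directly, but Lemma~\ref{catal} lets one iterate: $\bfu_k(s)\boxtimes X^{\boxtimes m}\degleq \bfu_k\bigl(s\cdot\lceil r^N/s\rceil^{m}\bigr)$ for every $m$. Since $X^{\boxtimes m}= T_1^{\boxtimes pNm}\boxtimes T_2^{\boxtimes (1-p)Nm}$, this yields $\uR(T_1^{\boxtimes pM}\boxtimes T_2^{\boxtimes(1-p)M})\leq s\cdot\lceil r^N/s\rceil^{M/N}$ along the subsequence $M=Nm$, and the factor $s$ in front is absorbed by the $o(1)$ in the exponent as $M\to\infty$. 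This is the missing ingredient in your argument: you need the catalytic trick, not super-additivity.
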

\begin{proof}
Consider the binomial expansion of $(T_1 \oplus T_2) ^{\boxtimes N}$:
\[
 (T_1 \oplus T_2) ^{\boxtimes N} = \bigoplus_{M = 0}^N \bfu_k \Bigl({\textbinom{N}{M}} \Bigr) \boxtimes (T_1 ^{\boxtimes M} \boxtimes T_2^{\boxtimes(N-M)}).
\]
It is immediate that the right-hand side above degenerates to each direct summand: in particular $ (T_1 \oplus T_2) ^{\boxtimes N} \deggeq \binom{N}{pN}\boxtimes (T_1 ^{\boxtimes pN} \boxtimes T_2^{\boxtimes(1-p)N})$. 

Moreover, since $\uR(T_1 \oplus T_2) \leq r$, from \eqref{eqn: border rank deg}, we obtain $T_1 \oplus T_2 \degleq \bfu_k(r)$, and therefore $(T_1 \oplus T_2)^{\boxtimes N} \degleq \bfu_k(r)^N$. Thus, 
\[
\bfu_k (r^N) \deggeq \bfu_k \left({\textbinom{N}{pN}} \right) \boxtimes (T_1 ^{\boxtimes pN} \boxtimes T_2^{\boxtimes((1-p)N)}) .
\]
Using Lemma \ref{catal}, we have
\[
 \bfu_k \left( r^N /  \left({\textbinom{N}{pN}} \right) \right) \deggeq \bfu_k \left({\textbinom{N}{pN}} \right) \boxtimes (T_1 ^{\boxtimes pN} \boxtimes T_2^{\boxtimes((1-p)N)}) \deggeq T_1 ^{\boxtimes pN} \boxtimes T_2^{\boxtimes((1-p)N)}
\]
Recall that $\textbinom{N}{pN} = 2^{N h(p) + o(1)}$ where $h(p)$ is the binary entropy function. This gives
\[
 \uR( T_1 ^{\boxtimes pN} \boxtimes T_2^{\boxtimes((1-p)N)} )  \leq \left(\frac{r}{2^{h(p)+o(1)}} \right) ^N,
\]
and concludes the proof.
\end{proof}

Because of the self-reproducing property of graph tensors, it is convenient to allow the weights of the graph to have fractional exponents. We will use this convention in order to give asymptotic statements with the understanding that the statement holds for the Kronecker powers for which the dimensions have integer values. More precisely, given a tensor $T$ and values $q \in (0,1)$ and $\rho \geq 0$, the statement $\aR(T^{\boxtimes q}) \leq \rho$ is to be read as $\uR(T^{\boxtimes Nq}) \leq \rho^{N + o(1)}$ for all $N$ for which $qN$ is an integer. From this point of view, after taking an $N$-th root in Proposition \ref{prop: bound exponent general}, we obtain the asymptotic bound
\[
\aR\left( T_1 ^{\boxtimes p} \boxtimes T_2^{\boxtimes(1-p)} \right) \leq  \frac{r}{2^{h(p)}} .
\]
After taking the logarithm, we have a bound on the exponent
\begin{equation}\label{eqn: bound exponent general}
\omega \left(T_1 ^{\boxtimes p} \boxtimes T_2^{\boxtimes(1-p)}\right) \leq  \log(r) - h(p) .
 \end{equation}
Sch\"onhage's construction provides tensors $T_1,T_2$ with $\uR(T_1 \oplus T_2) = (n_1+1)(n_2+1)+1$ and $T_1^{p} \boxtimes T_2^{1-p} = \Mamu((n_1+1)^p,(n_2+1)^p, (n_1n_2)^{1-p})$. Applying Proposition \ref{prop: bound exponent general}, one obtains
\[
 \omega\left(\Mamu((n_1+1)^p,(n_2+1)^p, (n_1n_2)^{1-p}) \right) \leq \log( (n_1+1)(n_2+1)+1 )- h(p).
\]
For $n_1=n_2=3$, we obtain $\omega ( \Mamu(4^p,4^p,9^{1-p})) \leq \log(17) - h(p)$. Cyclically permuting the factors and using the self-reproducing property of the matrix multiplication tensor, one obtains an upper bound on the exponent of a square matrix multiplication and, by passing to the asymptotic rank,
\[
 \omega(\Mamu(2,2,2)) \leq \frac{3(\log(17) - h(p))}{ 4p + (1-p) \log(9)}.
\]
The right hand side attains its minimum at $p \approx 0.61$, giving Sch\"onhage's upper bound on the exponent $\omega( \Mamu(2,2,2)) \leq 2.55$.

\section{Strict subadditivity of border rank}\label{section: constructions}

In this section we provide four families of examples of strict subadditivity of border rank for higher order tensors. The subadditivity results are recorded in Theorem \ref{thm: matrix half size}, Theorem \ref{thm: absorb big leg}, Theorem \ref{thm: matrix on many legs} and Theorem \ref{thm: absorbing GHZ}.

All constructions are characterized by a structure similar to Sch\"onhage's. We consider two graph tensors: 
\begin{itemize}
 \item[$\cdot$] The tensor $T_1$ is a \emph{spider}, that is, a graph tensor where the underlying graph has all edges incident to a single vertex. In this case, the graph tensor is, up to change of coordinates, the only concise tensor in its space.
 \item[$\cdot$] The tensor $T_2$ is either a matrix, that is, a graph tensor with a single edge, or $\bfu_3(r)$, that is, a graph tensor with a single hyperedge of order three.
\end{itemize}
Constructions 1, 2 and 3 add a matrix to the spider. Construction 1 provides a construction for tensors of order $4$ where the direct sum attains minimal border rank. For large edge dimensions, the border rank upper bound is roughly $2/3$ times the trivial additive upper bound. Construction 2 provides an improvement of Construction 1 for certain smaller edge dimensions. Construction 3 concerns tensors of all orders and gives an optimal savings of a factor of $2$ for large edge dimensions. Construction 4 adds a unit tensor to the legs of a three-legged spider. 

\subsection*{Construction 1: Adding a matrix}

This first construction concerns tensors of order four. Fix $n_1,n_2,n_3 \geq 2$ with $n_1$ (or $n_2$ or $n_3$) odd. Consider the following two tensors:
\[
   T_1 \quad = \qquad \begin{minipage}{.3\textwidth}
       \begin{tikzpicture}[scale=1]
\draw (0,1.5)-- (0,0);
\draw (0,0)-- (1.5,0);
\draw (0,0)-- (-1,-1);
\draw[fill=black] (1.5,0) circle (.1cm);
\draw[fill=black] (0,1.5) circle (.1cm);
\draw[fill=black] (0,0) circle (.1cm);
\draw[fill=black] (-1,-1) circle (.1cm);
\draw (.75,0) node[anchor=north] {$n_1 +1$};
\draw (0,.75) node[anchor=east] {$n_2 +1$};
\draw (-.5,-.5) node[anchor=south east] {$n_3 +1$};
\draw (-1,-1) node[anchor=north east] {$V_3$};
\draw (1.55,0) node[anchor=west] {$V_1$};
\draw (0,1.55) node[anchor=south] {$V_2$};
\draw (0,-.1) node[anchor=south east] {$V_4$};
\end{tikzpicture}
      \end{minipage}
      \qquad
        T_2 \quad = \qquad \begin{minipage}{.3\textwidth}
       \begin{tikzpicture}[scale=1]
\draw (0,1.5)-- (1.5,0);
\draw[fill=black] (1.5,0) circle (.1cm);
\draw[fill=black] (0,1.5) circle (.1cm);
\draw[fill=black] (0,0) circle (.1cm);
\draw[fill=black] (-1,-1) circle (.1cm);
\draw (.7,.7) node[anchor=south west] {$N$};
\draw (-1,-1) node[anchor=north east] {$W_3$};
\draw (1.55,0) node[anchor=west] {$W_1$};
\draw (0,1.55) node[anchor=south] {$W_2$};
\draw (0,-.1) node[anchor=south east] {$W_4$};
\end{tikzpicture}
      \end{minipage}
\]
where $N = \frac{1}{2}(n_1-1)(n_2-1)(n_3-1)$. In this case, we have the following result.

\begin{theorem}\label{thm: matrix half size}
For every $n_1,n_2,n_3$ with $n_1$ odd, we have 
\begin{align*}
\uR(T_1)&= (n_1+1)(n_2+1)(n_3+1),\\ 
\uR(T_2)&= N,
\end{align*}
and
\[
\uR(T_1 \oplus T_2) = (n_1+1)(n_2+1)(n_3+1) + 1. 
\]
\end{theorem}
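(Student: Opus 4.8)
\emph{The two border-rank equalities.} Since $T_1$ is a sum of $(n_1+1)(n_2+1)(n_3+1)$ rank-one tensors, $\uR(T_1)\le\rmR(T_1)\le(n_1+1)(n_2+1)(n_3+1)$; and since the flattening map of $T_1$ on the fourth factor $V_4^*\to V_1\otimes V_2\otimes V_3$ is an isomorphism, $\uR(T_1)$ is at least its rank. This gives $\uR(T_1)=(n_1+1)(n_2+1)(n_3+1)$. For $T_2$: after re-embedding and acting by $\GL$ it is the $N\times N$ identity matrix tensored with two one-dimensional factors, so $\uR(T_2)=\rmR(T_2)=N$. (The hypothesis that one of the $n_i$ is odd serves only to make $N=\tfrac12(n_1-1)(n_2-1)(n_3-1)$ an integer.) For the lower bound on the direct sum, Proposition~\ref{prop: rank and border rank via image of flattening} gives that $\uR(T_1\oplus T_2)$ is at least the rank of its fourth flattening, i.e.\ the dimension of $T_1(V_4^*)\oplus T_2(W_4^*)=(V_1\otimes V_2\otimes V_3)\oplus\langle M\otimes w\rangle$; here $M\in W_1\otimes W_2$ is the rank-$N$ matrix underlying $T_2$ and $w$ spans the one-dimensional space $W_3$, so this dimension is $(n_1+1)(n_2+1)(n_3+1)+1$.

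\emph{Upper bound: set-up.} The substance of the theorem is the reverse inequality $\uR(T_1\oplus T_2)\le r$ with $r:=(n_1+1)(n_2+1)(n_3+1)+1$, which I would prove by adapting Sch\"onhage's construction and using Proposition~\ref{prop: rank and border rank via image of flattening} for the fourth flattening. With $E=(V_1\otimes V_2\otimes V_3)\oplus\langle M\otimes w\rangle$ (of dimension exactly $r$), it suffices to exhibit a curve $\eps\mapsto E(\eps)$ of $r$-dimensional subspaces of $(V_1\oplus W_1)\otimes(V_2\oplus W_2)\otimes(V_3\oplus W_3)$, each spanned by rank-one tensors, with $\lim_{\eps\to0}E(\eps)=E$. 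The plan is to use one ``grid'' generator
\[
Z_{i_1 i_2 i_3}(\eps)=(x_{i_1}+\eps\,\alpha_{i_1 i_2 i_3})\otimes(y_{i_2}+\eps\,\beta_{i_1 i_2 i_3})\otimes(z_{i_3}+\eps\,g_{i_1 i_2 i_3}\,w)
\]
for each triple $(i_1,i_2,i_3)$, with perturbation data $\alpha_{i_1 i_2 i_3}\in W_1$, $\beta_{i_1 i_2 i_3}\in W_2$, $g_{i_1 i_2 i_3}\in\bbC$ to be chosen, plus one extra rank-one generator $Z_\star(\eps)$ with $Z_\star(0)\in V_1\otimes V_2\otimes V_3$. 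Each $Z_{i_1 i_2 i_3}(\eps)$ tends to $x_{i_1}\otimes y_{i_2}\otimes z_{i_3}$, so $E(0)$ automatically contains $V_1\otimes V_2\otimes V_3$; as $\dim E(0)=r$, the only remaining task is to arrange $M\otimes w\in E(0)$.

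\emph{Upper bound: extracting the correction.} Decompose the ambient space into the eight blocks $A_1\otimes A_2\otimes A_3$, $A_j\in\{V_j,W_j\}$. Now $M\otimes w$ lies in $W_1\otimes W_2\otimes W_3$, while the Taylor coefficients of $Z_{i_1 i_2 i_3}(\eps)$ of orders $\eps^0,\eps^1,\eps^2$ lie in $V_1\otimes V_2\otimes V_3$, in the blocks with one perturbed leg, and in the blocks with two perturbed legs respectively; only the order-$\eps^3$ coefficient $g_{i_1 i_2 i_3}\,\alpha_{i_1 i_2 i_3}\otimes\beta_{i_1 i_2 i_3}\otimes w$ reaches $W_1\otimes W_2\otimes W_3$. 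Hence $M\otimes w\in E(0)$ follows once there are coefficients $c_l(\eps)$, polynomial in $\eps$, for which $\eps^{-3}\sum_l c_l(\eps)Z_l(\eps)$ has vanishing $\eps^{-3},\eps^{-2},\eps^{-1}$ parts, vanishing $\eps^0$-part in every block other than $W_1\otimes W_2\otimes W_3$, and $\eps^0$-part a nonzero multiple of $M\otimes w$. Using the freedom in the higher Taylor coefficients of the $c_l$ and in $Z_\star$, the cancellations in the blocks meeting some $V_j$ reduce to a system of linear conditions on the perturbation data; granting these, what remains is the single requirement that a combination $\sum_{i_1 i_2 i_3}\lambda_{i_1 i_2 i_3}\,\alpha_{i_1 i_2 i_3}\otimes\beta_{i_1 i_2 i_3}$ of rank-one matrices --- the scalars $\lambda_{i_1 i_2 i_3}$ being determined up to a common factor by the lower-order cancellations and the perturbation data --- equal the matrix $M$.

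\emph{The main obstacle, and conclusion.} The hardest step, and the real content of the construction, is to choose $\alpha_{i_1 i_2 i_3}$, $\beta_{i_1 i_2 i_3}$, $g_{i_1 i_2 i_3}$ and $Z_\star$ so that all the lower-order cancellations are simultaneously solvable and the matrix one is forced to produce has rank exactly $N$. A dimension count should identify this rank with $\tfrac12(n_1-1)(n_2-1)(n_3-1)$: the admissible combinations ought to be indexed by the interior grid $\{2,\dots,n_1\}\times\{2,\dots,n_2\}\times\{2,\dots,n_3\}$ modulo a fixed-point-free involution on one coordinate, e.g.\ $(i_1,i_2,i_3)\mapsto(n_1+2-i_1,i_2,i_3)$, which is fixed-point free precisely because $n_1$ is odd (i.e.\ because $(n_1-1)(n_2-1)(n_3-1)$ is even), each orbit contributing one rank-one summand of $M$. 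Finally one checks that the $r$ generators $Z_l(\eps)$ are linearly independent for small $\eps\neq0$, which is true for generic perturbation data; Proposition~\ref{prop: rank and border rank via image of flattening} then gives $\uR(T_1\oplus T_2)\le r$, and together with the lower bound, equality.
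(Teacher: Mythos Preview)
Your framework is exactly the paper's: use Proposition~\ref{prop: rank and border rank via image of flattening} on the fourth flattening, take one perturbed rank-one generator $Z_{i_1i_2i_3}(\eps)$ per grid point together with one extra generator, and arrange that a suitable combination of the $Z$'s has vanishing $\eps^0,\eps^1,\eps^2$ parts and $\eps^3$ part equal to the rank-$N$ matrix. The border-rank values of $T_1$, $T_2$ and the conciseness lower bound are all correct.

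The gap is that you stop precisely at what you yourself call ``the hardest step, and the real content of the construction'': you never choose the perturbation data, and the phrases ``a dimension count should identify'', ``ought to be indexed'', ``granting these'' are not a proof. Your heuristic --- pair interior grid points by the involution $i_1\mapsto n_1+2-i_1$ --- correctly predicts the count $N=\tfrac12(n_1-1)(n_2-1)(n_3-1)$, but it does not by itself force the three separate degree-$2$ blocks ($W_1\otimes W_2\otimes V_3$, $W_1\otimes V_2\otimes W_3$, $V_1\otimes W_2\otimes W_3$) to cancel simultaneously, nor does it explain how the degree-$1$ terms are absorbed. In the paper's construction these cancellations are \emph{not} all handled by a single involution: the $N$ ``full'' perturbations (all three legs) live on the block $[1,\tfrac{n_1-1}{2}]\times[1,n_2-1]\times[1,n_3-1]$; the $W_1\otimes W_2\otimes V_3$ degree-$2$ terms are killed by two-leg perturbations on the shifted block $[\tfrac{n_1+1}{2},n_1-1]\times[1,n_2-1]\times[1,n_3-1]$ with a sign flip on the second leg; the remaining $W_1\otimes V_2\otimes W_3$ and $V_1\otimes W_2\otimes W_3$ terms are killed by tensors placed at indices of the form $(n_1,k_2,0)$ and $(k_1,n_2,0)$, using the extra basis vector $v^3_0$; and only then are the degree-$1$ leftovers absorbed into the generators indexed by triples with a zero coordinate. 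The extra generator is the all-ones rank-one tensor, and the crucial identity is that $\sum Z_{i_1i_2i_3}-Z_{-1}=\eps^3\,\bfu(N)$ exactly. You would need to exhibit a comparable explicit scheme (or a genuine existence argument, not a dimension heuristic) to close the proof.
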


\begin{proof}
For $p=1,2,3$, write $V_p = \bbC^{n_p+1}$ and let $V_4 = \bbC^{(n_1+1)(n_2+1)(n_3+1)}$. Let $\{ v^p_j : j =0 \vvirg n_p\}$ be a basis of $V_p$ and $\{ v^{4}_{i_1,i_2,i_3} : i_p= 0 \vvirg n_p\}$ be a basis of $V_4$. We have $T_1 \in V_1 \ootimes V_4$.

Similarly, for $p = 1,2$, let $W_p = \bbC^N$ and for $p=3,4$ let $W_p = \bbC^1$. Write $m_1 = \frac{1}{2}(n_1 - 1)$, $m_2 = n_2-1$ and $m_3 = n_3-1$. For $p=1,2$ let $\{w^p_{j_1,j_2,j_3} : j_p = 1 \vvirg m_p\}$ be a basis of $W_p$ and let $W_p = \langle w^p\rangle$ for $p =3,4$; note that indeed these are $\frac{n_1-1}{2} (n_2-1)(n_3-1) = N$ vectors. We have $T_2 \in W_1 \ootimes W_4$.

Regard $T_1 \oplus T_2$ as a tensor in $(V_1 \oplus W_1) \ootimes (V_4 \oplus W_4)$.

The values of $\uR(T_1)$ and $\uR(T_2)$ are immediate. The lower bound $\uR(T_1 \oplus T_2) \geq (n_1+1)(n_2+1)(n_3+1) + 1$ follows by conciseness.

For the upper bound, we determine a set of $(n_1+1)(n_2+1)(n_3+1) + 1$ rank-one elements $\calZ_\eps \subseteq (V_1 \oplus W_1)\otimes (V_2 \oplus W_2) \otimes (V_3 \oplus W_3)$ such that $(T_1 \oplus T_2)(V_4^* \oplus W_4^*) \subseteq \lim \langle \calZ_\eps \rangle$. By Proposition \ref{prop: rank and border rank via image of flattening}, this provides the desired upper bound.

Note
\[
 (T_1 \oplus T_2)(V_4^* \oplus W_4^*) = V_1 \otimes V_2 \otimes V_3 \oplus \langle \bfu(N) \rangle
\]
where 
\[
\bfu(N) := \sum_{\substack{ j_p = 1 \vvirg m_p \\ p=1,2,3} } w^1_{j_1, j_2,j_3} \otimes w^2_{j_1,j_2,j_3} \otimes w^3 = \bfu_{2}(N) \otimes w^3\in W_1 \otimes W_2 \otimes W_3.
\]

We will denote the elements of $\calZ_\eps$ using indices $\{ -1, (0,0,0) \vvirg (n_1,n_2,n_3)\}$; note that these are $(n_1+1)(n_2+1)(n_3+1) + 1$ elements. We drop the dependency on $\eps$ from the notation.

For $p=1 ,2,3$ and $j_p = 1 \vvirg m_p$, define
\[
 Z_{j_1,j_2,j_3}= (v^1_{j_1} + \eps w^1_{j_1,j_2,j_3} ) \otimes (v^2_{j_2} + \eps w^2_{j_1,j_2,j_3} ) \otimes (v^3_{j_3} + \eps w^3). 
\]
Write $\bfZ_1 = \sum _{j_1,j_2,j_3} Z_{j_1,j_2,j_3}$ for the tensor obtained as sum of the $m_1m_2m_3 = \frac{n_1-1}{2}(n_2-1)(n_3-1)$ rank-one tensors defined above. The component of degree $3$ (with respect to $\eps$) in $\bfZ_1$ is exactly $\bfu(N)$.

For $j_1 = 1 \vvirg m_1$ (so that $m_1+j_1 = m_1+1 \vvirg n_1-1)$, $j_2 = 1 \vvirg m_2$ and $j_3 = 1 \vvirg m_3$, define
\[
 Z_{m_1 + j_1,j_2,j_3} = (v^1_{m_1 + j_1} + \eps w^1_{j_1,j_2,j_3} ) \otimes (v^2_{j_2} - \eps w^2_{j_1,j_2,j_3}) \otimes v^3_{j_3}.
\]
Let $\bfZ_{110}$ be the sum of the tensors just defined.

For $k_1 = 1 \vvirg m_1$, and for $k_2 = 1 \vvirg m_2$ define the two sets of tensors
\begin{align*}
Z_{n_1,k_2,0} &= (v^1_{n_1} + \eps \sum_{\substack{p = 1,3 \\ j_p = 1 \vvirg m_p}} w^1_{j_1 ,k_2, j_3} ) \otimes v^2_{k_2} \otimes ( v^3_0 - \eps w^3), \\
 Z_{k_1,n_2,0} &= v^1_{k_1} \otimes ( v^2_{n_2} + \eps \sum_{\substack{p = 2,3 \\ j_p = 1 \vvirg m_p}} w^1_{k_1, j_2 ,j_3 } )  \otimes ( v^3_0 - \eps w^3) ,
\end{align*}
consisting, respectively, of $n_2-1$ and $\frac{n_1-1}{2}$ rank-one tensors. Write $\bfZ_{101}$ and $\bfZ_{011}$ for the sum of the first and second sets of tensors just defined.

Now, the component of degree $2$ in $\bfZ_1$ is opposite to the component of degree $2$ in $\bfZ_{110} + \bfZ_{101} + \bfZ_{011}$. Let $S = \bfZ_1 + \bfZ_{110} + \bfZ_{101} + \bfZ_{011}$. We deduce that the component of degree $2$ in $S$ is $0$.

Therefore $S$ can be written as $S = S_0 + \eps S_1 + \eps^3 \bfu(N)$ and 
\begin{align*}
 S_1 = & \sum_{\substack{i_1 =1 \vvirg n_1 \\ i_2 = 1\vvirg n_2}} v^1_{i_1} \otimes v^2_{i_2} \otimes \omega^3_{i_1,i_2} +  \sum_{\substack{i_1 =1 \vvirg n_1 \\ i_3 = 1\vvirg n_3}} v^1_{i_1} \otimes \omega^2_{i_1,i_3} \otimes v^3_{i_3} +  \sum_{\substack{i_2 =1 \vvirg n_2 \\ i_3 = 1\vvirg n_3}} \omega^1_{i_1,i_2} \otimes v^2_{i_2} \otimes v^3_{i_3} ,\\
\end{align*}
for some vectors $\omega^1_{i_2,i_3} \in W_1$, $\omega^2_{i_1,i_3} \in W_2$ and $\omega^3_{i_1,i_2} \in W_3$.

Define
\begin{align*}
Z_{0,i_2,i_3} = (v^1_0  - \eps \omega^1_{i_2,i_3}) \otimes v^2_{i_2} \otimes v^3_{i_3} ,\\
Z_{i_1,0,i_3} = v^1_{i_1}   \otimes (v^2_{0} - \eps \omega^2_{i_1,i_3})\otimes v^3_{i_3}, \\
Z_{i_1,i_2,0} = v^1_{i_1}  \otimes v^2_{i_2} \otimes (v^3_{0} - \eps \omega^3_{i_1,i_2}).\\
\end{align*}
Let $\bfZ_{0,0,0}$ be the sum of these three families of tensors. Then $S + \bfZ_{0,0,0} = R + \eps^3 \bfu(N)$ for some tensor $R$ not depending on $\eps$: in particular, if $\Phi \subseteq [0,n_1] \times [0,n_2] \times [0,n_3]$ is the subset of indices $(i_1,i_2,i_3)$ for which a tensor $Z_{i_1,i_2,i_3}$ has been defined, then $R = \sum_{(i_1,i_2,i_3) \in \Phi} Z_{i_1,i_2,i_3}|_{\eps = 0}$.

Let $\Omega \subseteq [0,n_1] \times [0,n_2] \times [0,n_3] $ be the set of all the triples $(i_1,i_2,i_3)$ for which a tensor $Z_{i_1,i_2,i_3}$ has not yet been defined; in other words $\Omega$ is the complement of $\Phi$. For $(i_1,i_2,i_3) \in \Omega$, let $Z_{i_1,i_2,i_3} = v^1_{i_1} \otimes v^2_{i_2} \otimes v^3_{i_3}$.

Finally, define $Z_{-1} = (\textsum_{0}^{n_1} v^1_{i_1} ) \otimes (\textsum_{0}^{n_2} v^2_{i_2} ) \otimes (\textsum_{0}^{n_3} v^3_{i_3} )$. Note 
\[
Z_{-1} = \sum_{(i_1,i_2,i_3) \in [0,n_1] \times [0,n_2] \times [0,n_3] } v^1_{i_1} \otimes v^2_{i_2} \otimes v^3_{i_3} 
\]
equals the sum over the indices of $\Phi$ and of $\Omega$.

Let $\calZ_\eps = \{ Z_{-1}, Z_{0,0,0} \vvirg Z_{n_1,n_2,n_3} \}$: then $\calZ_\eps$ has $(n_1+1)(n_2+1)(n_3+1)+1$ elements. Let $E_\eps = \langle \calZ_\eps \rangle \subseteq (V_1\oplus W_1) \otimes (V_2\oplus W_2) \otimes (V_3\oplus W_3)$ and $E_0 = \lim_{\eps \to 0} E_\eps$ where the limit is taken in the corresponding Grassmannian.

We show that $(T_1 \oplus T_2) (V_4^* \oplus W_4^*) \subseteq E_0$ (and in fact equality holds).

For every $(i_1,i_2,i_3)$, we have $Z_{i_1,i_2,i_3}|_{\eps = 0} = v^1_{i_1} \otimes v^2_{i_2} \otimes v^3_{i_3}$. This shows $V_1 \otimes V_2 \otimes V_3 \subseteq E_0$.

Moreover $\eps^3 \bfu(N) = \sum_{i_1,i_2,i_3} Z_{i_1,i_2,i_3} - Z_{-1}$, therefore $\bfu(N) \in E_\eps$ for every $\eps$, hence $\bfu(N) \in E_0$.

This shows $(T_1 \oplus T_2) (V_4^* \oplus W_4^*) \subseteq E_0$ and concludes the proof.
\end{proof}

\subsection*{Construction 2: Adding a matrix, II}

Construction 1 does not apply in the case where the weights of the edges are $2$. Construction 2 addresses this setting in a particular case. Fix $a \geq 2$. Consider the two tensors
\[
   T_1 \quad = \qquad \begin{minipage}{.3\textwidth}
       \begin{tikzpicture}[scale=1]
\draw (0,1.5)-- (0,0);
\draw (0,0)-- (1.5,0);
\draw (0,0)-- (-1,-1);
\draw[fill=black] (1.5,0) circle (.1cm);
\draw[fill=black] (0,1.5) circle (.1cm);
\draw[fill=black] (0,0) circle (.1cm);
\draw[fill=black] (-1,-1) circle (.1cm);
\draw (.75,0) node[anchor=north] {$2$};
\draw (0,.75) node[anchor=east] {$2$};
\draw (-.5,-.5) node[anchor=south east] {$a+2$};
\draw (-1,-1) node[anchor=north east] {$V_3$};
\draw (1.55,0) node[anchor=west] {$V_1$};
\draw (0,1.55) node[anchor=south] {$V_2$};
\draw (0,-.1) node[anchor=south east] {$V_4$};
\end{tikzpicture}
      \end{minipage}
      \qquad
        T_2 \quad = \qquad \begin{minipage}{.3\textwidth}
       \begin{tikzpicture}[scale=1]
\draw (0,1.5)-- (1.5,0);
\draw[fill=black] (1.5,0) circle (.1cm);
\draw[fill=black] (0,1.5) circle (.1cm);
\draw[fill=black] (0,0) circle (.1cm);
\draw[fill=black] (-1,-1) circle (.1cm);
\draw (.7,.7) node[anchor=south west] {$a$};
\draw (-1,-1) node[anchor=north east] {$W_3$};
\draw (1.55,0) node[anchor=west] {$W_1$};
\draw (0,1.55) node[anchor=south] {$W_2$};
\draw (0,-.1) node[anchor=south east] {$W_4$};
\end{tikzpicture}
      \end{minipage}
\]

The result and its proof are similar to Theorem \ref{thm: matrix half size}:
\begin{theorem}\label{thm: absorb big leg}
 Let $a \geq 2$. Then
 \begin{align*}
\uR(T_1)&=4(a+2),\\ 
\uR(T_2)&= a,
\end{align*}
and
\[
 \uR(T_1 \oplus T_2) = 4(a+2)+1.
\]
\end{theorem}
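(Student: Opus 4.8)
The plan is to follow the proof of Theorem~\ref{thm: matrix half size} essentially verbatim, adjusting only the combinatorial layout of the perturbed rank-one tensors to the more constrained geometry at hand (two legs of weight $2$). Write $V_1 = V_2 = \bbC^2$ with bases $\{v^1_0, v^1_1\}$ and $\{v^2_0, v^2_1\}$, $V_3 = \bbC^{a+2}$ with basis $\{v^3_0 \vvirg v^3_{a+1}\}$, $V_4 = \bbC^{4(a+2)}$ with basis $\{v^4_{i_1,i_2,i_3}\}$, and $W_1 = W_2 = \bbC^a$ with bases $\{w^1_j\}_{j=1}^a$, $\{w^2_j\}_{j=1}^a$, $W_3 = \langle w^3\rangle$, $W_4 = \langle w^4\rangle$, so that $T_2 = \bfu_2(a) \otimes w^3 \otimes w^4$. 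The values $\uR(T_1) = 4(a+2)$ and $\uR(T_2) = a$ are immediate ($T_1$ is a \emph{spider}, the unique concise tensor in its space, with $\uR(T_1) = \dim V_4$; and $T_2$ is a rank-$a$ matrix tensored with trivial legs), while $\uR(T_1 \oplus T_2) \geq 4(a+2) + 1$ follows from conciseness of $T_1 \oplus T_2$, since $\dim(V_4 \oplus W_4) = 4(a+2) + 1$ is its largest factor dimension.

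For the upper bound I would, exactly as in Theorem~\ref{thm: matrix half size}, exhibit a family $\calZ_\eps$ of $4(a+2) + 1$ rank-one tensors in $(V_1 \oplus W_1) \otimes (V_2 \oplus W_2) \otimes (V_3 \oplus W_3)$ whose limiting span $E_0$ contains
\[
(T_1 \oplus T_2)(V_4^* \oplus W_4^*) \;=\; V_1 \otimes V_2 \otimes V_3 \;\oplus\; \langle \bfu(a)\rangle, \qquad \bfu(a) := \textsum_{j=1}^a w^1_j \otimes w^2_j \otimes w^3,
\]
which has dimension exactly $4(a+2)+1$; Proposition~\ref{prop: rank and border rank via image of flattening} then yields the desired bound. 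The family will consist of a ``diagonal'' layer $Z_{0,0,j} = (v^1_0 + \eps w^1_j) \otimes (v^2_0 + \eps w^2_j) \otimes (v^3_j + \eps w^3)$ for $j = 1 \vvirg a$, whose degree-$3$ part in $\eps$ is $\bfu(a)$ and whose degree-$0$ part is $\textsum_j v^1_0 \otimes v^2_0 \otimes v^3_j$; a collection of sign-corrected rank-one tensors $Z_{i_1,i_2,i_3}$ indexed by the remaining triples in $\{0,1\}^2 \times \{0 \vvirg a+1\}$, each reducing at $\eps = 0$ to $v^1_{i_1} \otimes v^2_{i_2} \otimes v^3_{i_3}$; and a single rank-one tensor $Z_{-1} = (\textsum_{i_1} v^1_{i_1}) \otimes (\textsum_{i_2} v^2_{i_2}) \otimes (\textsum_{i_3} v^3_{i_3})$. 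The correction tensors are to be chosen so that, across the whole family, all contributions of $\eps$-degree $1$ and $2$ cancel, leaving $\textsum_{(i_1,i_2,i_3)} Z_{i_1,i_2,i_3} = Z_{-1} + \eps^3 \bfu(a)$. This shows $\bfu(a) \in E_\eps$ for $\eps \neq 0$, hence $\bfu(a) \in E_0$, and since $Z_{i_1,i_2,i_3}|_{\eps=0} = v^1_{i_1} \otimes v^2_{i_2} \otimes v^3_{i_3}$ also $V_1 \otimes V_2 \otimes V_3 \subseteq E_0$; a dimension count then gives equality and closes the argument.

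Concretely, I expect the correction layer to split, as in Theorem~\ref{thm: matrix half size}, into a sublayer killing the degree-$2$ junk of the diagonal layer — families such as $(v^1_1 + \eps w^1_j) \otimes (v^2_1 - \eps w^2_j) \otimes v^3_j$ together with $w^3$-carrying terms like $v^1_0 \otimes v^2_0 \otimes (v^3_0 - \eps a\, w^3)$ — followed by a sublayer killing the resulting degree-$1$ junk, using the index families $(1,0,\ast)$, $(0,1,\ast)$ and the leftover triples; here the two spare basis vectors $v^3_0$ and $v^3_{a+1}$ of $V_3$ play the role that the ``upper halves'' of the index ranges played in Construction~1. The main obstacle is precisely this bookkeeping: arranging the perturbations so that every intermediate-degree term in $\eps$ cancels, each tensor stays rank one, each index triple is used exactly once (so the total count is exactly $4(a+2)+1 = \dim(V_1 \otimes V_2 \otimes V_3) + 1$), and the cascade of cancellations closes. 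The cramped $2 \times 2 \times (a+2)$ shape leaves far less slack than in Construction~1 — this is why the statement requires both legs to have weight $2$ and why it is treated as a separate construction rather than as a specialization of Theorem~\ref{thm: matrix half size}.
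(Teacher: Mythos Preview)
Your plan is exactly the paper's approach, and all the conceptual ingredients (conciseness lower bound, Proposition~\ref{prop: rank and border rank via image of flattening}, a diagonal layer whose $\eps^3$-term is $\bfu(a)$, correction layers forcing $\sum Z_{i_1,i_2,i_3} = Z_{-1} + \eps^3\bfu(a)$) are right. What you have left undone is the thing you yourself flag as the obstacle: actually writing down the correction tensors and checking the cancellation, which is the entire content of the proof.

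For comparison, the paper's explicit layout (with $V_1,V_2$ indexed by $\{1,2\}$ and $V_3$ by $\{-1,0,1,\dots,a\}$) is: for $i=1,\dots,a$,
\[
\begin{aligned}
Z_{1,1,i} &= (v^1_1+\eps w^1_i)\otimes(v^2_1+\eps w^2_i)\otimes(v^3_i+\eps w^3),\\
Z_{1,2,i} &= (v^1_1+\eps w^1_i)\otimes(v^2_2-\eps w^2_i)\otimes v^3_i,\\
Z_{2,1,i} &= (v^1_2-\eps w^1_i)\otimes v^2_1\otimes v^3_i,\qquad
Z_{2,2,i} = (v^1_2-\eps w^1_i)\otimes v^2_2\otimes v^3_i,
\end{aligned}
\]
together with, at the two spare $V_3$-indices,
\[
\begin{aligned}
Z_{1,1,-1}&= v^1_1\otimes\bigl(v^2_1+\tfrac{2\eps}{a}\textsum_j w^2_j\bigr)\otimes\bigl(v^3_{-1}-\tfrac{a\eps}{2}w^3\bigr),\quad
Z_{1,2,-1}= v^1_1\otimes\bigl(v^2_2-\tfrac{2\eps}{a}\textsum_j w^2_j\bigr)\otimes v^3_{-1},\\
Z_{1,1,0}&= \bigl(v^1_1+\tfrac{2\eps}{a}\textsum_j w^1_j\bigr)\otimes v^2_1\otimes\bigl(v^3_0-\tfrac{a\eps}{2}w^3\bigr),\quad
Z_{2,1,0}= \bigl(v^1_2-\tfrac{2\eps}{a}\textsum_j w^1_j\bigr)\otimes v^2_1\otimes v^3_0,
\end{aligned}
\]
and the remaining four $Z_{\bullet,\bullet,-1}$, $Z_{\bullet,\bullet,0}$ unperturbed. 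Note in particular that your sample $w^3$-carrying term $v^1_0\otimes v^2_0\otimes(v^3_0-\eps a\,w^3)$ has no $\eps^2$-component, so it cannot cancel the degree-$2$ cross-terms $\sum_j(w^1_j\otimes v^2_0+v^1_0\otimes w^2_j)\otimes w^3$ coming from the diagonal layer; the paper instead pairs a $\tfrac{2\eps}{a}\sum_j w^p_j$ perturbation on a $V_1$- or $V_2$-factor with a $-\tfrac{a\eps}{2}w^3$ perturbation on the $V_3$-factor, so that their product lands in degree $2$ with the required sign. Once you make this adjustment your sketch goes through verbatim.
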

\begin{proof}
Let $V_1 = V_2 = \bbC^2$, $V_3 = \bbC^{a+2}$ and $V_4 = \bbC^{4(a+2)}$ so that $T_1 \in V_1 \ootimes V_4$. For $p=1,2$, let $\{ v^p_1, v^p_2\}$ be a basis of $V_p$ and let $\{ v^3_j : j =-1 \vvirg a\}$ be a basis of $V_3$. 

Similarly, let $W_1 = W_2 = \bbC^a$, $W_3 = W_4 = \bbC^1$, so that $T_2 \in W_1 \ootimes W_4$. For $p = 1,2$, let $\{ w^p_\ell : \ell = 1 \vvirg a\}$ be a basis of $W_p$ and let $\{ w^3 \}$ be a basis of $W_3$.

Regard $T_1 \oplus T_2$ as a tensor in $(V_1 \oplus W_1) \ootimes (V_4 \oplus W_4)$.  

The values of $\uR(T_1)$ and $\uR(T_2)$ are immediate. The lower bound $\uR(T_1 \oplus T_2) \geq 4(a+2)+1$ follows by conciseness.

For the upper bound, we determine a set of $4(a+2)+1$ rank-one elements $\calZ_\eps \subseteq (V_1 \oplus W_1) \otimes (V_2 \oplus W_2) \otimes (V_3 \oplus W_3)$ such that $T(V_4^* \oplus W_4^*) \subseteq \lim \langle \calZ_\eps \rangle$. By Proposition \ref{prop: rank and border rank via image of flattening}, this provides the desired upper bound.

Note 
\[
 T(V_4^* \oplus W_4^*) = V_1 \otimes V_2 \otimes V_3 + \langle \bfu(a) \rangle
\]
where, as in the proof of Theorem \ref{thm: matrix half size}, $\bfu(a) = \sum_1^a w^1_j \otimes w^2_j \otimes w^3 = \bfu_2(a) \otimes w^3$.

We will denote elements of $\calZ_\eps$ using indices $\{ -1, (1,1,-1) \vvirg (2,2,a)\}$. We drop the dependency from $\eps$ in the notation.

Define the following tensors:
\begin{align*}
& Z_{1,1,i} = ( v^1_1 + \eps  w^1_i  ) \otimes ( v^2_1 + \eps w^2_i ) \otimes  ( v^3_i + \eps w^3 ) &\quad \text{for $i=1 \vvirg a$}, \\
& Z_{1,2,i} = ( v^1_1 + \eps  w^1_i  ) \otimes ( v^2_2  - \eps w^2_i) \otimes   v^3_i  &\quad \text{for $i=1 \vvirg a$}, \\
& Z_{2,1,i} = ( v^1_2 - \eps  w^1_i  ) \otimes  v^2_1  \otimes v^3_i  &\quad \text{for $i=1 \vvirg a$}, \\
& Z_{2,2,i} = ( v^1_2 - \eps  w^1_i  ) \otimes  v^2_2  \otimes v^3_i  &\quad \text{for $i=1 \vvirg a$}, \\
 ~\\
& Z_{1,1,-1} =  v^1_1  \otimes  ( v^2_1 + \textfrac{2\eps}{a} \textsum_1^a w^2_j) \otimes (v^3_{-1} - \textfrac{a\eps}{2} w^3), \\
& Z_{1,1,0} = ( v^1_1 + \textfrac{2\eps}{a} \textsum_1^a w^1_j) \otimes   v^2_1  \otimes (v^3_{0} - \textfrac{a\eps}{2} w^3) ,\\
~\\
&Z_{1,2,-1} = v^1_1  \otimes  ( v^2_2 - \textfrac{2\eps}{a} \textsum_1^a w^2_j) \otimes v^3_{-1},\\
&Z_{2,1,0} = ( v^1_2 - \textfrac{2\eps}{a} \textsum_1^a w^1_j) \otimes   v^2_1  \otimes v^3_{0} ,\\
~\\
&Z_{2,1,-1} = v^1_2 \otimes v^2_1 \otimes v^3_{-1} \qquad \qquad Z_{1,2,0} = v^1_1 \otimes v^2_2 \otimes v^3_{0} , \\
&Z_{2,2,-1} = v^1_2 \otimes v^2_2 \otimes v^3_{-1} \qquad \qquad Z_{2,2,0} = v^1_2 \otimes v^2_2 \otimes v^3_{0}  .\\
\end{align*}
Finally, let $Z_{-1} = (v^1_1 + v^1_2) \otimes (v^2_1+v^2_2) \otimes (\textsum_{-1}^a v^3_i)$.

A direct calculation shows that $\sum_{(i_1,i_2,i_3) \in \{ (1,1,-1) \vvirg (2,2,a)\}} Z_{i_1,i_2,i_3} = Z_{-1} + \eps^3 \bfu(a)$, similarly to the proof of Theorem \ref{thm: matrix half size}.  

Let $\calZ_\eps =  \{ Z_{-1} , Z_{1,1,-1} \vvirg Z_{2,2,a}\} \subseteq (V_1 \oplus W_1) \otimes (V_2 \otimes W_2) \otimes (V_3 \otimes W_3)$: then $\calZ_\eps$ contains $4a+1$ elements. Let $E_\eps = \langle \calZ_\eps \rangle$ and $E_0 = \lim_{\eps \to 0} E_\eps$, where the limit is taken in the corresponding Grassmannian.

We show that $(T_1 \oplus T_2) (V_4^* \oplus W_4^*) \subseteq E_0$ (and in fact equality holds).

For every $(i_1,i_2,i_3) \in \{ (1,1,-1) \vvirg (2,2,a)\}$, we have $Z_{i_1,i_2,i_3}|_{\eps = 0} = v^1_{i_1} \otimes v^2_{i_2} \otimes v^3_{i_3}$. This shows $V_1 \otimes V_2 \otimes V_3 \subseteq E_0$.

Moreover $\eps^3 \bfu(a) = \sum_{i_1,i_2,i_3} Z_{i_1,i_2,i_3} - Z_{-1}$, therefore $\bfu(N) \in E_\eps$ for every $\eps$, hence $\bfu(N) \in E_0$.

This shows $(T_1 \oplus T_2) (V_4^* \oplus W_4^*) \subseteq E_0$ and concludes the proof.
\end{proof}

\subsection*{Construction 3: Adding a matrix, III}

This third construction deals with tensors of any order. Furthermore, for large dimensions, it provides an upper bound which improves on the trivial additive upper bound by a factor of $2$, as in Sch\"onhage's construction, unlike Constructions 1 and 2 which provide a saving of a factor of $3/2$ and $5/4$ respectively.

Fix $d \geq 2$ and $n_1 \vvirg n_d$. Let $N \leq n_1 \cdots n_d$. Consider the following two tensors:
\begin{equation}\label{eqn: tensors many legs}
   T_1 \quad = \qquad \begin{minipage}{.3\textwidth}
       \begin{tikzpicture}[scale=1]
\draw (0,1.5)-- (0,0);
\draw (0,0)-- (1.5,0);
\draw[fill=black] (1.5,0) circle (.1cm);
\draw[fill=black] (0,1.5) circle (.1cm);
\draw[fill=black] (0,0) circle (.1cm);
\draw (.75,0) node[anchor=north] {$n_1$};
\draw (0,.75) node[anchor=east] {$n_2$};
\draw (0,-.75) node[anchor=west] {$n_d$};
\draw (1.55,0) node[anchor=west] {$V_1$};
\draw (0,1.55) node[anchor=south] {$V_2$};
\draw (0,-.1) node[anchor=south east] {$V_{d+1}$};
\draw (-1.5,.05) node[anchor=south east] {$V_3$};
\draw (.05,-1.5) node[anchor=north west] {$V_d$};
\draw (0,0)-- (-1.5,0);
\draw (0,0)-- (0,-1.5);
\draw (0,0)-- (-1.47115  ,-0.2926);
\draw (0,0)-- (-1.3858 , -0.5740);
\draw (0,0)-- (-0.2926, -1.47115);
\draw (0,0)-- (-0.5740, -1.3858);
\draw[fill=black] (-1.5,0) circle (.08cm);
\draw[fill=black] (0,-1.5) circle (.08cm);
\draw[fill=black] (-1.47115  ,-0.2926) circle (.08cm);
\draw[fill=black] (-0.2926,-1.47115  ) circle (.08cm);
\draw[fill=black] (-1.3858 , -0.5740) circle (.08cm);
\draw[fill=black] ( -0.5740,-1.3858) circle (.08cm);
\draw (-1,-1) node[anchor= south west] {$\ddots$};
\end{tikzpicture}
      \end{minipage}
      \qquad
        T_2 \quad = \qquad \begin{minipage}{.3\textwidth}
       \begin{tikzpicture}[scale=1]
\draw (0,1.5)-- (1.5,0);
\draw[fill=black] (1.5,0) circle (.1cm);
\draw[fill=black] (0,1.5) circle (.1cm);
\draw[fill=black] (0,0) circle (.1cm);
\draw (.7,.7) node[anchor=south west] {$N$};
\draw (1.55,0) node[anchor=west] {$W_1$};
\draw (0,1.55) node[anchor=south] {$W_2$};
\draw (0,0) node[anchor=south] {$W_{d+1}$};
\draw (-1.5,.05) node[anchor=south east] {$W_3$};
\draw (.05,-1.5) node[anchor=north west] {$W_d$};
\draw[fill=black] (-1.5,0) circle (.08cm);
\draw[fill=black] (0,-1.5) circle (.08cm);
\draw[fill=black] (-1.47115  ,-0.2926) circle (.08cm);
\draw[fill=black] (-0.2926,-1.47115  ) circle (.08cm);
\draw[fill=black] (-1.3858 , -0.5740) circle (.08cm);
\draw[fill=black] ( -0.5740,-1.3858) circle (.08cm);
\draw (-1.3,-1.3) node[anchor= south west] {$\ddots$};
\end{tikzpicture}
      \end{minipage}
\end{equation}

For the sake of notation, we state and prove the following result in the special case $n := n_1 = \cdots = n_d$. A similar upper bound holds in general.
\begin{theorem}\label{thm: matrix on many legs}
Let $n , N,d \in \bbN$ be integers with $N \leq n^d$. Let $T_1 , T_2$  be as in \eqref{eqn: tensors many legs}. Then
 \begin{align*}
\uR(T_1)&= n^d,\\ 
\uR(T_2)&= N,
\end{align*}
and
\[
 \uR(T_1 \oplus T_2) \leq n^d + 2n^{d-1} + n^2(n+1)^{d-3} + 1 = n^d + \calO(n^{d-1}).
\]
\end{theorem}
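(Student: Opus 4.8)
\emph{Setup and reduction.} Write $V_p=\bbC^n$ with basis $\{v^p_i:i=1\vvirg n\}$ for $p=1\vvirg d$ and $V_{d+1}=\bbC^{n^d}$ with basis $\{v^{d+1}_\bfi:\bfi\in[n]^d\}$, so that $T_1=\sum_\bfi v^1_{i_1}\ootimes v^d_{i_d}\otimes v^{d+1}_\bfi$. For $T_2$ take $W_1=W_2=\bbC^N$ with bases $\{w^1_\lambda\}$ and $\{w^2_\lambda\}$ indexed by a fixed $N$-element set $\Lambda\subseteq[n]^d$, and $W_p=\langle w^p\rangle\cong\bbC^1$ for $p=3\vvirg d+1$, so that $T_2=\bfu_2(N)\otimes w^3\ootimes w^{d+1}$. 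The equalities $\uR(T_1)=n^d$ (conciseness of the spider) and $\uR(T_2)=N$ ($T_2$ is an $N\times N$ identity matrix) are immediate. Regard $T_1\oplus T_2$ as a tensor in $(V_1\oplus W_1)\ootimes(V_{d+1}\oplus W_{d+1})$; since
\[
(T_1\oplus T_2)\bigl((V_{d+1}\oplus W_{d+1})^*\bigr)=(V_1\ootimes V_d)\oplus\langle\bfu(N)\rangle,\qquad \bfu(N):=\bfu_2(N)\otimes w^3\ootimes w^d,
\]
Proposition~\ref{prop: rank and border rank via image of flattening} (applied to the last flattening) reduces the claim to the following: for $\eps\neq 0$, exhibit a set $\calZ_\eps$ of $n^d+2n^{d-1}+n^2(n+1)^{d-3}+1$ rank-one tensors in $(V_1\oplus W_1)\otimes\cdots\otimes(V_d\oplus W_d)$, linearly independent for $\eps\neq 0$, with $V_1\ootimes V_d$ and $\bfu(N)$ both in $E_0:=\lim_{\eps\to 0}\langle\calZ_\eps\rangle$.

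\emph{Construction.} I would build $\calZ_\eps$ in the style of the proof of Theorem~\ref{thm: matrix half size} and of Sch\"onhage's original argument (Section~\ref{subsec: schon}): a ``main family'' $\{Z_\alpha(\eps)\}$ together with one extra tensor $Z_{-1}:=(\textsum_i v^1_i)\ootimes(\textsum_i v^d_i)$, such that (a) a distinguished subfamily indexed by $[n]^d$ specializes at $\eps=0$ to $v^1_{i_1}\ootimes v^d_{i_d}$, hence spans $V_1\ootimes V_d\subseteq E_0$ and accounts for the leading $n^d$ tensors; and (b) the telescoping sum $\sum_\alpha Z_\alpha(\eps)-Z_{-1}$ equals $\eps^D\bfu(N)$ identically in $\eps$, for a fixed $D\geq 2$, so that $\bfu(N)=\eps^{-D}(\sum_\alpha Z_\alpha(\eps)-Z_{-1})\in\langle\calZ_\eps\rangle$ for all $\eps\neq 0$ and thus $\bfu(N)\in E_0$. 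To force (b) one perturbs the leg-$1$ and leg-$2$ factors of the distinguished subfamily by $\eps w^1_\lambda$ and $\eps w^2_\lambda$ (and the leg-$p$ factor, $p\geq 3$, by $\eps w^p$), so that the top $\eps$-degree part of the subfamily's sum is $\eps^D\bfu(N)$; the intermediate $\eps$-degrees are then cancelled by correction families, and the degree-$0$ part collapses onto $Z_{-1}$ together with a family of ``pure'' tensors $v^1_{i_1}\ootimes v^d_{i_d}$. Each correction family is indexed by a boundary stratum of the index cube --- a subset of coordinates pinned to a distinguished value, playing the role of the indices $0$ and $n_p$ in Theorem~\ref{thm: matrix half size} --- with three ``active'' legs carrying the Sch\"onhage-type lifting and the remaining $d-3$ legs indexed over an augmented set of $n+1$ values; tallying these strata is what yields the extra $2n^{d-1}+n^2(n+1)^{d-3}$ tensors.

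\emph{Main obstacle.} The crux is the combinatorial bookkeeping of the correction families so that simultaneously: every $\eps$-degree strictly between $0$ and $D$ cancels; the degree-$0$ part telescopes to $Z_{-1}$; the total cardinality of $\calZ_\eps$ is exactly $n^d+2n^{d-1}+n^2(n+1)^{d-3}+1$; and $\calZ_\eps$ is linearly independent for $\eps\neq 0$. For $d=3$ this is a direct computation closely parallel to the proof of Theorem~\ref{thm: matrix half size}, but absorbing the full matrix (there is no analogue of the ``half-size'' choice of the middle leg here). For general $d$ the intermediate $\eps$-degrees $2\vvirg d-1$ must be killed in a nested fashion along the $d-3$ ``passive'' legs, and this nesting is precisely where the factor $(n+1)^{d-3}$ appears; I expect this to be the most delicate part of the argument. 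Once the $Z_\alpha(\eps)$ are written down explicitly, linear independence is routine (specialize $\eps$ to a generic value and compare leading terms) and the inclusions $V_1\ootimes V_d\subseteq E_0$, $\bfu(N)\in E_0$ follow exactly as in Theorem~\ref{thm: matrix half size}.
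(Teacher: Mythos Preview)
Your proposal is a plausible outline but not a proof: you explicitly leave the central step---the construction of the correction families and the verification that all intermediate $\eps$-degrees cancel while the total count stays at $n^d+2n^{d-1}+n^2(n+1)^{d-3}+1$---undone, and your own ``Main obstacle'' paragraph acknowledges this. With the uniform perturbation $v^p_{i_p}+\eps w^p_{\cdot}$ you suggest, the degree-$k$ part of the sum over $[n]^d$ (for $1\leq k\leq d-1$) does not collapse automatically; forcing the telescoping identity for general $d$ would require a nested system of correction families whose design and counting you have not supplied. So as it stands there is a genuine gap.

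The paper's proof takes a different route that sidesteps exactly this obstacle. It does \emph{not} use Proposition~\ref{prop: rank and border rank via image of flattening}; instead it writes down a border rank decomposition of $T_1\oplus T_2$ directly in the full space $(V_1\oplus W_1)\ootimes(V_{d+1}\oplus W_{d+1})$, including the $(d{+}1)$st factor. The key idea you are missing is an \emph{asymmetric} $\eps$-weighting with a role reversal on the passive legs: on factors $1,2$ the paper uses $v+\eps^{d-1}w$; on factors $3,\ldots,d$ it uses $\eps v+w$ (note the swap); on factor $d{+}1$ it uses $\eps^d v+w$. With this choice $T_1\oplus T_2$ is the coefficient of $\eps^{2d-2}$ in $Q(\eps)=\sum_\bfi q_\bfi(\eps)$, and the sum of all lower-degree terms has small rank \emph{without any cancellation}: degrees $0$ through $d-2$ together form a single rank-one tensor because the constant parts $w^3,\ldots,w^{d+1}$ factor out; the remaining degrees split into a piece over $W_{d+1}$ of rank $\leq 2n^{d-1}$ and a piece over $V_{d+1}$ of rank $\leq\sum_{k}\binom{d-3}{k}n^{k+2}=n^2(n+1)^{d-3}$. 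Subtracting these low-rank pieces and dividing by $\eps^{2d-2}$ gives the bound. Thus the paper replaces your ``telescoping plus corrections'' mechanism by a weighting that makes the unwanted terms \emph{already} low-rank, which is precisely why the factor $(n+1)^{d-3}$ appears so cleanly.
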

\begin{proof}
We prove the result for $N = n^d$. The general result follows by semicontinuity of border rank.

For $p = 1 \vvirg d$, let $V_p = \bbC^{n}$ and $\{ v^p_{i_p} : i_p = 1 \vvirg n\}$ be a basis of $V_p$. Let $V_{d+1} = \bbC^{n^d}$, with basis $\{ v^{d+1}_{i_1 \vvirg i_d} : i_p = 1 \vvirg n\}$. Let $W_1 = W_2 = \bbC^N$ with basis $\{ w^1_{j_1 \vvirg j_d} : j_p = 1 \vvirg n\}$ of $W_1$ and similarly for $W_2$. For $p = 3 \vvirg d+1$, let $W_p = \bbC^1$ and let $w^p$ be a spanning vector of $W_p$. 

Regard $T_1 \oplus T_2$ as a tensor in $(V_1 \oplus W_1) \ootimes (V_{d+1} \oplus W_{d+1})$.

The values of $\uR(T_1)$ and $\uR(T_2)$ are immediate.

We present a border rank decomposition of $T_1 \oplus T_2$ providing the desired upper bound. 

For $i_1 \vvirg i_d$, define
\begin{align*}
 q_{i_1 \vvirg i_d}(\eps) = & ( v^1_{i_1} +\eps^{d-1} w^1_{i_1 \vvirg i_d}) \otimes  ( v^2_{i_2} +\eps^{d-1} w^2_{i_1 \vvirg i_d}) \otimes \\
 & (\eps v^3_{i_3} + w^3) \ootimes (\eps v^{d}_{i_d} + w^d)  \otimes (\eps^d v^{d+1}_{i_1 \vvirg i_d} + w^{d+1}).
\end{align*}

Define $Q(\eps) = \sum_{i_1 \vvirg i_d} q_{i_1 \vvirg i_d}(\eps)$ and note that $\rmR(Q(\eps)) \leq n^d$. Expand $Q(\eps)$ in $\eps$, writing $Q(\eps) = Q_0 + \eps Q_1 + \cdots + \eps^{2d-2} Q_{2d-2} + \mathit{h.o.t.}$ where $\mathit{h.o.t.}$ denotes the sum of higher order (in $\eps$) terms. 

\begin{claim}
We have $Q_{2d-2} = T_1 \oplus T_2$.  
\end{claim}
\begin{proof}[Proof of Claim \theclaim]
In each $q_{i_1 \vvirg i_d}(\eps)$, terms of degree $2d-2$ in $\eps$ arise in two possible ways: 
\begin{itemize}
 \dotitem the tensor product of all the $w$ terms, having degree $d-1$ on the first and second factor and degree $0$ on other factors;
 \dotitem the tensor product of all the $v$ terms, having degree $0$ on first and second factor degree $1$ on factors from $3$ to $d$ (total is degree $d-2$) and degree $d$ on factor $d+1$.
\end{itemize}
All other combinations have degree different from $2d-2$ and this proves the claim.
\end{proof}

We will provide the upper bound 
\[
\rmR( \textsum_0^{2d-3} \eps ^i Q_i)  \leq 2n^{d-1} + n^2(n+1)^{d-3} + 1 .
\]

\begin{claim}
Let $P(\eps) = \sum_0^{d-2} \eps^i Q_i$. Then $\rmR(P(\eps)) =1$.
\end{claim}
\begin{proof}[Proof of Claim \theclaim]
Observe $P(\eps) = \sum p_{i_1 \vvirg i_d}(\eps)$ where 
\[
  p_{i_1 \vvirg i_d}(\eps) =   v^1_{i_1} \otimes  v^2_{i_2} \otimes  (\eps v^3_{i_3} + w^3) \ootimes (\eps v^{d}_{i_d} + w^d)  \otimes  w^{d+1}.
\]
Therefore 
\[
 P(\eps) = (\textsum_{i_1} v^1_{i_1})\otimes (\textsum_{i_2}v^2_{i_2} ) \otimes (\textsum_{i_3}(\eps v^3_{i_3} + w^3 )) \ootimes ( \textsum_{i_{d-1}}(\eps v^{d}_{i_{d}} + w^{d})) \otimes w^{d+1} ,
\]
so that $\rmR(P(\eps)) = 1$.
\end{proof}

For $k=d-1\vvirg 2d-3$, write $Q_k = Q_k' + Q_k''$ where $Q'_k\in (V_1 \oplus W_1) \ootimes (V_d \oplus W_d)\otimes W_{d+1}$ and $Q_{k}''\in (V_1 \oplus W_1) \ootimes(V_d \oplus W_d)\otimes V_{d+1}$.
Note that $Q_{d-1}'' = 0$ because the component of the last factor of $q_{i_1 \vvirg i_d}$ on $V_{d+1}$ is $\eps^d v^{d+1}_{i_1 \vvirg i_{d}}$.

 \begin{claim}
 Let $P'(\eps) = \sum_{d-1}^{2d-3} \eps^{k} Q'_k$. Then $\rmR(P'(\eps)) \leq 2 n^{d-1}$.
\end{claim}
\begin{proof}[Proof of Claim \theclaim]
Observe
 \begin{align*}
  P'(\eps) &= \sum_{i_1,i_3 \vvirg i_{d}}  v^1_{i_1} \otimes \left(\eps^{d-1}   \textsum_{i_2} w^2_{i_1 \vvirg i_{d}}\right) \otimes (\eps v^3_{i_3} +w^3) \ootimes (\eps v^{d}_{i_{d}} + w^{d}) \otimes w^{d+1}  \\
  &+\sum_{i_2,i_3 \vvirg i_{d}}  \left( \eps^{d-1} \textsum_{i_1} w^1_{i_1 \vvirg i_{d}} \right) \otimes v^2_{i_2}  \otimes (\eps v^3_{i_3} +w^3) \ootimes (\eps v^{d}_{i_{d}} + w^{d}) \otimes w^{d+1}.
 \end{align*}
This gives the upper bounds $n^{d-1}$ for each one of the two summations above. Adding the two contributions together, we obtain the desired upper bound.
\end{proof}
\begin{claim}
 For every $k = 0 \vvirg d -3 $, $\rmR(Q''_{d+k}) \leq \binom{d-3}{k} n^{k+2}$.
\end{claim}
\begin{proof}[Proof of Claim \theclaim]
Every term of $Q''_{d+k}$ arises in $q_{i_1 \vvirg i_d}$ as the projection on $V_1 \otimes V_2 \otimes U_3 \ootimes U_d \otimes V_{d+1}$ where exactly $k$ among $U_3 \vvirg U_d$ are equal to the corresponding $V_j$ and the other $d-3-k$ are equal to the corresponding $W_j$. In particular, we have 
\[
 Q''_{d+k} = \sum_{\substack{|J| \subseteq \{ 3 \vvirg d\} \\ |J| = k}} \sum_{\substack{i_1, i_2 \\ (i_j = 1 \vvirg n : j \in J)}} v^1_{i_1} \otimes v^2_{i_2} \otimes \bigotimes_{j \in J} v^{j}_{i_j} \otimes \bigotimes_{j' \notin J} w^{j'} \otimes \left(\textsum_{(i_{j'} : j' \notin J)} v_{i_1 \vvirg i_k}\right).
\]
From this expression, we deduce $\rmR(Q''_{d+k}) \leq \binom{d-3}{k} n^{k+2}$.
\end{proof}

Setting $P''(\eps) = \sum_{k = d} ^{2d-3} \eps^k Q''_k$, Claim 4 provides $\rmR(P''(\eps)) \leq \sum_{\kappa = 0}^{d-3} \binom{d-3}{\kappa} n^{\kappa+2} = n^2 (n+1)^{d-3}$.

We conclude that
\begin{align*}
\rmR( \textsum_0^{2d-3} \eps ^i Q_i) = &\rmR(P(\eps)+P'(\eps)+P''(\eps))  \\ \leq & \rmR(P(\eps)) + \rmR(P'(\eps)) + \rmR(P''(\eps)) \leq 1 + 2 n^{d-1} + n^2 (n+1)^{d-3}.
\end{align*}
This concludes the proof, because $T_1 \oplus T_2 = \lim_{\eps \to 0}\frac{1}{\eps^{2d-2}}\bigl[ Q(\eps) - (P(\eps) + P'(\eps)+P''(\eps))\bigr]$, giving the upper bound on the border rank
\begin{align*}
 \uR(T_1 \oplus T_2) \leq &\uR(Q(\eps)) + \uR(P(\eps)+P'(\eps)+P''(\eps)) \leq  n^d + 1 + 2 n^{d-1} + n^2 (n+1)^{d-3}.
\end{align*}
\end{proof}

\subsection*{Construction 4: Adding a higher order tensor}

The last construction deals with tensors of order $4$. Fix integers $n_1,n_2,n_3$. For integers $a,b$ let $[a,b] = \{ a,a+1\vvirg b\}$. Consider the two tensors

\[
    T_1 \quad = \qquad \begin{minipage}{.3\textwidth}
       \begin{tikzpicture}[scale=1]
\draw (0,1.5)-- (0,0);
\draw (0,0)-- (1.5,0);
\draw (0,0)-- (-1,-1);
\draw[fill=black] (1.5,0) circle (.1cm);
\draw[fill=black] (0,1.5) circle (.1cm);
\draw[fill=black] (0,0) circle (.1cm);
\draw[fill=black] (-1,-1) circle (.1cm);
\draw (.75,0) node[anchor=north] {$n_1 +1$};
\draw (0,.75) node[anchor=east] {$n_2 +1$};
\draw (-.5,-.5) node[anchor=south east] {$n_3 +1$};
\draw (-1,-1) node[anchor=north east] {$V_3$};
\draw (1.55,0) node[anchor=west] {$V_1$};
\draw (0,1.55) node[anchor=south] {$V_2$};
\draw (0,-.1) node[anchor=south east] {$V_4$};
\end{tikzpicture}
      \end{minipage}
      \qquad
        T_2 \quad = \qquad \begin{minipage}{.3\textwidth}
       \begin{tikzpicture}[scale=1]
 \draw [rounded corners=2mm,fill=gray!50] (1.8,0)--(0,1.8)--(-1.3,-1.3)--cycle;
 \draw [rounded corners=2mm,fill=white] (1.3,0)--(0,1.3)--(-0.8,-0.8)--cycle;
\draw[fill=black] (1.5,0) circle (.1cm);
\draw[fill=black] (0,1.5) circle (.1cm);
\draw[fill=black] (0,0) circle (.1cm);
\draw[fill=black] (-1,-1) circle (.1cm);
\draw (-1.1,-1) node[anchor=north east] {$W_3$};
\draw (1.55,0) node[anchor=west] {$W_1$};
\draw (0,1.55) node[anchor=south] {$W_2$};
\draw (-.05,0) node[anchor=south] {$W_4$};
\draw (0.7,-0.8) node[anchor=west] {$M$};
\end{tikzpicture}
\end{minipage}
\]
where $M = M(n_1,n_2,n_3)$ is the maximum possible integer such that the following combinatorial independence condition holds. There exist four disjoint subsets $J, K_1,K_2,K_3$ of $[n_1] \times [n_2] \times [n_3]$, all of order $M$ such that there are three bijections $s_i : J \to K_i$ fixing the $i$-th component, in the sense that if $s_i (j_1,j_2,j_3) = (k_1 ,k_2,k_3)$  then $j_i = k_i$.

\begin{lemma}\label{lemma:best for four factors}
 Let $n_1,n_2,n_3$ be even. Then $M(n_1,n_2,n_3) = \frac{1}{4}n_1n_2n_3$.
\end{lemma}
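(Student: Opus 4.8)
The plan is to establish the two inequalities $M(n_1,n_2,n_3)\le \tfrac14 n_1n_2n_3$ and $M(n_1,n_2,n_3)\ge \tfrac14 n_1n_2n_3$ separately. The upper bound is immediate: by definition $J,K_1,K_2,K_3$ are pairwise disjoint subsets of $[n_1]\times[n_2]\times[n_3]$ of common cardinality $M$, so $4M\le n_1n_2n_3$. Hence the entire content of the lemma lies in producing an explicit configuration that attains $\tfrac14 n_1n_2n_3$, and this is the part I would carry out carefully.

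For the lower bound I would exploit the group structure of $(\mathbb Z/2)^3$. Since each $n_i$ is even, fix a bijection $[n_i]\cong \mathbb Z/2\times[m_i]$ with $m_i:=n_i/2$ (concretely, split $[n_i]$ into its lower and upper halves), and accordingly write a point of $X:=[n_1]\times[n_2]\times[n_3]$ as a pair $(\vec\varepsilon,\vec y)$ with $\vec\varepsilon\in(\mathbb Z/2)^3$ and $\vec y\in[m_1]\times[m_2]\times[m_3]$, so that the $i$-th component of $X$ corresponds to the pair $(\varepsilon_i,y_i)$. Let $H=\{(0,0,0),(1,1,1)\}$ be the diagonal subgroup of $(\mathbb Z/2)^3$, and for $i=1,2,3$ let $f_i\in(\mathbb Z/2)^3$ be the vector with $i$-th coordinate $0$ and the other two coordinates $1$. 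One checks that $f_i\notin H$ and $f_i+f_j\notin H$ for $i\ne j$, so the cosets $H,\ f_1+H,\ f_2+H,\ f_3+H$ are pairwise distinct and therefore are exactly the four cosets of $H$, partitioning $(\mathbb Z/2)^3$ into four $2$-element blocks.

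Now set $J=H\times[m_1]\times[m_2]\times[m_3]$ and $K_i=(f_i+H)\times[m_1]\times[m_2]\times[m_3]$. By the previous step these four sets partition $X$ and each has cardinality $2m_1m_2m_3=\tfrac14 n_1n_2n_3$; in particular they are pairwise disjoint and of equal size. Define $s_i:J\to K_i$ by $s_i(\vec\varepsilon,\vec y)=(\vec\varepsilon+f_i,\vec y)$: translation by $f_i$ maps $H$ bijectively onto $f_i+H$ and leaves $\vec y$ untouched, so $s_i$ is a bijection, and since the $i$-th coordinate of $f_i$ vanishes, $s_i$ changes neither $\varepsilon_i$ nor $y_i$, i.e.\ it fixes the $i$-th component of $X$ in the required sense. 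Thus $(J,K_1,K_2,K_3,s_1,s_2,s_3)$ is a valid configuration, whence $M(n_1,n_2,n_3)\ge\tfrac14 n_1n_2n_3$, and combined with the upper bound this gives equality.

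The only step that is not purely mechanical is choosing the combinatorial data on $(\mathbb Z/2)^3$: one needs a single block $J$ that can be simultaneously carried onto three pairwise disjoint blocks by maps fixing, respectively, the first, second and third coordinate. The key realization is that $J$ should be the diagonal subgroup $H$ and that $s_i$ should be translation by the unique nonzero element of $(1,1,1)+H$ with vanishing $i$-th coordinate; this is what makes the three coordinate-fixing constraints compatible at once. After that, everything reduces to routine coset bookkeeping, and the only thing to be careful about is to fix the identification $[n_i]\cong\mathbb Z/2\times[m_i]$ explicitly so that $J$, the $K_i$, and the $s_i$ are honest subsets of and maps on $[n_1]\times[n_2]\times[n_3]$.
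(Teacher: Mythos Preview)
Your proof is correct and, once unwound, yields precisely the same construction as the paper: under the identification $[n_i]\cong\mathbb Z/2\times[m_i]$ given by splitting into lower/upper halves, your $J=H\times[m_1]\times[m_2]\times[m_3]$ is the paper's $J'\sqcup J''$, your $K_i$ are the paper's $K_i'\sqcup K_i''$, and your translations $s_i$ are exactly the paper's shifts by $\pm m_j$ in the non-$i$ coordinates. The group-theoretic packaging via cosets of $H=\{(0,0,0),(1,1,1)\}\leq(\mathbb Z/2)^3$ is a cleaner way to see why the four blocks partition $X$ and why the three coordinate-fixing bijections coexist, and you also make the (trivial) upper bound $4M\le|X|$ explicit, which the paper leaves implicit.
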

\begin{proof}
 Let $m_i = \frac{1}{2} n_i$. Define
 \begin{align*}
J' &= [m_1] \times [m_2] \times [m_3] ,\\ 
K'_1 &= [m_1] \times [m_2+1,n_2] \times [m_3+1,n_3] \quad s_1 ( j_1,j_2,j_3) = (j_1 ,m_2+ j_2, m_3+ j_3),\\
K'_2 &= [m_1+1,n_1] \times [m_2] \times [m_3+1,n_3] \quad s_2 ( j_1,j_2,j_3) = (m_1+ j_1 ,j_2, m_3+ j_3),\\
K'_3 &= [m_1+1,n_1] \times [m_2+1,n_2] \times [m_3] \quad s_3 ( j_1,j_2,j_3) = (m_1+j_1 ,m_2+j_2, j_3), \\
J'' &= [m_1+1,n_1] \times [m_2+1,n_2] \times [m_3+1,n_3], \\ 
K''_1 &= [m_1+1,n_1] \times [m_2] \times [m_3] \quad s_1 ( j_1,j_2,j_3) = (j_1 ,-m_2+ j_2, -m_3+ j_3),\\
K''_2 &= [m_1] \times [m_2+1,n_2] \times [m_3] \quad s_2 ( j_1,j_2,j_3) = (-m_1+ j_1 ,j_2, -m_3+ j_3),\\
K''_3 &= [m_1] \times [m_2] \times [m_3+1,n_3] \quad s_3 ( j_1,j_2,j_3) = (-m_1+j_1 ,-m_2+j_2, j_3).
\end{align*}
The position of $J', K'_1,K'_2,K'_3$ is represented in Figure \ref{figure: cubes} as three subsets of the set $[n_1]\times [n_2]\times [n_3]$. Let $J = J' \sqcup J''$ and $K_i = K'_i \sqcup K_i''$. It is immediate to verify that this satisfies the required conditions. Moreover $\# J = \#J' + \#J'' =2 \cdot \frac{n_1}{2} \cdot \frac{n_2}{2} \cdot \frac{n_3}{2} = 2 \cdot \frac{n_1n_2n_3}{8} = \frac{n_1n_2n_3}{4}$.

 \begin{figure}[!htp]
 \includegraphics[scale=.7]{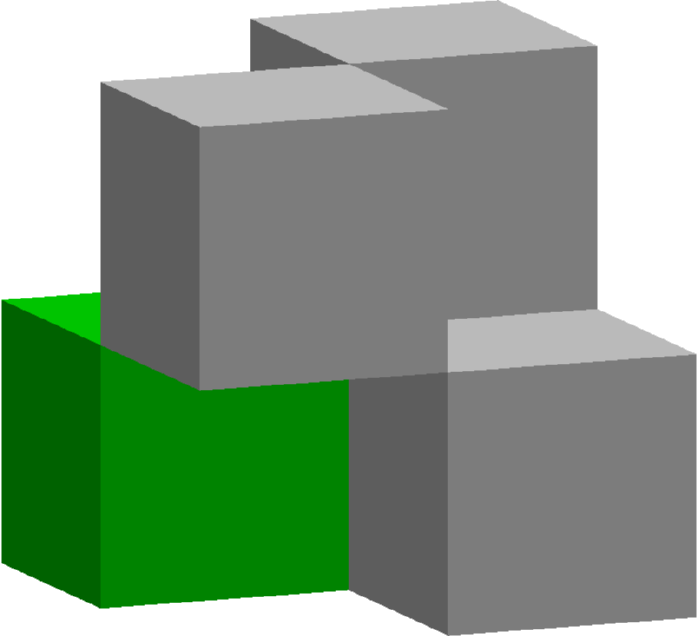} 
 \caption{Schematic representation of $J'$ (green) and $K_1',K_2',K_3'$ (gray) in the proof of Lemma \ref{lemma:best for four factors}. $J'',K_1'',K_2'',K_3''$ are represented by the three complementary cubes.}\label{figure: cubes}
 \end{figure}

\end{proof}

The proof of the following result is similar to the one of Theorem \ref{thm: matrix half size}.
\begin{theorem}\label{thm: absorbing GHZ}
Fix $n_1,n_2,n_3$ and let $M = M(n_1,n_2,n_3)$. Then 
\begin{align*}
\uR(T_1)&=(n_1+1)(n_2+1)(n_3+1),\\ 
\uR(T_2)&= M,
\end{align*}
and
\[
 \uR(T_1 \oplus T_2) = (n_1+1)(n_2+1)(n_3+1)+ 1.
\]
\end{theorem}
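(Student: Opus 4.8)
The plan is to mirror the proof of Theorem~\ref{thm: matrix half size}. Set $V_p = \bbC^{n_p+1}$ for $p=1,2,3$ with basis $\{v^p_{i_p} : i_p = 0\vvirg n_p\}$, let $V_4 = \bbC^{(n_1+1)(n_2+1)(n_3+1)}$, let $W_1 = W_2 = W_3 = \bbC^M$ with bases $\{w^p_j : j\in J\}$ indexed by the set $J$ from the definition of $M$, and let $W_4 = \bbC^1 = \langle w^4\rangle$; then $T_1\in V_1\ootimes V_4$ is the (concise) generic tensor with $T_1(V_4^*) = V_1\otimes V_2\otimes V_3$, and $T_2 = \bfu_3(M)\otimes w^4$ with $\bfu_3(M) = \textsum_{j\in J} w^1_j\otimes w^2_j\otimes w^3_j$. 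The identities $\uR(T_1) = (n_1+1)(n_2+1)(n_3+1)$ and $\uR(T_2) = \uR(\bfu_3(M)) = M$ are immediate, and conciseness of $T_1\oplus T_2\in(V_1\oplus W_1)\ootimes(V_4\oplus W_4)$ together with $\dim(V_4\oplus W_4) = (n_1+1)(n_2+1)(n_3+1)+1$ yields the lower bound. For the matching upper bound, by Proposition~\ref{prop: rank and border rank via image of flattening} it suffices to produce, for $\eps\neq 0$, a set $\calZ_\eps$ of $(n_1+1)(n_2+1)(n_3+1)+1$ rank-one tensors in $(V_1\oplus W_1)\otimes(V_2\oplus W_2)\otimes(V_3\oplus W_3)$ whose span tends to a subspace containing $(T_1\oplus T_2)(V_4^*\oplus W_4^*) = (V_1\otimes V_2\otimes V_3) + \langle\bfu_3(M)\rangle$.

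I would index $\calZ_\eps$ by $\{-1\}\cup([0,n_1]\times[0,n_2]\times[0,n_3])$, taking $Z_{-1} = (\textsum_{i_1} v^1_{i_1})\otimes(\textsum_{i_2} v^2_{i_2})\otimes(\textsum_{i_3} v^3_{i_3})$ and arranging that each triple-indexed $Z_{i_1,i_2,i_3}$ specialises at $\eps = 0$ to $v^1_{i_1}\otimes v^2_{i_2}\otimes v^3_{i_3}$, so that $V_1\otimes V_2\otimes V_3$ lies in the limit automatically and it only remains to force $\textsum_{i_1,i_2,i_3} Z_{i_1,i_2,i_3} - Z_{-1} = \eps^3\bfu_3(M)$, which then puts $\bfu_3(M)$ in every $\langle\calZ_\eps\rangle$. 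The $Z$'s are built by an $\eps$-degree cancellation cascade. First, for $j = (j_1,j_2,j_3)\in J$ put $Z_j = (v^1_{j_1}+\eps w^1_j)\otimes(v^2_{j_2}+\eps w^2_j)\otimes(v^3_{j_3}+\eps w^3_j)$; summed over $j$, the $\eps^3$-part is $\bfu_3(M)$ while the $\eps^2$- and $\eps^1$-parts are garbage supported on tensors with two and one $W$-legs respectively. Second, the $\eps^2$-garbage $\textsum_j(w^1_j\otimes w^2_j\otimes v^3_{j_3} + w^1_j\otimes v^2_{j_2}\otimes w^3_j + v^1_{j_1}\otimes w^2_j\otimes w^3_j)$ is cancelled using $K_1,K_2,K_3$ and the coordinate-fixing bijections $s_i\colon J\to K_i$: writing $s_3(j) = (k_1,k_2,j_3)$, set $Z_{s_3(j)} = (v^1_{k_1}+\eps w^1_j)\otimes(v^2_{k_2}-\eps w^2_j)\otimes v^3_{j_3}$, whose $\eps^2$-part $-w^1_j\otimes w^2_j\otimes v^3_{j_3}$ kills the corresponding term — here it is essential that $s_3$ fixes the third coordinate, so that the $v^3$-leg really is $v^3_{j_3}$ — and likewise with $s_2$ (fixing the second coordinate) and $s_1$ (fixing the first). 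Since $J,K_1,K_2,K_3$ are pairwise disjoint and contained in $[n_1]\times[n_2]\times[n_3]$, these $4M$ tensors occupy distinct triples, none using coordinate $0$. The tensors so far leave an $\eps^1$-garbage $S_1$ whose every term has exactly one $W$-leg on a definite factor over two $V$-basis legs, so $S_1$ can be written as $\textsum_{i_2,i_3}\omega^1_{i_2,i_3}\otimes v^2_{i_2}\otimes v^3_{i_3} + \textsum_{i_1,i_3}v^1_{i_1}\otimes\omega^2_{i_1,i_3}\otimes v^3_{i_3} + \textsum_{i_1,i_2}v^1_{i_1}\otimes v^2_{i_2}\otimes\omega^3_{i_1,i_2}$ with $\omega^p\in W_p$ and all $v$-indices $\geq 1$, and is cancelled exactly as in Theorem~\ref{thm: matrix half size} by $Z_{0,i_2,i_3} = (v^1_0-\eps\omega^1_{i_2,i_3})\otimes v^2_{i_2}\otimes v^3_{i_3}$, $Z_{i_1,0,i_3}$, $Z_{i_1,i_2,0}$, which use triples with a single $0$ coordinate and so collide neither with one another nor with the previous $4M$ triples. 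Every still-unused triple is assigned $Z_{i_1,i_2,i_3} = v^1_{i_1}\otimes v^2_{i_2}\otimes v^3_{i_3}$.

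Expanding $\textsum_{i_1,i_2,i_3} Z_{i_1,i_2,i_3} - Z_{-1}$ in $\eps$, the $\eps^0$-terms telescope against $Z_{-1}$, the $\eps^1$- and $\eps^2$-terms cancel by the two rounds above, and no $\eps$-power $\geq 4$ appears, leaving exactly $\eps^3\bfu_3(M)$; hence $\bfu_3(M)\in\langle\calZ_\eps\rangle$ for all $\eps\neq 0$ and thus $\bfu_3(M)\in E_0$, while $V_1\otimes V_2\otimes V_3\subseteq E_0$, so $\uR(T_1\oplus T_2)\le\#\calZ_\eps = (n_1+1)(n_2+1)(n_3+1)+1$ by Proposition~\ref{prop: rank and border rank via image of flattening}. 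I expect the crux to be making this cascade close up: one must verify that the two cancellation rounds attach their new rank-one tensors to pairwise distinct index triples hitting the intended $v$-basis vectors, and that $S_1$ genuinely has the displayed ``single free $W$-leg over a pair of $V$-basis indices'' shape so that the third round fits inside the budget of $0$-indexed triples. This is precisely where the independence condition defining $M = M(n_1,n_2,n_3)$ is used: disjointness of $J,K_1,K_2,K_3$ inside $[n_1]\times[n_2]\times[n_3]$ together with the coordinate-fixing bijections $s_i$ is exactly what makes the $\eps^2$-cancellation possible, and $M(n_1,n_2,n_3)$ is by definition the largest value for which the whole scheme goes through.
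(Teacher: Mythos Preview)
Your proposal is correct and follows essentially the same route as the paper's proof: the same flattening reduction via Proposition~\ref{prop: rank and border rank via image of flattening}, the same indexing of $\calZ_\eps$ by $\{-1\}\cup[0,n_1]\times[0,n_2]\times[0,n_3]$, the same three-stage cascade (build $\bfu_3(M)$ in degree~$3$ on~$J$, kill degree~$2$ on $K_1,K_2,K_3$ via the coordinate-fixing bijections~$s_i$, kill degree~$1$ on the single-zero triples), and the same use of $Z_{-1}$ to telescope the constant term. The only cosmetic difference is that the paper introduces auxiliary bijections $\bfj\colon J\to[1,M]$ and $\bfk_p\colon K_p\to[1,M]$ with $\bfj=\bfk_p\circ s_p$ to index the $w$-bases, whereas you index the $w$'s directly by $J$; these are equivalent bookkeeping choices.
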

\begin{proof}
For $ p =1 ,2,3$, let $V_p = \bbC^{n_p + 1}$ and $V_4 = \bbC^{(n_1+1)(n_2+1)(n_3+1)}$ so that $T_1 \in V_1 \ootimes V_4$. Let $\{ v^p_j : j = 0 \vvirg n_p\}$ be a basis of $V_p$ and let $\{ v^4_{j_1,j_2,j_3} : j_p = 0 \vvirg  n_p\}$ be a basis of $V_4$. We have $T_1 \in V_1 \ootimes V_4$.

Similarly, for $p= 1,2,3$, let $W_p = \bbC^{M}$ and $W_4 = \bbC^1$. Let $\{ w^p_\ell : \ell = 1 \vvirg M\}$ be a basis of $W_p$ and let $w^4$ be a spanning vector of $W_4$. We have $T_2 \in W_1 \ootimes W_4$. 

Regard $T_1 \oplus T_2$ as a tensor in $(V_1 \oplus W_1) \ootimes (V_4 \oplus W_4)$.

The values of $\uR(T_1)$ and $\uR(T_2)$ are immediate. The lower bound $\uR(T_1 \oplus T_2) \geq (n_1+1)(n_2+1)(n_3+1)+ 1$ follows by conciseness.
 
For the upper bound, we determine a set of $(n_1+1)(n_2+1)(n_3+1)+ 1$ rank-one elements $\calZ_\eps \subseteq (V_1 \oplus W_1) \otimes (V_2 \oplus W_2) \otimes (V_3 \oplus W_3)$ such that $T(V_4^* \oplus W_4^*) = \lim \langle \calZ_\eps \rangle$. By Proposition \ref{prop: rank and border rank via image of flattening}, this provides the desired upper bound. 

Note
\[
T(V_4^* \oplus W_4^*) = V_1 \otimes V_2 \otimes V_3 \oplus \langle \bfu_3(M) \rangle,
\]
where $\bfu_3(M) = \textsum_1^M w^{1}_\ell \otimes w^{2}_\ell \otimes w^{3}_\ell \in W_1 \otimes W_2 \otimes W_3$.

We denote the elements of $\calZ_\eps$ using indices $\{ -1, (0,0,0) \vvirg (n_1,n_2,n_3)\}$. We drop the dependency from $\eps$ in the notation.

Let $J,K_1,K_2,K_3$ be the subsets determining $M = M(n_1,n_2,n_3)$ and let $s_p : J \to K_p$ be the three fixed bijections.

Define bijections $\bfj : J \to [1 , M ]$ and $\bfk_p : K_p \to [1,M]$ for $p=1,2,3$ commuting with the fixed $s_i$'s namely $\bfj = \bfk_p \circ s_p$.

If $(j_1,j_2,j_3) \in J$, define 
 \[
Z_{j_1,j_2,j_3} = (v^1_{j_1} + \eps w^1_{\bfj(j_1,j_2,j_3)}) \otimes   (v^2_{j_2} + \eps w^2_{\bfj(j_1,j_2,j_3)}) \otimes (v^3_{j_3} + \eps w^3_{\bfj(j_1,j_2,j_3)}) 
 \]

The component of degree $3$ (with respect to $\eps$) in $\sum_{(j_1,j_2,j_3) \in J} Z_{j_1,j_2,j_3}$ is exactly $\bfu_3(M)$.

If $(k_1,k_2,k_3) \in K_1$, define
\[
 Z_{k_1,k_2,k_3} = v^1_{k_1} \otimes  (v^2_{k_2} + \eps w^2_{\bfk_1(k_1,k_2,k_3)}) \otimes (v^3_{k_3} - \eps w^3_{\bfk_1(k_1,k_2,k_3)} ).
\]

If $(k_1,k_2,k_3) \in K_2$, define
\[
 Z_{k_1,k_2,k_3} = ( v^1_{k_1} - \eps w^1_{\bfk_2(k_1,k_2,k_3)} ) \otimes v^2_{k_2}  \otimes (v^3_{k_3} + \eps w^3_{\bfk_2(k_1,k_2,k_3)} ).
\]

If $(k_1,k_2,k_3) \in K_3$, define
\[
 Z_{k_1,k_2,k_3} = ( v^1_{k_1} + \eps w^1_{\bfk_3(k_1,k_2,k_3)} ) \otimes (v^2_{k_2} - \eps w^2_{\bfk_3(k_1,k_2,k_3)})  \otimes v^3_{k_3} .
\]

The component of degree $2$ of $\sum_{(k_1,k_2,k_3) \in K_1 \sqcup K_2 \sqcup K_3} Z_{k_1,k_2,k_3}$ is opposite to the component of degree $2$ of $\sum_{(j_1,j_2,j_3) \in J} Z_{j_1,j_2,j_3}$. Indeed, the term of the form $v^1_{j_1}\otimes\eps w^2_{\bfj(j_1,j_2,j_3)}\otimes \eps w^3_{\bfj(j_1,j_2,j_3)}$ is opposite to $v^1_{k_1}\otimes\eps w^2_{\bfk_1(k_1,k_2,k_3)}\otimes(- \eps w^3_{\bfk_1(k_1,k_2,k_3)})$ for $(k_1,k_2,k_3)=s_1(j_1,j_2,j_3)$ etc. As a consequence, setting 
\[
S = \sum_{(k_1,k_2,k_3) \in K_1 \sqcup K_2 \sqcup K_3} Z_{k_1,k_2,k_3} + \sum_{(j_1,j_2,j_3) \in J} Z_{j_1,j_2,j_3}, 
\]
we deduce that the component of degree $2$ of $S$ is $0$. 

Write $S = S_0 + \eps S_1 + \eps^3 \bfu_3(M)$ and 
\begin{align*}
S_1 = & \sum_{i_1,i_2} v^1_{i_1} \otimes v^2_{i_2} \otimes \omega^3_{i_1,i_2}   + \sum_{i_1,i_3} v^1_{i_1} \otimes \omega^2_{i_1,i_3} \otimes v^3_{i_3}  + \sum_{i_2,i_3} \omega^1_{i_2,i_3} \otimes v^2_{i_2} \otimes v^3_{i_3} 
\end{align*}
where $\omega^1_{i_2,i_3} \in W_1, \omega^2_{i_1,i_3} \in W_2, \omega^3_{i_1,i_2} \in W_3$.

For $i_1 = 1\vvirg n_1$, $i_2 \in 1 \vvirg n_2$, $i_3 =1\vvirg n_3$, define
\begin{align*}
 Z_{0,i_2,i_3} &= (v^1_0 - \eps \omega^1_{i_2,i_3}) \otimes v^2_{i_2} \otimes v^3_{i_3}, \\
 Z_{i_1,0,i_3} &= v^1_{i_1} \otimes (v^2_0 - \eps \omega^2_{i_1,i_3}) \otimes v^3_{i_3}, \\
 Z_{i_1,i_2,0} &= v^1_{i_1} \otimes v^2_{i_2} \otimes (v^3_0 - \eps \omega^3_{i_1,i_2}). \\
\end{align*}
By construction, $S + \textsum_{i_2,i_3} Z_{0,i_2,i_3} + \textsum_{i_1,i_3} Z_{i_1,0,i_3} + \textsum_{i_1,i_2} Z_{i_1,i_2,0}$ is $0$ in degrees $1$ and $2$ and $\bfu_3(M)$ in degree $3$.

Define 
\[
\Omega = [0,n_1] \times [0,n_2] \times [0,n_3] \setminus ( J \sqcup K_1 \sqcup K_2 \sqcup K_3 \sqcup L) 
\]
 where $L$ is the set of triples with exactly one zero. The triples in $\Omega$ are the ones for which a rank-one tensor $Z_{i_1,i_2,i_3}$ has yet to be defined.

For every $(i_1,i_2,i_3) \in \Omega$, define $Z_{i_1,i_2,i_3} = v^1_{i_1} \otimes v^2_{i_2} \otimes v^3_{i_3}$. It is immediate to verify 
\[
 \sum_{\substack{ i_p =0 \vvirg n_p \\ p = 1,2,3}} Z_{i_1,i_2,i_3} = Z_{-1} + \eps^3 \bfu_3(M).
\]
where $Z_{-1} = (\textsum_{i_1 =0}^{n_1} v^1_{i_1}) \otimes (\textsum_{i_2 =0}^{n_2} v^2_{i_2}) \otimes (\textsum_{i_3 =0}^{n_3} v^3_{i_3})$.

Therefore
\[
 \sum_{\substack{ i_p =0 \vvirg n_p \\ p = 1,2,3}} Z_{i_1,i_2,i_3} - Z_{-1} = \eps^3 \bfu_3(M).
\]

Let $\calZ_\eps = \{ Z_{-1}, Z_{0,0,0} \vvirg Z_{n_1,n_2,n_3}\} \subseteq (V_1 \oplus W_1) \otimes (V_2 \oplus W_2) \otimes (V_3 \oplus W_3)$: then $\calZ_\eps$ has $(n_1+1)(n_2+1)(n_3+1)+1$ elements. Let $E_\eps = \langle \calZ_\eps \rangle$ and let $E_0 = \lim_{\eps \to 0} E_\eps$, where the limit is taken in the corresponding Grassmannian.

We show that $(T_1 \oplus T_2)(V_4^* \oplus W_4^*) \subseteq E_0$ (and in fact equality holds).

For every $(i_1,i_2,i_3)$ we have $Z_{i_1,i_2,i_3}|_{\eps = 0} = v^1_{i_1} \otimes v^2_{i_2} \otimes v^3_{i_3}$. This shows $V_1 \otimes V_2 \otimes V_3 \subseteq E_0$. 

Moreover, $\eps^3 \bfu_3(M) = \sum_{i_1,i_2,i_3} Z_{i_1,i_2,i_3} - Z_{-1} \in E_\eps$, therefore $\bfu_3(M) \in E_\eps$ for every $\eps$. Hence $ \bfu_3(M) \in E_0$.

This shows $(T_1 \oplus T_2)(V_4^* \oplus W_4^*) \subseteq E_0$ and concludes the proof.
\end{proof}

\section{Consequences on the exponent of certain graph tensors}\label{section: exponent}

In this section, we use Construction 2, Construction 3 and Construction 4 to obtain upper bounds on the exponent of the graph tensors obtained as Kronecker products of the tensors $T_1$ and $T_2$ (or possibly their Kronecker powers) involved in the construction. Following Sch\"onhage's technique, we use the border rank upper bound on the direct sum (Theorem \ref{thm: absorb big leg}, Theorem \ref{thm: matrix on many legs} and Theorem \ref{thm: absorbing GHZ}) and Proposition \ref{prop: bound exponent general} to determine an upper bound on the asymptotic rank, and in turn on the exponent, of certain tensors.

We benchmark our results comparing them with the trivial upper bound of \eqref{eqn: trivial asy rank}. 

\subsection{Extended matrix multiplication}

We use the result of Theorem \ref{thm: absorb big leg} to obtain an upper bound on the exponent of the tensor
 \[
\EMamu(n_1,n_2,n_3; n_4 ) = \qquad \begin{minipage}{.3\textwidth}
       \begin{tikzpicture}[scale=1]
\draw (0,1.5)-- (0,0);
\draw (0,0)-- (1.5,0);
\draw (0,0)-- (-1,-1);
\draw (0,1.5)-- (1.5,0);
\draw (.75,0) node[anchor=north] { $n_1$};
\draw (0,.75) node[anchor=east] { $n_2$};
\draw (-.5,-.5) node[anchor=south east] { $n_4$};
\draw (.7,.7) node[anchor=south west] { $n_3$};
\draw[fill=black] (1.5,0) circle (.1cm);
\draw[fill=black] (0,1.5) circle (.1cm);
\draw[fill=black] (0,0) circle (.1cm);
\draw[fill=black] (-1,-1) circle (.1cm);
\end{tikzpicture}
      \end{minipage}
\]
for some instances of $n_1 \vvirg n_4$. We call this tensor \emph{extended matrix multiplication tensor} because it can be realized as Kronecker product of the matrix multiplication tensor and a \emph{dangling} matrix; graphically: 
\[
\EMamu(n_1,n_2,n_3; n_4 ) = \qquad \begin{minipage}{.3\textwidth}
       \begin{tikzpicture}[scale=1]
\draw (0,1.5)-- (0,0);
\draw (0,0)-- (1.5,0);
\draw (0,1.5)-- (1.5,0);
\draw (.75,0) node[anchor=north] { $n_1$};
\draw (0,.75) node[anchor=east] { $n_2$};
\draw (.7,.7) node[anchor=south west] { $n_3$};
\draw[fill=black] (1.5,0) circle (.1cm);
\draw[fill=black] (0,1.5) circle (.1cm);
\draw[fill=black] (0,0) circle (.1cm);
\draw[fill=black] (-1,-1) circle (.1cm);
\end{tikzpicture}
      \end{minipage} \hspace{-1cm}
      \boxtimes \quad
      \begin{minipage}{.3\textwidth}
       \begin{tikzpicture}[scale=1]
\draw (0,0)-- (-1,-1);
\draw (-.5,-.5) node[anchor=south east] { $n_4$};
\draw[fill=black] (1.5,0) circle (.1cm);
\draw[fill=black] (0,1.5) circle (.1cm);
\draw[fill=black] (0,0) circle (.1cm);
\draw[fill=black] (-1,-1) circle (.1cm);
\end{tikzpicture}
      \end{minipage} 
\]
The upper bound from \eqref{eqn: trivial asy rank} provides 
\[
 \omega(\EMamu(n_1,n_2,n_3;n_4)) \leq \log(n_1n_2n_3n_4) = \textsum \log(n_i).
\]

The extended matrix multiplication tensor can be realized as a Kronecker product of the tensor $T_1$ and $T_2$ of Construction 1 and Construction 2; indeed
\[
\EMamu(n_1,n_2,n_3; n_4 ) = \qquad \begin{minipage}{.3\textwidth}
       \begin{tikzpicture}[scale=1]
\draw (0,1.5)-- (0,0);
\draw (0,0)-- (1.5,0);
\draw (0,0)-- (-1,-1);
\draw (.75,0) node[anchor=north] { $n_1$};
\draw (0,.75) node[anchor=east] { $n_2$};
\draw (-.5,-.5) node[anchor=south east] { $n_4$};
\draw[fill=black] (1.5,0) circle (.1cm);
\draw[fill=black] (0,1.5) circle (.1cm);
\draw[fill=black] (0,0) circle (.1cm);
\draw[fill=black] (-1,-1) circle (.1cm);
\end{tikzpicture}
      \end{minipage} \hspace{-1cm}
      \boxtimes \quad
      \begin{minipage}{.3\textwidth}
       \begin{tikzpicture}[scale=1]
\draw (0,1.5)-- (1.5,0);
\draw (.7,.7) node[anchor=south west] { $n_3$};
\draw[fill=black] (1.5,0) circle (.1cm);
\draw[fill=black] (0,1.5) circle (.1cm);
\draw[fill=black] (0,0) circle (.1cm);
\draw[fill=black] (-1,-1) circle (.1cm);
\end{tikzpicture}
      \end{minipage} 
\]
Fix $n_1 = n_2=2$, and write $T_1(n_4)$, $T_2(n_3)$ for the two tensors above: in particular 
\[
\EMamu(2,2,n_3; n_4) = T_1(n_4) \boxtimes T_2(n_3) .
\]
We are going to use the result of Theorem \ref{thm: absorb big leg} to obtain an upper bound on the exponent $\omega(\EMamu(2,2,n_3; n_4))$.

\begin{theorem}\label{thm: omega ext mamu}
Let $a \geq 0$ and let $p \in (0,1)$. Then
\[
  \omega \left( \EMamu(2 , 2 , a^{\frac{1-p}{p}}; a+2) \right) \leq \frac{1}{p} \left[ \log (4(a+2)+1) - h(p) \right]. 
\]
\end{theorem}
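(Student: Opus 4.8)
The plan is to recognize $\EMamu(2,2,a;a+2)$ as the Kronecker product $T_1 \boxtimes T_2$ of the two tensors from Construction 2, to invoke the strictly subadditive border rank bound of Theorem \ref{thm: absorb big leg}, and to feed this into Proposition \ref{prop: bound exponent general} together with the self-reproducing property of graph tensors; this mirrors exactly the way Sch\"onhage's exponent bound is recovered in Section \ref{subsec: schon}.

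First I would observe that, with $n_1 = n_2 = 2$, the spider $T_1$ of Construction 2 has legs $2, 2, a+2$ and $T_2$ is the matrix of size $a$, so that $T_1 \boxtimes T_2 = \EMamu(2,2,a;a+2)$, as already noted in the discussion preceding the theorem. Setting $r = 4(a+2)+1$, Theorem \ref{thm: absorb big leg} gives $\uR(T_1 \oplus T_2) \leq r$ for $a \geq 2$; for $a \in \{0,1\}$ the same inequality holds trivially (by subadditivity, respectively because $T_2 = 0$), which is all that is needed. Proposition \ref{prop: bound exponent general} then yields, for every $N$ with $pN \in \bbN$,
\[
\uR\bigl(T_1^{\boxtimes pN} \boxtimes T_2^{\boxtimes (1-p)N}\bigr) \leq \Bigl(\tfrac{r}{2^{h(p)+o(1)}}\Bigr)^{N}.
\]

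Next I would identify the tensor on the left via the self-reproducing property of graph tensors: $T_1^{\boxtimes pN}$ is the spider with legs $2^{pN}, 2^{pN}, (a+2)^{pN}$ and $T_2^{\boxtimes (1-p)N}$ is the matrix of size $a^{(1-p)N}$, hence
\[
T_1^{\boxtimes pN} \boxtimes T_2^{\boxtimes (1-p)N} = \EMamu\bigl(2^{pN}, 2^{pN}, a^{(1-p)N}; (a+2)^{pN}\bigr) = \EMamu\bigl(2, 2, a^{\frac{1-p}{p}}; a+2\bigr)^{\boxtimes pN},
\]
the last equality being the self-reproducing property read with the fractional-weight convention $a^{\frac{1-p}{p} \cdot pN} = a^{(1-p)N}$. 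Writing $M = pN$ and $U = \EMamu(2,2,a^{\frac{1-p}{p}};a+2)$, the bound above becomes $\uR(U^{\boxtimes M}) \leq r^{M/p}\, 2^{-(h(p)+o(1))M/p}$; taking $M$-th roots and letting $M \to \infty$ gives $\aR(U) \leq (r \cdot 2^{-h(p)})^{1/p}$, and taking logarithms yields the claimed bound $\omega(U) \leq \tfrac{1}{p}\bigl[\log(4(a+2)+1) - h(p)\bigr]$.

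The only point requiring care is the interplay between the integrality constraint $pN \in \bbN$ in Proposition \ref{prop: bound exponent general}, the fractional weight $a^{\frac{1-p}{p}}$, and the reindexing $M = pN$; all of this is handled by the asymptotic-statement convention introduced just before Proposition \ref{prop: bound exponent general} (reading $\aR(U^{\boxtimes q}) \leq \rho$ as $\uR(U^{\boxtimes Nq}) \leq \rho^{N+o(1)}$ whenever $Nq \in \bbN$), together with semicontinuity of border rank to round non-integer local dimensions down if one wishes to stay with genuine tensors. Beyond this bookkeeping I do not expect any real obstacle: the substance of the argument is entirely contained in Theorem \ref{thm: absorb big leg} and Proposition \ref{prop: bound exponent general}.
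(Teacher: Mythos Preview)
Your proposal is correct and follows essentially the same route as the paper's proof: invoke Theorem~\ref{thm: absorb big leg} to obtain $\uR(T_1 \oplus T_2) \leq 4(a+2)+1$, feed this into Proposition~\ref{prop: bound exponent general}, identify $T_1^{\boxtimes p} \boxtimes T_2^{\boxtimes(1-p)}$ via the self-reproducing property of graph tensors, and then take the Kronecker power with exponent~$1/p$. Your handling of the boundary cases $a\in\{0,1\}$ and of the integrality bookkeeping is in fact more explicit than the paper's own argument (though the ``respectively'' in your parenthetical is reversed: it is $a=0$ for which $T_2=0$, and $a=1$ that is covered by subadditivity).
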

\begin{proof}
By Theorem \ref{thm: absorb big leg}, for every $a \geq 2$, we have $\uR(T_1(a+2) \oplus T_2(a)) = 4(a+2)+1$.

For every $p \in (0,1)$, we have 
\[
T_1(a+2)^{\boxtimes p} \boxtimes T_2(a+2)^{\boxtimes(1- p)} =  \EMamu( 2^{p} , 2^{p} , a^{(1-p)} ; (a+2)^{p}) .
\]
 Therefore Proposition \ref{prop: bound exponent general} provides the upper bound
\[
 \omega( \EMamu( 2^{p} , 2^{p} , a^{(1-p)} ; (a+2)^{p}) ) \leq \log (4(a+2)+1) - h(p).
\]
Considering the Kronecker power with exponent $1/p$ on the left-hand side, we obtain the desired upper bound.
\end{proof}
Now, for every $n_3,n_4 \geq 2$, define 
\[
 a := a(n_3,n_4) = n_4 - 2  \qquad p := p(n_3,n_4) = \frac{\log(n_4-2)}{\log(n_3) + \log(n_4-2)}
\]
so that $n_3 = a^{\frac{1-p}{p}}$ and $n_4 = a+2$. Let $\omega_{\text{Sch}}(n_3,n_4) =  \frac{1}{p} \left[ \log (4(a+2)+1) - h(p) \right]$ be the upper bound of Theorem \ref{thm: omega ext mamu}, and let $\omega_{\text{triv}} = 2 + \log(n_3) + \log(n_4)$ be the trivial upper bound from \eqref{eqn: trivial asy rank}. We compare the two bounds in Figure \ref{figure: bound ext mamu} for $n_3 = 2 \vvirg 100$ and $n_4 = 4 \vvirg 100$. In particular, we observe that for $n_4 \gg n_3$, the upper bound from Theorem \ref{thm: omega ext mamu} obtained via the non-additivity construction is stronger than the trivial one.

We point out that one can obtain an upper bound on the exponent of the extended matrix multiplication tensor from upper bounds on the exponent of matrix multiplication. Indeed
\[
 \omega(\EMamu(n_1,n_2,n_3;n_4)) \leq \log(n_4) + \omega(\Mamu(n_1,n_2,n_3)).
\]
Applying the best known upper bounds on $\omega(\Mamu(n_1,n_2,n_3))$, one obtains stronger bounds on $ \omega(\EMamu(n_1,n_2,n_3;n_4))$ than the one of Theorem \ref{thm: omega ext mamu}. However, the method followed in this section is much simpler than the methods used to obtain upper bounds on $\omega(\Mamu(n_1,n_2,n_3))$ and yet it delivers nontrivial bounds in a wide range, as one can observe in Figure \ref{figure: bound ext mamu}.
 
\begin{figure}[!htp]
\includegraphics[width =.6\textwidth]{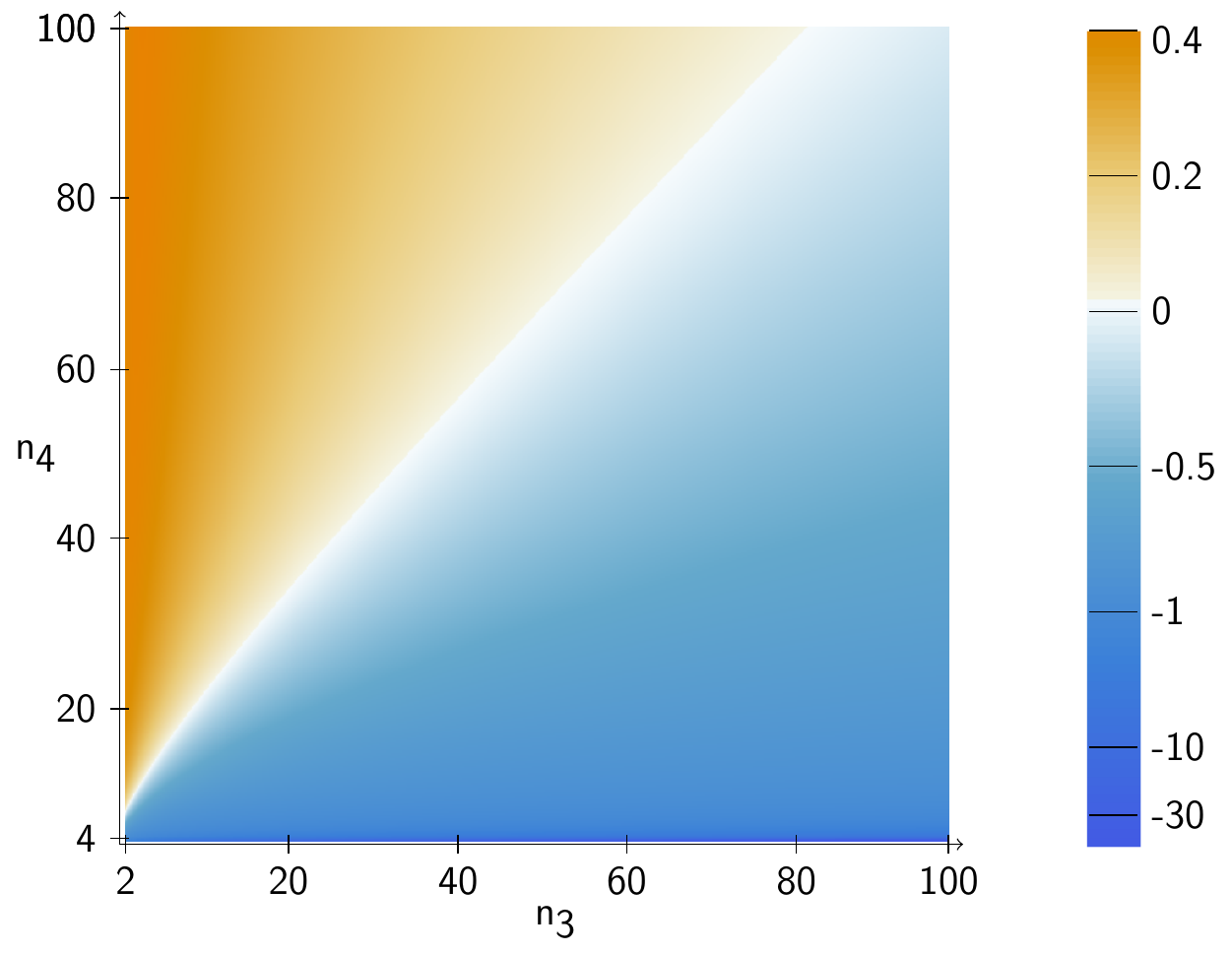}
\caption{Density graph of $\omega_{\triv} - \omega_{\text{Sch}}$ as a function of $n_3$ and $n_4$. The blue region corresponds to negative values (i.e., $\omega_{\triv} < \omega_{\text{Sch}}$); the orange region corresponds to positive values (i.e., $\omega_{\triv} > \omega_{\text{Sch}}$). Darker shades correspond to more extreme values.} \label{figure: bound ext mamu}
\end{figure}

\subsection{Multi-extended matrix multiplication}

We use the result of Theorem \ref{thm: matrix on many legs} to obtain an upper bound on the exponent of the tensor
\[
\multiEMamu(d;n,N) = \qquad \begin{minipage}{.3\textwidth}
        \begin{tikzpicture}[scale=1]
\draw (0,1.5)-- (0,0);
\draw (0,0)-- (1.5,0);
\draw[fill=black] (1.5,0) circle (.1cm);
\draw[fill=black] (0,1.5) circle (.1cm);
\draw[fill=black] (0,0) circle (.1cm);
\draw (.75,0) node[anchor=north] {$n$};
\draw (0,.75) node[anchor=east] {$n$};
\draw (0,-.75) node[anchor=west] {$n$};
\draw (1.2,1) node[anchor=east]{$N$};
\draw (0,0)-- (-1.5,0);
\draw (0,1.5)-- (1.5,0);
\draw (0,0)-- (0,-1.5);
\draw (0,0)-- (-1.47115  ,-0.2926);
\draw (0,0)-- (-1.3858 , -0.5740);
\draw (0,0)-- (-0.2926, -1.47115);
\draw (0,0)-- (-0.5740, -1.3858);
\draw[fill=black] (-1.5,0) circle (.08cm);
\draw[fill=black] (0,-1.5) circle (.08cm);
\draw[fill=black] (-1.47115  ,-0.2926) circle (.08cm);
\draw[fill=black] (-0.2926,-1.47115  ) circle (.08cm);
\draw[fill=black] (-1.3858 , -0.5740) circle (.08cm);
\draw[fill=black] ( -0.5740,-1.3858) circle (.08cm);
\draw (-1,-1) node[anchor= south west] {$\ddots$};
\end{tikzpicture}
\end{minipage}
\]
where the central vertex has degree $d$.

The tensor $\multiEMamu(d;n,N)$ can be realized as Kronecker product of the tensors $T_1$ and $T_2$ of Construction 3; indeed 
\begin{equation}
   \multiEMamu(d;n,N) \quad = \qquad \begin{minipage}{.3\textwidth}
       \begin{tikzpicture}[scale=1]
\draw (0,1.5)-- (0,0);
\draw (0,0)-- (1.5,0);
\draw[fill=black] (1.5,0) circle (.1cm);
\draw[fill=black] (0,1.5) circle (.1cm);
\draw[fill=black] (0,0) circle (.1cm);
\draw (.75,0) node[anchor=north] {$n$};
\draw (0,.75) node[anchor=east] {$n$};
\draw (0,-.75) node[anchor=west] {$n$};
\draw (0,0)-- (-1.5,0);
\draw (0,0)-- (0,-1.5);
\draw (0,0)-- (-1.47115  ,-0.2926);
\draw (0,0)-- (-1.3858 , -0.5740);
\draw (0,0)-- (-0.2926, -1.47115);
\draw (0,0)-- (-0.5740, -1.3858);
\draw[fill=black] (-1.5,0) circle (.08cm);
\draw[fill=black] (0,-1.5) circle (.08cm);
\draw[fill=black] (-1.47115  ,-0.2926) circle (.08cm);
\draw[fill=black] (-0.2926,-1.47115  ) circle (.08cm);
\draw[fill=black] (-1.3858 , -0.5740) circle (.08cm);
\draw[fill=black] ( -0.5740,-1.3858) circle (.08cm);
\draw (-1,-1) node[anchor= south west] {$\ddots$};
\end{tikzpicture}
      \end{minipage}
      \hspace{-1cm} 
      \boxtimes 
      \qquad
 \begin{minipage}{.3\textwidth}
       \begin{tikzpicture}[scale=1]
\draw (0,1.5)-- (1.5,0);
\draw[fill=black] (1.5,0) circle (.1cm);
\draw[fill=black] (0,1.5) circle (.1cm);
\draw[fill=black] (0,0) circle (.1cm);
\draw (.7,.7) node[anchor=south west] {$N$};
\draw[fill=black] (-1.5,0) circle (.08cm);
\draw[fill=black] (0,-1.5) circle (.08cm);
\draw[fill=black] (-1.47115  ,-0.2926) circle (.08cm);
\draw[fill=black] (-0.2926,-1.47115  ) circle (.08cm);
\draw[fill=black] (-1.3858 , -0.5740) circle (.08cm);
\draw[fill=black] ( -0.5740,-1.3858) circle (.08cm);
\draw (-1.3,-1.3) node[anchor= south west] {$\ddots$};
\end{tikzpicture}
      \end{minipage}
\end{equation}
Write $T_1(n)$, $T_2(N)$ for the two tensors above: in particular 
\[
\multiEMamu(d;n,N) = T_1(n) \boxtimes T_2(N) .
\]
We are going to use the result of Theorem \ref{thm: matrix on many legs} to obtain an upper bound on the exponent $\omega(\multiEMamu(d; n,N))$.

\begin{theorem}\label{thm: omega multiEmamu}
Let $n \geq 0$, $d \geq 3$, $p \in (0,1)$. Then
\[
 \omega( \multiEMamu(d;n,n^{d\frac{1-p}{p}}) ) \leq \frac{1}{p} [\log(n^d + 2n^{d-1} + n^2(n+1)^{d-3} + 1) - h(p)].
\]
\end{theorem}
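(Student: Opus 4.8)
The plan is to follow the same recipe as the proof of Theorem~\ref{thm: omega ext mamu}: feed the border rank bound for the direct sum coming from Construction~3 into Proposition~\ref{prop: bound exponent general}, identify the resulting tensor as a $\multiEMamu$ tensor, and then rescale by a Kronecker power of exponent $1/p$.

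First I would fix $N = n^d$ and let $T_1(n)$, $T_2(N)$ be the two tensors of Construction~3 with all leg weights equal to $n$, so that $T_1(n) \boxtimes T_2(N) = \multiEMamu(d;n,N)$. Theorem~\ref{thm: matrix on many legs} then gives $\uR(T_1(n) \oplus T_2(n^d)) \leq r$ with $r = n^d + 2n^{d-1} + n^2(n+1)^{d-3} + 1$.

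Next I would apply Proposition~\ref{prop: bound exponent general} (in its asymptotic form \eqref{eqn: bound exponent general}) to the pair $T_1(n), T_2(n^d)$. Using the self-reproducing property of graph tensors, extended to fractional exponents as in the discussion preceding Proposition~\ref{prop: bound exponent general}, one has $T_1(n)^{\boxtimes p} = T_1(n^p)$ and $T_2(n^d)^{\boxtimes(1-p)} = T_2(n^{d(1-p)})$, hence their Kronecker product equals $\multiEMamu(d; n^p, n^{d(1-p)})$. The proposition yields
\[
\omega\left(\multiEMamu(d; n^p, n^{d(1-p)})\right) \leq \log(r) - h(p).
\]
Finally I would take the Kronecker power with exponent $1/p$: by self-reproducing, $\bigl(\multiEMamu(d; n^p, n^{d(1-p)})\bigr)^{\boxtimes 1/p} = \multiEMamu(d; n, n^{d(1-p)/p})$, and since $\omega(S^{\boxtimes q}) = q\,\omega(S)$ this multiplies the bound by $1/p$, producing
\[
\omega\left(\multiEMamu\bigl(d; n, n^{d\frac{1-p}{p}}\bigr)\right) \leq \frac{1}{p}\bigl[\log(r) - h(p)\bigr],
\]
which is the desired statement.

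I do not expect a substantive obstacle: the result is a direct corollary of Theorem~\ref{thm: matrix on many legs} and Proposition~\ref{prop: bound exponent general}, exactly mirroring Theorem~\ref{thm: omega ext mamu}. The only points needing care are the bookkeeping of fractional Kronecker exponents — each weight written as a non-integer power of $n$ must be read, per the convention stated before Proposition~\ref{prop: bound exponent general}, as an asymptotic statement along those integers $M$ for which $pM$ (respectively $M/p$) and all the induced local dimensions are integral, the exponent being insensitive to passing to such subsequences — and the trivial edge cases $n \in \{0,1\}$.
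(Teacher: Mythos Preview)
Your proposal is correct and follows exactly the paper's own proof: invoke Theorem~\ref{thm: matrix on many legs} to get the border rank bound $r$ on $T_1(n)\oplus T_2(n^d)$, identify $T_1(n)^{\boxtimes p}\boxtimes T_2(n^d)^{\boxtimes(1-p)}$ with $\multiEMamu(d;n^p,n^{d(1-p)})$, apply Proposition~\ref{prop: bound exponent general} (equivalently \eqref{eqn: bound exponent general}), and rescale by the Kronecker power $1/p$. Your additional remarks on the fractional-exponent convention and the trivial cases $n\in\{0,1\}$ are appropriate but go slightly beyond what the paper spells out.
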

\begin{proof}
 By Theorem \ref{thm: matrix on many legs}, for every $d \geq 3$ and every $n$, we have the upper bound
 \[
T_1(n) \oplus T_2(n^d) \leq  n^d + 2n^{d-1} + n^2(n+1)^{d-3} + 1.
 \]
 For every $p \in (0,1)$, 
 \[
  T_1(n)^{\boxtimes p} \boxtimes T_2(n^d)^{\boxtimes(1- p)} =  \multiEMamu( d; n^{p},n^{d(1-p)}).
 \]
Therefore Proposition \ref{prop: bound exponent general} provides the upper bound
\[
 \multiEMamu( d; n^{p},n^{d(1-p)}) \leq \log \left[n^d + 2n^{d-1} + n^2(n+1)^{d-3} + 1 \right] - h(p).
\]
Considering the Kronecker power with exponent $1/p$ on the left-hand side, we obtain the desired upper bound.
\end{proof}

The trivial upper bound from \eqref{eqn: trivial asy rank} has the form 
\[
 \omega( \multiEMamu(d;n,n^{d\frac{(1-p)}{p}})) \leq d \log(n) (1 + \textfrac{1-p}{p}).
\]
In the case $p=1/2$, for fixed $d$ and $n$ large, the bound in Theorem \ref{thm: omega multiEmamu} is approximately $2 d \log (n) - 2$, providing a saving of 2 as compared to the trivial upper bound. Note, that this is far away from the lower bound obtained from the flattening lower bound on the asymptotic rank, which is $(2 d -1) \log (n)$.

Let $\omega_{\mathrm{Sch}} ( d,n,p) = \frac{1}{p} (\log (n^d + 2n^{d-1} + n^2(n+1)^{d-3} + 1 )-h(p))$ be the upper bound from Theorem \ref{thm: omega multiEmamu} and let $\omega_{\triv} ( d,n,p) =  d \log(n) (1 + \frac{1-p}{p})$ be the trivial upper bound from \eqref{eqn: trivial asy rank}. 

For $d = 4$, we compare the two upper bounds in Figure \ref{figure: multiemamu d equal 4} for $n = 4 \vvirg 100$ and $p \in (\frac{1}{2},1)$. The new upper bound is nontrivial unless $p$ is close to $1$ as $n$ grows. In particular, we obtain a nontrivial bound for every value of $p < 1/2$.

For $p = \frac{d}{1+d}$, we have $n^{d\frac{(1-p)}{p}} = n$. In Figure \ref{figure: multiemamu p giving n}, we compare the two upper bounds for this value of $p$, for $n = 4 \vvirg 100$ and $d = 3 \vvirg 15$. For fixed $d$, we observe that the new upper bound is nontrivial for $n$ sufficiently large.

Note also that for fixed $d$ and large $n$, the upper bound is approximately  $(d+1) \log(n) - \log (1 + 1/d) \approx (d+1) \log (n) - 1/(d \cdot \mathrm{ln}(2))$, which is strictly lower than the trivial upper bound $(d+1) \log (n)$. However, it is not better than the bound obtained when using the best bounds on the exponent of matrix multiplication, which gives $d-2 + \omega \log (n)$, where $\omega$ is the matrix multiplication exponent.  If $\omega=2$, this matches the trivial flattening lower bound $d \log( n)$.

\begin{figure}[!htp]
\includegraphics[width =.6\textwidth]{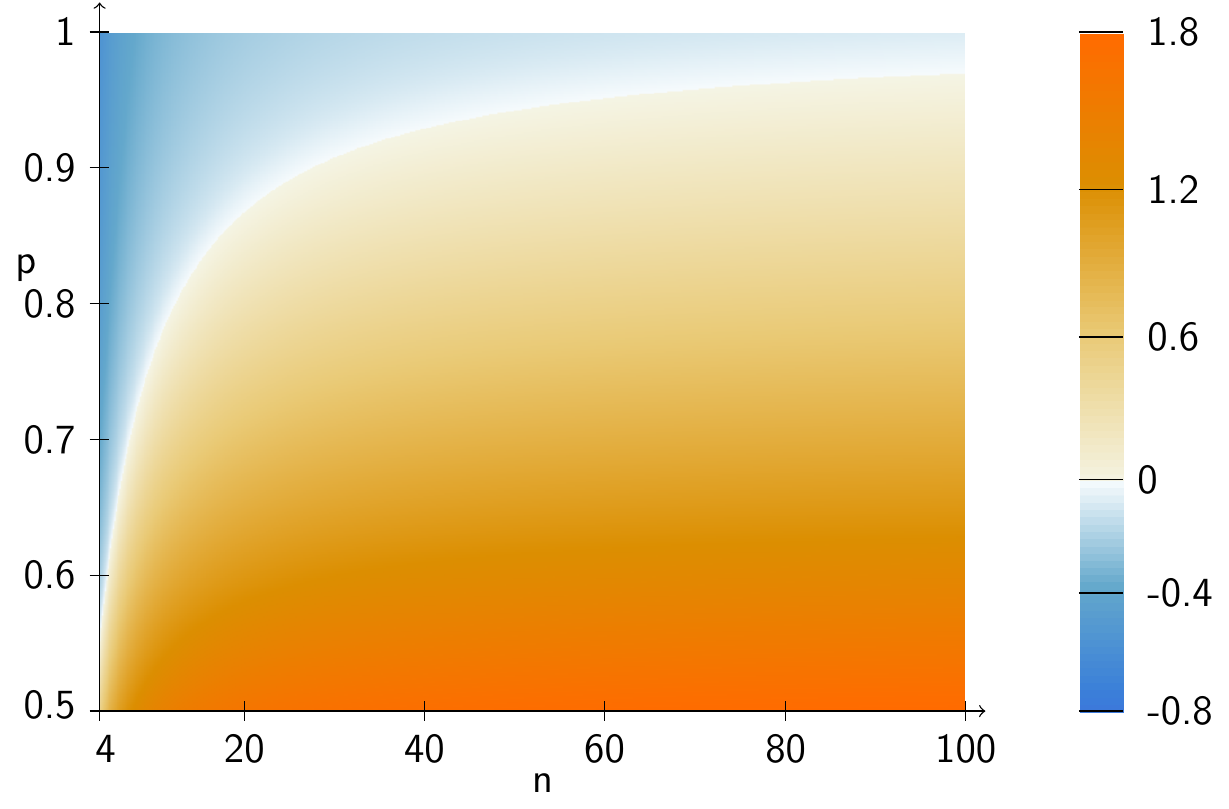}
\caption{Density graph of $\omega_{\triv} - \omega_{\text{Sch}}$ for $d = 4$ as a function of $n$ and $p$. The blue region corresponds to negative values (i.e., $\omega_{\triv} < \omega_{\text{Sch}}$); the orange region corresponds to positive values (i.e., $\omega_{\triv} > \omega_{\text{Sch}}$). Darker shades correspond to more extreme values.} \label{figure: multiemamu d equal 4}
\end{figure}
\begin{figure}[!htp]
\includegraphics[width =.6\textwidth]{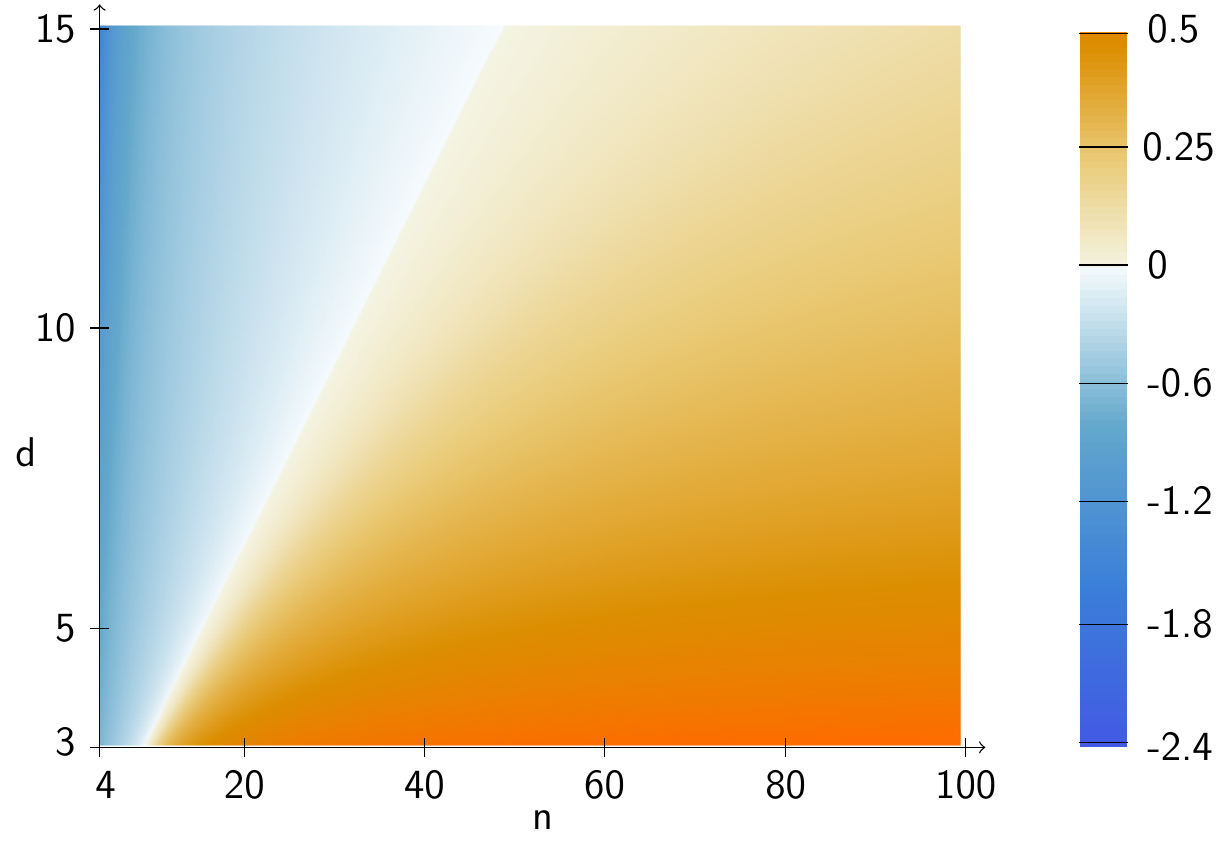}
\caption{Density graph of $\omega_{\triv} - \omega_{\text{Sch}}$ for $p = \frac{d}{d+1}$ as a function of $n$ and $d$. The blue region corresponds to negative values (i.e., $\omega_{\triv} < \omega_{\text{Sch}}$); the orange region corresponds to positive values (i.e., $\omega_{\triv} > \omega_{\text{Sch}}$). Darker shades correspond to more extreme values.} \label{figure: multiemamu p giving n}
\end{figure}

\subsection{Dome tensor}

We use the result of Theorem \ref{thm: absorbing GHZ} to obtain an upper bound on the exponent of the tensor
\[
\Dome(n_1,n_2,n_3;M) \quad = \qquad \begin{minipage}{.3\textwidth}
       \begin{tikzpicture}[scale=1]
 \draw [rounded corners=2mm,fill=gray!50] (1.8,-.07)--(-.07,1.8)--(-1.3,-1.3)--cycle;
 \draw [rounded corners=2mm,fill=white] (1.4,-.04)--(-.04,1.4)--(-0.9,-0.9)--cycle;
\draw[fill=black] (1.5,0) circle (.1cm);
\draw[fill=black] (0,1.5) circle (.1cm);
\draw[fill=black] (0,0) circle (.1cm);
\draw[fill=black] (-1,-1) circle (.1cm);
\draw (1.7,0) node[anchor=west] {$n_1$};
\draw (0,1.7) node[anchor=south] {$n_2$};
\draw (-1,-1) node[anchor= north west ] {$n_3$};
\draw (0,1.5)-- (0,0);
\draw (0,0)-- (1.5,0);
\draw (0,0)-- (-1,-1);
\draw (0.7,-0.8) node[anchor=west] {$M$};
\end{tikzpicture}
\end{minipage}
\]
Following \cite{ChrZui:TensorSurgery}, we call this tensor \emph{dome tensor}. The upper bound from \eqref{eqn: trivial asy rank} provides
\[
 \omega(\Dome(n_1,n_2,n_3; M)) \leq \log(n_1) + \log(n_2) + \log(n_3) + \log(M).
\]
The dome tensor $\Dome(n_1+1,n_2+1,n_3+1;M)$ can be realized as Kronecker product of the tensors $T_1$ and $T_2$ of Construction 4; indeed
\[
\Dome(n_1+1,n_2+1,n_3+1;M) \quad = \qquad \begin{minipage}{.3\textwidth}
       \begin{tikzpicture}[scale=1]
\draw[fill=black] (1.5,0) circle (.1cm);
\draw[fill=black] (0,1.5) circle (.1cm);
\draw[fill=black] (0,0) circle (.1cm);
\draw[fill=black] (-1,-1) circle (.1cm);
\draw (1.7,0) node[anchor=west] {$n_1+1$};
\draw (0,1.7) node[anchor=south] {$n_2+1$};
\draw (-1,-1) node[anchor= north west ] {$n_3+1$};
\draw (0,1.5)-- (0,0);
\draw (0,0)-- (1.5,0);
\draw (0,0)-- (-1,-1);
\end{tikzpicture}
\end{minipage}
\boxtimes
\begin{minipage}{.3\textwidth}
       \begin{tikzpicture}[scale=1]
 \draw [rounded corners=2mm,fill=gray!50] (1.8,-.07)--(-.07,1.8)--(-1.3,-1.3)--cycle;
 \draw [rounded corners=2mm,fill=white] (1.4,-.04)--(-.04,1.4)--(-0.9,-0.9)--cycle;
\draw[fill=black] (1.5,0) circle (.1cm);
\draw[fill=black] (0,1.5) circle (.1cm);
\draw[fill=black] (0,0) circle (.1cm);
\draw[fill=black] (-1,-1) circle (.1cm);
\draw (0.7,-0.8) node[anchor=west] {$M$};
\end{tikzpicture}
\end{minipage}
\]
Fix $n_1=n_2=n_3 = n$; let $T_1(n+1)$ and $T_2(M)$ be the two tensors above, and write $\Dome(n+1;M) := \Dome(n+1,n+1,n+1;M)$; moreover, restrict to the case where $n$ is even, so that Lemma \ref{lemma:best for four factors} holds. We have 
\[
 \Dome(n+1,M) = T_1(n+1) \boxtimes T_2(M).
\]
For $M = \frac{1}{4}n^3$, we are going to use the result of Theorem \ref{thm: absorbing GHZ} to obtain an upper bound on the exponent $\omega(\Dome(n+1,M))$. 

\begin{theorem}\label{thm: upper bound dome}
Let $n \geq 2$ be even, let $p\in (0,1)$. Let $M = \frac{1}{4}n^3$. Then
\begin{equation*}
 \omega \bigl(  \Dome((n+1)^{p}; M^{(1-p)})  \bigr) \leq  \log((n+1)^3+1) - h(p) .
\end{equation*}
\end{theorem}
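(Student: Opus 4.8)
The plan is to mimic the proofs of Theorem~\ref{thm: omega ext mamu} and Theorem~\ref{thm: omega multiEmamu}, combining the exact border rank computation for the direct sum provided by Theorem~\ref{thm: absorbing GHZ} with the entropy bound of Proposition~\ref{prop: bound exponent general}.

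First I would fix $n_1 = n_2 = n_3 = n$ and use that, since $n$ is even, Lemma~\ref{lemma:best for four factors} gives $M(n,n,n) = \frac{1}{4}n^3 = M$. Hence Theorem~\ref{thm: absorbing GHZ} applies verbatim and yields $\uR(T_1(n+1) \oplus T_2(M)) = (n+1)^3 + 1$; this is the strictly subadditive upper bound that drives the argument.

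Next I would record the Kronecker-product identity $T_1(n+1)^{\boxtimes p} \boxtimes T_2(M)^{\boxtimes(1-p)} = \Dome((n+1)^{p}; M^{(1-p)})$, read in the fractional-exponent convention introduced just before \eqref{eqn: bound exponent general}. This holds because $T_1(n+1)$ is a graph (spider) tensor, so by the self-reproducing property recalled in Section~\ref{section: preliminaries} its Kronecker powers raise each leg weight to the corresponding power, while $T_2(M) = \bfu_3(M) \otimes e_0$, so $T_2(M)^{\boxtimes N} = \bfu_3(M^N) \otimes e_0$ by multiplicativity of unit tensors under $\boxtimes$; their Kronecker product is exactly the dome tensor with legs $(n+1)^{p}$ and hyperedge weight $M^{(1-p)}$, consistently with the displayed identity $\Dome(n+1, M) = T_1(n+1) \boxtimes T_2(M)$ recorded above.

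Finally I would apply Proposition~\ref{prop: bound exponent general} (equivalently \eqref{eqn: bound exponent general}) with $r = (n+1)^3 + 1$, $T_1 = T_1(n+1)$ and $T_2 = T_2(M)$, obtaining $\omega(\Dome((n+1)^{p}; M^{(1-p)})) \leq \log((n+1)^3 + 1) - h(p)$, which is the claim. The only step requiring care is the bookkeeping of the fractional-exponent convention for hypergraph tensors, namely checking that a $(1-p)$-th Kronecker power of $T_2$ alters only the hyperedge weight, changing it to $M^{(1-p)}$, and leaves the one-dimensional dangling factor intact; this is routine given the self-reproducing property, so I do not foresee a genuine obstacle, as the theorem is essentially a corollary of Theorem~\ref{thm: absorbing GHZ} and Proposition~\ref{prop: bound exponent general}.
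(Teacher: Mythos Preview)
Your proposal is correct and follows essentially the same route as the paper: invoke Theorem~\ref{thm: absorbing GHZ} (with Lemma~\ref{lemma:best for four factors} giving $M(n,n,n)=\tfrac14 n^3$) to obtain $\uR(T_1(n+1)\oplus T_2(M))=(n+1)^3+1$, identify $T_1(n+1)^{\boxtimes p}\boxtimes T_2(M)^{\boxtimes(1-p)}$ with the dome tensor via the self-reproducing property, and conclude by Proposition~\ref{prop: bound exponent general}. The paper's own proof is in fact terser than yours and omits the bookkeeping you spell out for the fractional Kronecker powers.
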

\begin{proof}
By Theorem \ref{thm: absorbing GHZ}, we have the upper bound $\uR(T_1(n+1) \oplus T_2(M)) = (n+1)^3 + 1$. 

For every $p \in (0,1)$, we have 
\[
 T_1(n+1)^{\boxtimes p} \oplus T_2(M)^{\boxtimes (1-p)} = \Dome ( (n+1)^p, M^{1-p}).
\]
Therefore, Proposition \ref{prop: bound exponent general}, provides the desired upper bound.
 \end{proof}

The trivial upper bound from \eqref{eqn: trivial asy rank} has the form
\begin{equation}\label{eqn: trivial upper bound dome}
\begin{aligned}
 \omega \bigl( \Dome((n+1)^{p}; M^{(1-p)}) \bigr) \leq &3p \log(n+1) + (1-p) \log(M) \\ 
=& 3p \log(n+1) + 3(1-p) \log(n) - 2(1-p),
\end{aligned}
\end{equation}
where we use $M = \frac{1}{4}n^3$.

Let $\omega_{\text{Sch}}(n,p) = \log((n+1)^3+1) - h(p)$ and $\omega_{\text{triv}}(n,p) = 3p \log(n+1) + 3(1-p) \log(n) - 2(1-p)$ be the upper bound from Theorem \ref{thm: upper bound dome} and the trivial upper bound from \eqref{eqn: trivial asy rank}, respectively. We compare the two upper bounds in Figure \ref{figure: exponent dome} for $n = 2 \vvirg 50$ and $p \in (0,1)$. In particular, we observe that for $n$ sufficiently large and $p>1/2$, the upper bound of Theorem \ref{thm: upper bound dome} obtained via the non-additivity construction is stronger than the trivial one. In \cite[Section 4.1]{ChrZui:TensorSurgery}, strong upper bounds on the exponent of some instances of $\omega(\Dome(n,n,n;N))$ are provided, but, this result relies on more advanced methods. On the other hand, the method presented here is extremely simple, and it already provides nontrivial bounds on the exponent, as shown in Figure \ref{figure: exponent dome}.

\begin{figure}[!htp]
 \includegraphics[width =.6\textwidth]{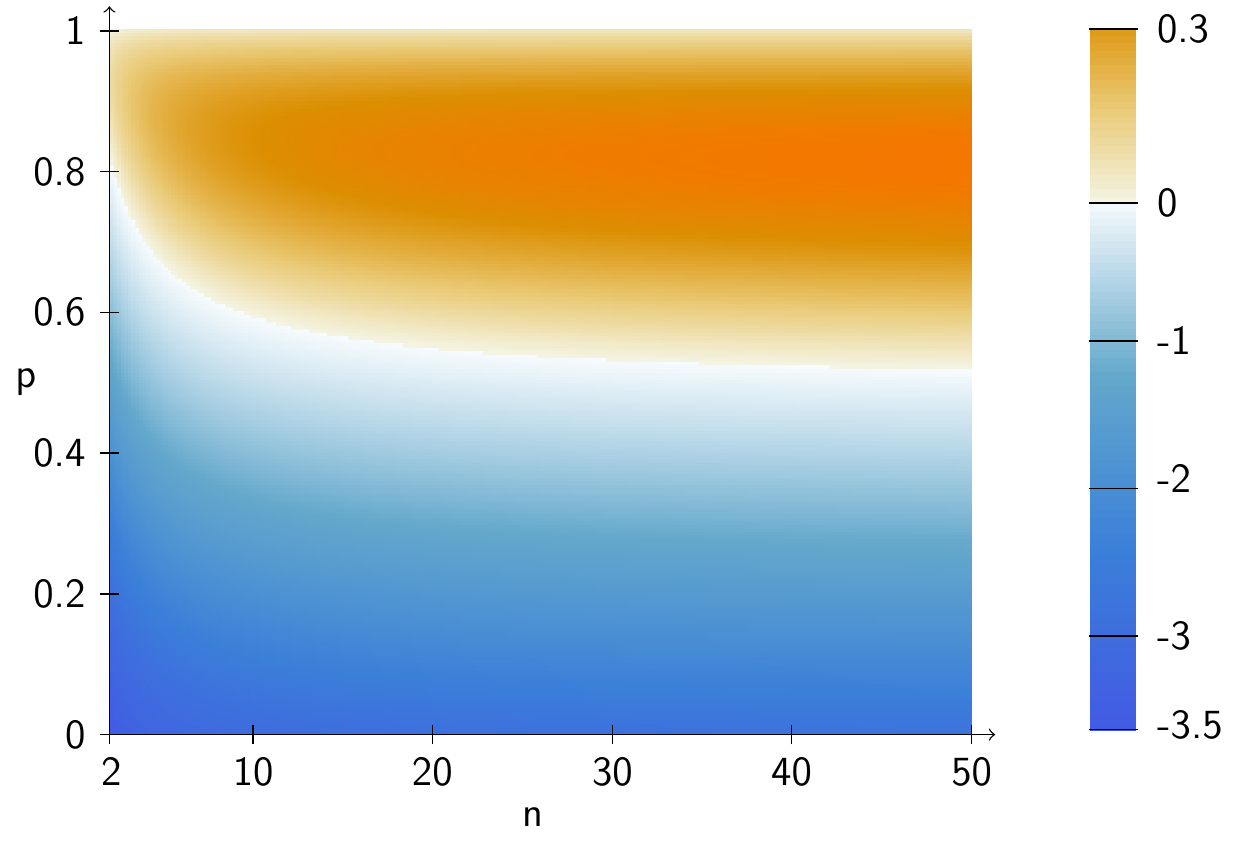}
 \caption{Density graph of $\omega_{\triv} - \omega_{\text{Sch}}$ as a function of $n$ and $p$. The blue region corresponds to negative values (i.e., $\omega_{\triv} < \omega_{\text{Sch}}$); the orange region corresponds to positive values (i.e., $\omega_{\triv} > \omega_{\text{Sch}}$). Darker shades correspond to more extreme values.} \label{figure: exponent dome}
\end{figure}

{\small
\bibliographystyle{plain}
\bibliography{bibAddit}

\begin{thebibliography}{10}

\bibitem{AlmWil:RefinedLaserMethodFMM}
J.~Alman and V.~V. Williams.
\newblock {A Refined Laser Method and Faster Matrix Multiplication}.
\newblock {\em arXiv:2010.05846}, 2020.

\bibitem{Bini:RelationsExactApproxBilAlg}
D.~Bini.
\newblock {Relations between exact and approximate bilinear algorithms.
  {A}pplications}.
\newblock {\em Calcolo}, 17(1):87--97, 1980.

\bibitem{BlaserNotes}
M.~Bl{\"a}ser.
\newblock {Fast Matrix Multiplication}.
\newblock {\em Theory of Computing, Graduate Surveys}, 5:1--60, 2013.

\bibitem{BucLan:RanksTensorsAndGeneralization}
J.~Buczy{\'n}ski and J.~M. Landsberg.
\newblock {Ranks of tensors and a generalization of secant varieties}.
\newblock {\em Lin. Alg. Appl.}, 438(2):668--689, 2013.

\bibitem{BuClSho:Alg_compl_theory}
P.~B{\"u}rgisser, M.~Clausen, and M.~A. Shokrollahi.
\newblock {\em {Algebraic complexity theory}}, volume 315 of {\em {Grundlehren
  der Mathematischen Wissenschaften}}.
\newblock Springer-Verlag, Berlin, 1997.

\bibitem{CarCatChi:ProgressSymStrassen}
E.~Carlini, M.~V. Catalisano, and L.~Chiantini.
\newblock {Progress on the symmetric Strassen conjecture}.
\newblock {\em J. Pure and Appl. Alg.}, 219(8):3149--3157, 2015.

\bibitem{ChrVraZui:UniversalPtsAsySpecTensors}
M.~Christandl, P.~Vrana, and J.~Zuiddam.
\newblock {Universal points in the asymptotic spectrum of tensors}.
\newblock In {\em {Proc. 50th ACM SIGACT Symp. Th. Comp. -- STOC'18}}, pages
  289--296. ACM, 2018.

\bibitem{ChrVraZui:AsyRankGraph}
M.~Christandl, P.~Vrana, and J.~Zuiddam.
\newblock {Asymptotic tensor rank of graph tensors: beyond matrix
  multiplication}.
\newblock {\em Computational Complexity}, 28(1):57--111, 2019.

\bibitem{ChrZui:TensorSurgery}
M.~Christandl and J.~Zuiddam.
\newblock {Tensor surgery and tensor rank}.
\newblock {\em Computational Complexity}, 28(1):27--56, 2019.

\bibitem{CopperWinog:MatrixMultiplicationArithmeticProgressions}
D.~Coppersmith and S.~Winograd.
\newblock {Matrix multiplication via arithmetic progressions}.
\newblock {\em J. Symb. Comput.}, 9(3):251--280, 1990.

\bibitem{DurVidCir:TrheeCubitsEntTwoIneqWays}
W.~D{\"u}r, G.~Vidal, and J.~I. Cirac.
\newblock {Three qubits can be entangled in two inequivalent ways}.
\newblock {\em Phys. Rev. A}, 62(6):062314, 12, 2000.

\bibitem{FeiWin:DirectSumConj}
E.~Feig and S.~Winograd.
\newblock {On the direct sum conjecture}.
\newblock {\em Lin. Alg. Appl.}, 63:193--219, 1984.

\bibitem{GesOneVen:PartiallySymComon}
F.~Gesmundo, A.~Oneto, and E.~Ventura.
\newblock {Partially symmetric variants of Comon's problem via simultaneous
  rank}.
\newblock {\em SIAM J. Mat. Anal. Appl.}, 40(4):1453--1477, 2019.

\bibitem{Hackbusch:TensorBook}
W.~Hackbusch.
\newblock {\em {Tensor spaces and numerical tensor calculus}}, volume~42 of
  {\em {Springer Series in Computational Mathematics}}.
\newblock Springer Science \& Business Media, 2012.

\bibitem{JaTa:ValidDirectSumConjecture}
J.~Ja'Ja' and J.~Takche.
\newblock {On the validity of the direct sum conjecture}.
\newblock {\em SIAM J. Comput.}, 15(4):1004--1020, 1986.

\bibitem{Lan:TensorBook}
J.~M. Landsberg.
\newblock {\em {Tensors: {G}eometry and {A}pplications}}, volume 128 of {\em
  {Graduate Studies in Mathematics}}.
\newblock American Mathematical Society, Providence, RI, 2012.

\bibitem{LanQiYe:GeomTensorNetwork}
J.~M. Landsberg, Y.~Qi, and K.~Ye.
\newblock {On the geometry of {Tensor Network States}}.
\newblock {\em Quantum Inf. Comput.}, 12(3-4):346--254, 2012.

\bibitem{LeGall:PowersTensorsFMM}
F.~{Le Gall}.
\newblock {Powers of tensors and Fast Matrix Multiplication}.
\newblock In {\em {Proc. 39th Int. Symp. Symb. Alg. Comp.}}, pages 296--303.
  ACM, 2014.

\bibitem{PolSze:ProblemsTheoremsAnalysisI}
G.~P{\'o}lya and G.~Szeg{\"o}.
\newblock {\em {Problems and Theorems in Analysis I: Series. Integral Calculus.
  Theory of Functions}}.
\newblock {Classics in Mathematics}. Springer Berlin Heidelberg, 1997.
\newblock Reprint of 1972 ed. Translated by D. Aeppli.

\bibitem{Raz:TensorRankArithmeticFormulas}
R.~Raz.
\newblock {Tensor-rank and lower bounds for arithmetic formulas}.
\newblock In {\em {STOC'10---{P}roc. 2010 ACM {I}nt. {S}ymp. {T}h. {C}omp.}},
  pages 659--666, New York, 2010. ACM.

\bibitem{Schon:PartTotalMaMu}
A.~Sch{\"o}nhage.
\newblock {Partial and total matrix multiplication}.
\newblock {\em SIAM J. Comp.}, 10(3):434--455, 1981.

\bibitem{Shitov:StrassenCounterexample}
Y.~Shitov.
\newblock {Counterexamples to Strassen{\rq}s direct sum conjecture}.
\newblock {\em Acta Mathematica}, 222(2):363--379, 2019.

\bibitem{Stot:ComplexityMaMu}
A.~Stothers.
\newblock {\em {On the Complexity of Matrix Multiplication}}.
\newblock PhD thesis, U. Edinburgh, 2010.

\bibitem{Strassen:Gauss_elimination_not_optimal}
V.~Strassen.
\newblock {Gaussian elimination is not optimal}.
\newblock {\em Numerische mathematik}, 13(4):354--356, 1969.

\bibitem{Str:VermeidungDiv}
V.~Strassen.
\newblock {Vermeidung von {D}ivisionen}.
\newblock {\em J. Reine Angew. Math.}, 264:184--202, 1973.

\bibitem{Strassen:RanksOptimalComputation}
V.~Strassen.
\newblock {Rank and optimal computation of generic tensors}.
\newblock {\em Lin. Alg. Appl.}, 52/53:645--685, 1983.

\bibitem{Str:AsySpectrumTensorsExpMatMult}
V.~Strassen.
\newblock {The asymptotic spectrum of tensors and the exponent of matrix
  multiplication}.
\newblock In {\em {Foundations of Computer Science, 1986., 27th Annual
  Symposium on}}, pages 49--54. IEEE, 1986.

\bibitem{Str:RelativeBilComplMatMult}
V.~Strassen.
\newblock {Relative bilinear complexity and matrix multiplication}.
\newblock {\em J. Reine Angew. Math.}, 375/376:406--443, 1987.

\bibitem{Str:AsySpectrumTensors}
V.~Strassen.
\newblock {The asymptotic spectrum of tensors}.
\newblock {\em J. Reine Angew. Math.}, 384:102--152, 1988.

\bibitem{Str:DegComplBilMapsSomeAsySpec}
V.~Strassen.
\newblock {Degeneration and complexity of bilinear maps: some asymptotic
  spectra}.
\newblock {\em J. Reine Angew. Math.}, 413:127--180, 1991.

\bibitem{Sylv:PrinciplesCalculusForms}
J.~J. Sylvester.
\newblock {On the principles of the calculus of forms}.
\newblock {\em Cambridge and Dublin Math. J.}, pages 52--97, 1852.

\bibitem{Teit:SuffCondStrassen}
Z.~Teitler.
\newblock {Sufficient conditions for Strassen{\rq}s additivity conjecture}.
\newblock {\em Illinois J. Math.}, 59(4):1071--1085, 2015.

\bibitem{Terr:seganti}
A.~Terracini.
\newblock {Sulle $v_k$ per cui la variet{\`a} degli $s_h(h+1)$-seganti ha
  dimensione minore dell'ordinario}.
\newblock {\em Rend. Circ. Mat.}, 31:392--396, 1911.

\bibitem{Wil:FasterThanCW}
V.~V. Williams.
\newblock {Multiplying matrices faster than Coppersmith-Winograd}.
\newblock In {\em {Proc. 44th ACM Symp. Th. Comp. -- STOC'12}}, pages 887--898.
  ACM, 2012.

\bibitem{YuChiGuoDu:TensorRankTripartiteW}
N.~Yu, E.~Chitambar, C.~Guo, and R.~Duan.
\newblock {Tensor rank of the tripartite state $\vert W \rangle^{\otimes n}$ }.
\newblock {\em Physical Review A}, 81(1):014301, 2010.

\bibitem{Zuid:Thesis}
J.~Zuiddam.
\newblock {\em {Algebraic complexity, asymptotic spectra and entanglement
  polytopes}}.
\newblock PhD thesis, U. Amsterdam, 2018.

\end{thebibliography}
}
\end{document}